\documentclass[12pt,leqno]{amsart}
\usepackage[hidelinks]{hyperref}
\usepackage{pifont}
\usepackage{stmaryrd}
\usepackage{dsfont}
\usepackage{color}
\usepackage{mathrsfs}
\usepackage{amsmath}
\usepackage{amssymb}
\usepackage{bookmark}
\usepackage[bottom]{footmisc}
\usepackage{verbatim}
\usepackage{extarrows}
\usepackage{mathtools}
\usepackage{orcidlink}




\oddsidemargin -.1in \evensidemargin -.1in \textwidth 6.5in
\textheight 8.2in
\linespread{1.3}

\numberwithin{equation}{section}
\newtheorem{thm}{Theorem}[section]
\newtheorem{lem}[thm]{Lemma}

\newtheorem{prop}[thm]{Proposition}             
\newtheorem{cor}[thm]{Corollary}
\newtheorem{defi}[thm]{Definition}
\newtheorem{rmk}[thm]{Remark}

\newtheorem{conj}[thm]{Conjecture}

\newcommand{\End}{\operatorname{End}}

\newcommand{\xqz}[1]{\lfloor #1 \rfloor}
\newcommand{\Res}{\operatorname{Res}}

\renewcommand{\mod}{\operatorname{mod}}

\def\Z{\mathbb{Z}}
\def\N{\mathbb{N}}

\def\C{\mathbb{C}}

\def\D{\mathcal{D}}
\def\bm{\bar{m}}
\def\bp{\bar{p}}
\def\bn{\bar{n}}
\def\xm{\xqz{m}}
\def\xp{\xqz{p}}
\def\xn{\xqz{n}}

\allowdisplaybreaks

\begin{document}

\title[Twisted $\phi$-coordinated modules  and Zhu's  correspondence theorem]{Twisted $\phi$-coordinated modules for vertex algebras and Zhu's  correspondence theorem}

\author{Shun Xu~\orcidlink{0009-0006-8080-8107}}

\address{School of Mathematical Sciences, Tongji University, Shanghai, 200092, China}

\email{shunxu@tongji.edu.cn}

\subjclass[2020]{17B69}

\keywords{}

\begin{abstract}
Let $V$ be a vertex algebra and  $g$ be an automorphism of $V$ of order $T$. For any $n, m \in (1/T)\mathbb{N}$, we construct an $\tilde{A}_{g,n}(V)\!-\!\tilde{A}_{g,m}(V)$-bimodule  $\tilde{A}_{g,n,m}(V)$, where $\tilde{A}_{g,n}(V)$ denotes the associative algebra constructed by the authors in \cite{Shun1}. We introduce the notion of $(1/T)\mathbb{N}$-graded $g$-twisted $\phi$-coordinated $V$-modules and  prove that there exists a bijection between the simple $\tilde{A}_{g}(V)$-modules and the irreducible $(1/T)\mathbb{N}$-graded $g$-twisted $\phi$-coordinated $V$-modules, where $\tilde{A}_{g}(V)=\tilde{A}_{g,0}(V)$.  We construct the universal enveloping algebra $U(V[g])$, showing that  $\tilde{A}_{g}(V)$ is
subquotient of $U(V[g])$.
When $V$ is vertex operator algebra, we show that each $\tilde{A}_{g,n,m}(V)$ is isomorphic to the $A_{g,n}(V)-A_{g,m}(V)$-bimodule $A_{g,n,m}(V)$ constructed by Dong and Jiang~\cite{DJ2}. Also we prove that  there exists a bijection between the irreducible admissible $g$-twisted $V$-modules and the irreducible $(1/T)\mathbb{N}$-graded $g$-twisted $\phi$-coordinated $V$-modules.
\end{abstract}

\maketitle
\tableofcontents

\section{Introduction}

 Let $V$ be a vertex operator algebra. Zhu~\cite{Z1} constructed an associative algebra $A(V)$, now known as the \emph{Zhu algebra}, and proved a bijection between irreducible $A(V)$-modules and irreducible admissible $V$-modules. Dong, Li, and Mason extended this framework in a series of influential works. In~\cite{DLM1}, for each $n \in \mathbb{N}$, they introduced an associative algebra $A_n(V)$ satisfying $A_0(V) = A(V)$. In~\cite{DLM2}, they generalized Zhu’s construction to twisted representations: given a finite-order automorphism $g$ of $V$, they defined an associative algebra $A_g(V)$, which coincides with $A(V)$ when $g = \operatorname{id}_V$. Finally, in~\cite{DLM3}, they unified these results by constructing, for an automorphism $g$ of order $T$ and for $n \in (1/T)\mathbb{N}$, an associative algebra $A_{g,n}(V)$ such that $A_{g,0}(V)=A_{g}(V)$ and $A_{\operatorname{id}_V,n}(V)=A_{n}(V)$, and established a bijection between irreducible $A_{g,n}(V)$-modules that do not factor through $A_{g,n-1/T}(V)$ and irreducible admissible $g$-twisted $V$-modules.

In the same work~\cite{DLM3}, for any $A_{g,m}(V)$-module $U$ that does not factor through $A_{g,m-1/T}(V)$, they constructed a Verma-type $g$-twisted admissible $V$-module $\bar{M}_m(U)$. Although $\bar{M}_m(U)$ is naturally $(1/T)\mathbb{N}$-graded, the explicit structure of its homogeneous components remained unclear. To resolve this, Dong and Jiang developed a bimodule theory in~\cite{DJ2}. For any $n, m \in (1/T)\mathbb{N}$, they constructed an $A_{g,n}(V)$--$A_{g,m}(V)$-bimodule $A_{g,n,m}(V)$, satisfying $A_{g,n,n}(V) = A_{g,n}(V)$, thereby extending Zhu’s algebra. Given an $A_{g,m}(V)$-module $U$, they showed that the induced module
$
\bigoplus_{n \in (1/T)\mathbb{N}} A_{g,n,m}(V) \otimes_{A_{g,m}(V)} U
$
is a Verma-type $g$-twisted admissible $V$-module isomorphic to $\bar{M}_m(U)$.

In Lie algebra representation theory, the universal enveloping algebra serves as a fundamental structural and computational tool. Analogously, for a vertex operator algebra $V$ equipped with a finite-order automorphism $g$, there exists a weak analogue of the universal enveloping algebra, denoted $U(V[g])$, such that every weak $g$-twisted $V$-module naturally becomes a $U(V[g])$-module. The algebras $A_{g,n}(V)$ and bimodules $A_{g,n,m}(V)$ are deeply connected to this universal enveloping algebra. Frenkel and Zhu~\cite{FZ1} observed that Zhu’s algebra $A(V)$ can be realized as a certain subquotient of $U(V[1])$. This perspective was later extended: in the untwisted case ($g = \mathrm{id}_V$), it was shown in~\cite{HX1} that each $A_{g,n}(V)$ with $n \in \mathbb{N}$ arises as a subquotient of $U(V[1])$, and~\cite{HX2} generalized this result to arbitrary finite-order automorphisms $g$. Furthermore, the bimodule $A_{1,n,m}(V)$ was also identified as a subquotient of $U(V[1])$ (see~\cite{HJZ1}). Most recently, in collaboration with Han and Xiao~\cite{HXX1}, the author proved that for a general finite-order automorphism $g$, the $A_{g,n}(V)$--$A_{g,m}(V)$-bimodule $A_{g,n,m}(V)$ is likewise a subquotient of the universal enveloping algebra $U(V[g])$.

For parallel developments in the context of vertex operator superalgebras, we refer the reader to~\cite{KW1,DZ1,Shun2,Shun3}.

Li~\cite{L2} constructed a family of associative algebras $\tilde{A}_n(V)$ for a weak quantum vertex algebra $V$ equipped with a constant $\mathcal{S}$-map, where ordinary vertex algebras satisfy these conditions. He introduced the notion of $\mathbb{N}$-graded $\phi$-coordinated $V$-modules and, under suitable assumptions, established a bijection between equivalence classes of irreducible $\mathbb{N}$-graded $\phi$-coordinated $V$-modules and isomorphism classes of irreducible $\tilde{A}_0(V)$-modules.

We now turn to the more general setting of vertex algebras. Let $V$ be a vertex algebra and $g$ an automorphism of $V$ of order $T$. For any $n \in (1/T)\mathbb{N}$, the author constructed in~\cite{Shun1} an associative algebra $\tilde{A}_{g,n}(V)$, which reduces to Li’s $\tilde{A}_n(V)$ when $g = \operatorname{id}_V$. When $V$ is a vertex operator algebra, $\tilde{A}_{g,n}(V)$ is isomorphic to the algebra $A_{g,n}(V)$ introduced by Dong, Li, and Mason in~\cite{DLM3}. Combining this with results from~\cite{DLM3}, the author further proved that the notions of $g$-rationality and $g$-regularity for vertex operator algebras are independent of the choice of conformal vector.

Motivated by Li’s work~\cite{L2}, we introduce the notion of $(1/T)\mathbb{N}$-graded $g$-twisted $\phi$-coordinated $V$-modules and establish a bijection between simple $\tilde{A}_{g }(V)$-modules and irreducible $(1/T)\mathbb{N}$-graded $g$-twisted $\phi$-coordinated $V$-modules, where $\tilde{A}_{g }(V)=\tilde{A}_{g,0 }(V)$. In this process, for any $\tilde{A}_{g }(V)$-module $U$, we construct a Verma-type $(1/T)\mathbb{N}$-graded $g$-twisted $\phi$-coordinated $V$-module $\bar{M}(U)$.

We construct the universal enveloping algebra $U(V[g])$ using the $g$-twisted $\phi$-Jacobi identity, ensuring that every $g$-twisted $\phi$-coordinated $V$-module naturally carries the structure of a $U(V[g])$-module. Moreover, we prove that   the associative algebra $\tilde{A}_{g }(V)$ arises as subquotients of $U(V[g])$.

For any $n, m \in (1/T)\mathbb{N}$, we construct an $\tilde{A}_{g,n}(V)$--$\tilde{A}_{g,m}(V)$-bimodule $\tilde{A}_{g,n,m}(V)$. When $V$ is a vertex operator algebra, we show that each $\tilde{A}_{g,n,m}(V)$ is isomorphic to the $A_{g,n}(V)$--$A_{g,m}(V)$-bimodule $A_{g,n,m}(V)$ constructed by Dong and Jiang~\cite{DJ2}. By combining the Zhu-type correspondence theorem from~\cite{DLM3,DLM2} with the one established in this paper, we further prove a bijection between irreducible admissible $g$-twisted $V$-modules and irreducible $(1/T)\mathbb{N}$-graded $g$-twisted $\phi$-coordinated $V$-modules.

The organization of this paper is as follows.  
In Section~2, we review the basic theory of vertex algebras and $g$-twisted $\phi$-coordinated $V$-modules, establish several preliminary results needed in subsequent sections, and introduce the notion of $(1/T)\mathbb{N}$-graded $g$-twisted $\phi$-coordinated $V$-modules.  
In Section~3, for any $n, m \in (1/T)\mathbb{N}$, we construct the $\tilde{A}_{g,n}(V)$--$\tilde{A}_{g,m}(V)$-bimodule $\tilde{A}_{g,n,m}(V)$.  
In Section~4, we define a functor $\tilde{\Omega}_n$ from the category of $g$-twisted $\phi$-coordinated $V$-modules to the category of $\tilde{A}_{g,n}(V)$-modules.  
In Section~5, we construct the Lie algebra $\hat{V}[g]$, which plays a crucial role in the construction of the functor $\tilde{L} $.  
In Section~6, we construct the functor $\tilde{L} $ from the category of $\tilde{A}_{g }(V)$-modules to the category of $g$-twisted $\phi$-coordinated $V$-modules.   
In Section~7, we construct the universal enveloping algebra $U(V[g])$, showing that every $g$-twisted $\phi$-coordinated $V$-module naturally carries the structure of a $U(V[g])$-module, and establish a precise relationship between   $\tilde{A}_{g }(V)$ and  $U(V[g])$.  
In Section~8, assuming that $V$ is a vertex operator algebra, we prove that each $\tilde{A}_{g,n,m}(V)$ is isomorphic to the bimodule $A_{g,n,m}(V)$ constructed by Dong and Jiang~\cite{DJ2}, and further establish a bijection between irreducible admissible $g$-twisted $V$-modules and irreducible $(1/T)\mathbb{N}$-graded $g$-twisted $\phi$-coordinated $V$-modules.\section{Preliminaries}

In this section, we review the basic theory of vertex algebras and $g$-twisted $\phi$-coordinated $V$-modules, establish several preliminary results required in subsequent sections, and introduce the notion of $(1/T)\mathbb{N}$-graded $g$-twisted $\phi$-coordinated $V$-modules.

\begin{defi}
A \emph{vertex algebra} is a vector space $V$ equipped with a linear map
\begin{align*}
Y(\cdot, x) \colon V &\to (\operatorname{End} V)[[x, x^{-1}]] \\
v &\mapsto Y(v, x) = \sum_{n \in \mathbb{Z}} v_n x^{-n-1},
\end{align*}
and a distinguished vector $\mathbf{1} \in V$, called the \emph{vacuum vector}, satisfying the following axioms for all $u, v \in V$:

{\rm(1)} $Y(\mathbf{1}, z) = \operatorname{id}_V$, and $u_n \mathbf{1} = \delta_{n,-1} u$ for all $n \geq -1$;
    
{\rm(2)} $u_n v = 0$ for all sufficiently large $n$;
    
{\rm(3)} the \emph{Jacobi identity}:
    $$
    \begin{gathered}
    z_0^{-1} \delta\!\left(\frac{z_1 - z_2}{z_0}\right) Y(u, z_1) Y(v, z_2)
    - z_0^{-1} \delta\!\left(\frac{z_2 - z_1}{-z_0}\right) Y(v, z_2) Y(u, z_1) \\
    = z_2^{-1} \delta\!\left(\frac{z_1 - z_0}{z_2}\right) Y(Y(u, z_0)v, z_2).
    \end{gathered}
    $$
\end{defi}

Let $V$ be a vertex algebra. Define a linear operator $\mathcal{D} \colon V \to V$ by
\[
\mathcal{D}(v) = v_{-2}\mathbf{1}
= \lim_{x \to 0} \frac{d}{dx} Y(v, x)\mathbf{1},
\qquad \text{for all } v \in V.
\]
It follows from \cite{LL1} that for any $u, v \in V$, the following identities hold:
\begin{align}
[\mathcal{D}, Y(v, x)] &= Y(\mathcal{D}v, x) = \frac{d}{dx} Y(v, x), \label{eq:translation} \\
Y(u, x)v &= e^{x\mathcal{D}} Y(v, -x)u, \label{skew_sym} \\
(x_1 - x_2)^k Y(u, x_1) Y(v, x_2) &= (x_1 - x_2)^k Y(v, x_2) Y(u, x_1), \label{weak_asso}
\end{align}
where $k$ is a nonnegative integer depending on $u$ and $v$.

\begin{defi}
Let $(V, Y, \mathbf{1})$ be a vertex algebra. A linear isomorphism $g$ of $V$ is called an \emph{automorphism} of $V$ if it satisfies
\[
g(\mathbf{1}) = \mathbf{1} \quad \text{and} \quad g(Y(u, z)v) = Y(g(u), z)g(v) \quad \text{for all } u, v \in V.
\]
\end{defi}

Let $V$ be a vertex algebra, and fix $g$ to be an automorphism of $V$ of finite order $T$. Denote the imaginary unit by $\sqrt{-1}$. Then $V$ admits the eigenspace decomposition with respect to the action of $g$:
\[
V = \bigoplus_{r=0}^{T-1} V^r, \quad \text{where } V^r = \left\{ v \in V \mid gv = e^{-2\pi\sqrt{-1} r / T} v \right\}.
\]
In what follows, whenever we write $V^r$, we always assume $r \in \{0, 1, \ldots, T-1\}$. Note that $g$ commutes with $\mathcal{D}$, i.e., $g\mathcal{D} = \mathcal{D}g$, which implies $\mathcal{D}(V^r) \subset V^r$ for all $r$.

\begin{defi}\label{defi_coor_module}
A \emph{$g$-twisted $\phi$-coordinated $V$-module} is a vector space $W$ equipped with a linear map
\[
Y_W(\cdot, x): V \rightarrow \operatorname{Hom}\!\left(W, W\!\left(\left(x^{1/T}\right)\right)\right) \subset (\operatorname{End} W)\!\left[\left[x^{1/T}, x^{-1/T}\right]\right]
\]
satisfying the following conditions:

{\rm(1)} $Y_W(\mathbf{1}, x)=\operatorname{id}_W$ (the identity operator on $W$);
    
{\rm(2)}
    $
    Y_W(g v, x)=\lim _{x^{1/T} \rightarrow \omega_T^{-1} x^{1/T}} Y_W(v, x) 
    $ for any $v \in V$,
    where $\omega_T=e^{-2\pi\sqrt{-1}/T}$;
    
{\rm(3)}
    for any $u, v \in V$, there exists a nonnegative integer $k$ such that
    \begin{align*}
        \left(x_1-x_2\right)^k Y_W\left(u, x_1\right) Y_W\left(v, x_2\right) &\in \operatorname{Hom}\!\left(W, W\!\left(\left(x_1^{1/T}, x_2^{1/T}\right)\right)\right),\\
        x_2^k\left(e^{x_0}-1\right)^k Y_W\left(Y(u, {x_0}) v, x_2\right)&=\left.\left(\left(x_1-x_2\right)^k Y_W\left(u, x_1\right) Y_W\left(v, x_2\right)\right)\right|_{x_1^{1/T}=\left(x_2 e^{x_0}\right)^{1/T}}.
    \end{align*}
\end{defi}

\begin{rmk}
In the above definition, Axiom (2) is equivalent to the condition that $Y_W(u, x)w \in x^{-r/T} W((x))$ for any $u \in V^r$ and $w \in W$.
\end{rmk}

From \cite[Theorem 2.13]{LTW1} and \eqref{weak_asso}, we have:

\begin{thm}\label{jacobi}
Let $\left(W, Y_W\right)$ be a $g$-twisted $\phi$-coordinated $V$-module. Then the \emph{$g$-twisted $\phi$-Jacobi identity} holds:
\begin{align}
 \left(x_2 z\right)^{-1} \delta\!\left(\frac{x_1-x_2}{x_2 z}\right) &Y_W\left(u, x_1\right) Y_W\left(v, x_2\right) -\left(x_2 z\right)^{-1} \delta\!\left(\frac{x_2-x_1}{-x_2 z}\right)  Y_W\left(v, x_2\right) Y_W\left(u, x_1\right)\nonumber \\
 =&(1/T) \sum_{j=0}^{T-1} x_1^{-1} \delta\!\left(\omega_T^{-j}\left(\frac{x_2(1+z)}{x_1}\right)^{1/T}\right) Y_W\left(Y\left(g^j u, \log (1+z)\right) v, x_2\right), \nonumber
\end{align}
for any $u,v\in V$.
\end{thm}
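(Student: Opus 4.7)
The plan is to derive this identity from the iterate formula of Axiom~(3) together with the twisted weak commutativity, using formal-delta-function calculus and a $T$-th-root-of-unity averaging that converts the $g$-equivariance of Axiom~(2) into the right-hand side. This is the standard FLM-type derivation adapted to the $\phi$-coordinate change $x_1 = x_2 e^{x_0} = x_2(1+z)$; the reference \cite[Theorem 2.13]{LTW1} essentially carries out this argument, so my approach is to invoke that result and verify that it takes the precise form stated.

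First I would fix $u \in V^r$ and $v \in V$ and, by Axiom~(3), pick a nonnegative integer $k$ such that $(x_1-x_2)^k Y_W(u, x_1) Y_W(v, x_2)$ lies in $\operatorname{Hom}(W, W((x_1^{1/T}, x_2^{1/T})))$. Using \eqref{weak_asso} inside $V$, the iterate formula of Axiom~(3) run in both orders ($u,v$ and $v,u$), and the skew-symmetry \eqref{skew_sym}, one obtains twisted weak commutativity $(x_1-x_2)^k Y_W(u, x_1) Y_W(v, x_2) = (x_1-x_2)^k Y_W(v, x_2) Y_W(u, x_1)$ on $W$, together with the iterate formula
\[
x_2^k(e^{x_0}-1)^k Y_W(Y(u, x_0)v, x_2) = \left((x_1-x_2)^k Y_W(u, x_1) Y_W(v, x_2)\right)\big|_{x_1^{1/T}=(x_2 e^{x_0})^{1/T}}.
\]

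Next I would repackage this pair into delta-function form via the standard trick (cf.\ \cite{LL1}): insert suitable delta functions to remove the factor $(x_1-x_2)^k$, yielding the two terms $(x_2 z)^{-1}\delta((x_1-x_2)/(x_2 z)) Y_W(u, x_1) Y_W(v, x_2)$ and $(x_2 z)^{-1}\delta((x_2-x_1)/(-x_2 z)) Y_W(v, x_2) Y_W(u, x_1)$ after the change of variable $z = e^{x_0} - 1$, i.e.\ $x_0 = \log(1+z)$. For the right-hand side, I would realize the specialization $x_1^{1/T} = (x_2 e^{x_0})^{1/T}$ via the root-of-unity identity
\[
\frac{1}{T}\sum_{j=0}^{T-1}x_1^{-1}\delta\!\left(\omega_T^{-j}\!\left(\frac{x_2(1+z)}{x_1}\right)^{1/T}\right) h\!\left(x_1^{1/T}\right) = h\!\left((x_2(1+z))^{1/T}\right),
\]
valid for $h(x_1^{1/T}) \in W((x_1^{1/T}))$, and then invoke Axiom~(2), namely $Y_W(g^j u, x_1) = \omega_T^{jr} Y_W(u, x_1)$ for $u \in V^r$, to convert the $\omega_T^{jr}$ factors generated by the $j$-th root substitution into the expressions $Y_W(Y(g^j u, \log(1+z))v, x_2)$ appearing on the claimed right-hand side.

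The main technical obstacle is the careful bookkeeping of fractional powers of $x_1$ in the delta-function manipulations: one must verify that each $\delta(\omega_T^{-j}(x_2(1+z)/x_1)^{1/T})$ is well-defined as a formal series in the appropriate expansion scheme, that the averaging correctly projects onto the $r$-shifted sector dictated by the eigenspace decomposition $V = \bigoplus_r V^r$, and that the two sides agree as elements of the correct space of formal series in $x_1^{\pm 1/T}$, $x_2^{\pm 1/T}$, and $z$. Once these compatibility issues are settled, the identity follows by assembling the pieces.
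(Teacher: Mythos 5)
Your proposal matches the paper's treatment: the paper gives no independent proof of this theorem but simply derives it from \cite[Theorem 2.13]{LTW1} together with weak commutativity \eqref{weak_asso} in $V$, which is exactly the route you take (invoke the cited result, with a sketch of the underlying FLM-style derivation via twisted weak commutativity, delta-function repackaging, and root-of-unity averaging). The only caveat is that your displayed averaging identity for $h(x_1^{1/T})$ is stated too loosely for series with genuinely fractional powers, but since you explicitly flag this bookkeeping as the remaining verification and the paper itself supplies no more detail, this is not a gap relative to the paper's own argument.
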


In particular, when $u \in V^r$, the $g$-twisted $\phi$-Jacobi identity takes the following simplified form:
\begin{align}
    (x_2 z)^{-1} \delta\!\left(\frac{x_1 - x_2}{x_2 z}\right) &Y_W(u, x_1) Y_W(v, x_2) 
    - (x_2 z)^{-1} \delta\!\left(\frac{x_2 - x_1}{-x_2 z}\right) Y_W(v, x_2) Y_W(u, x_1) \nonumber \\
    &= x_1^{-1} \delta\!\left(\frac{x_2(1+z)}{x_1}\right) \left(\frac{x_2(1+z)}{x_1}\right)^{r/T} Y_W\left(Y(u, \log(1+z)) v, x_2\right). \label{phi_jacobi}
\end{align}
Applying the residue operator $\Res_z \, x_2$ to both sides of equation~\eqref{phi_jacobi} yields the commutator formula:
\begin{align}
    [Y_W(u,x_1),Y_W(v,x_2)]\label{comm_formula_zhengti}=&\Res_{x_0}x_2x_1^{-1} \delta\!\left(\frac{x_2(1+z)}{x_1}\right) \left(\frac{x_2(1+z)}{x_1}\right)^{r/T} \\&\times Y_W\left(Y(u, \log(1+z)) v, x_2\right).\nonumber
\end{align}
For any $m \in r/T + \mathbb{Z}$ and $n \in (1/T)\mathbb{Z}$, applying the residue operators $ \Res_{x_1} \Res_{x_2}\, x_1^m x_2^{n}$ to both sides of equation~\eqref{comm_formula_zhengti} gives:
\begin{equation}\label{comm_formula}
    [u_m, v_n] = \sum_{j \geq 0} \frac{(m+1)^j}{j!} (u_j v)_{m+n+1}.
\end{equation}

\begin{lem}\label{phi_weak_asso}
In the definition of a $g$-twisted $\phi$-coordinated $V$-module, the third axiom is equivalent to the following \emph{weak $g$-twisted $\phi$-associativity}:  
for any  $u \in V^r$,  $v \in V$ and  $w \in W$, there exists an integer $l \in \mathbb{N}$ such that
\[
(z+1)^{\,l + r/T} Y_W\!\big(u, (z+1)x_2\big) Y_W(v, x_2) w 
= (1+z)^{\,l + r/T} Y_W\!\big(Y(u, \log(1+z)) v, x_2\big) w.
\]
\end{lem}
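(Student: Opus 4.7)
The plan is to pass between the two axioms using the formal substitutions $x_0 = \log(1+z)$ (equivalently $z = e^{x_0}-1$) and $x_1 = (1+z)x_2 = e^{x_0}x_2$. Both directions then reduce to clean formal-series manipulations, the key subtlety being that these substitutions must preserve well-definedness in the appropriate formal rings.

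For the forward direction, I would fix $k\in\mathbb{N}$ as provided by Axiom~(3) of Definition~\ref{defi_coor_module}. Since $u\in V^r$, the element $A:=(x_1-x_2)^k Y_W(u,x_1)Y_W(v,x_2)w\in W((x_1^{1/T},x_2^{1/T}))$ contains only powers $x_1^s$ with $s\in r/T+\mathbb{Z}$, so the substitution $x_1=(1+z)x_2$ (with $(1+z)^s$ the binomial expansion) makes sense and produces $x_2^k z^k Y_W(u,(1+z)x_2)Y_W(v,x_2)w\in W((x_2^{1/T}))[[z]]$. Setting $x_0=\log(1+z)$ on the left-hand side of the associativity equation in Axiom~(3) rewrites this same element as $x_2^k z^k Y_W(Y(u,\log(1+z))v,x_2)w$. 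Cancelling $x_2^k z^k$ (injective on $W((x_2^{1/T}))((z))$) and multiplying by $(1+z)^{l+r/T}$ (for $l=0$, say) then yields the weak $\phi$-associativity.

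For the reverse direction, I would first note that the right-hand side $(1+z)^{l+r/T} Y_W(Y(u,\log(1+z))v,x_2)w$ is well-defined in $W((x_2^{1/T}))((z))$ because $Y(u,x_0)v\in V((x_0))$ forces $Y(u,\log(1+z))v\in V((z))$; hence so is the left-hand side. Cancelling the unit $(1+z)^{l+r/T}\in\mathbb{C}[[z]]$ and then substituting $z=e^{x_0}-1$ (valid on Laurent series in $z$ because $e^{x_0}-1$ has leading term $x_0$) gives $Y_W(u,e^{x_0}x_2)Y_W(v,x_2)w = Y_W(Y(u,x_0)v,x_2)w$ in $W((x_2^{1/T}))((x_0))$. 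Multiplying by $x_2^k(e^{x_0}-1)^k$ for $k$ large enough to absorb the negative $x_0$-powers of the right-hand side then reproduces the associativity equation of Axiom~(3).

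The hard part will be the simultaneous verification of the truncation condition of Axiom~(3), namely $(x_1-x_2)^k Y_W(u,x_1)Y_W(v,x_2)w\in W((x_1^{1/T},x_2^{1/T}))$, which does not follow purely formally from the cancelled identity above. My plan is to exploit that for the same $k$, $x_2^k(e^{x_0}-1)^k Y_W(u,e^{x_0}x_2)Y_W(v,x_2)w$ lies in $W((x_2^{1/T}))[[x_0]]$, and then to invoke a standard ``reverse substitution'' argument identifying this element as the image of $(x_1-x_2)^k Y_W(u,x_1)Y_W(v,x_2)w$ under $x_1=x_2 e^{x_0}$, forcing the preimage to lie in $W((x_1^{1/T},x_2^{1/T}))$.
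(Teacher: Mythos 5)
Your overall plan (pass between the two formulations via $x_0=\log(1+z)$, $x_1=(1+z)x_2$) is the right general idea, but both directions have genuine gaps at exactly the points where the formal calculus is delicate.

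\textbf{Forward direction.} Taking $l=0$ cannot work, and the slip occurs when you ``cancel $x_2^kz^k$.'' Substituting $x_1=(1+z)x_2$ into $A=(x_1-x_2)^kY_W(u,x_1)Y_W(v,x_2)w\in W((x_1^{1/T},x_2^{1/T}))$ is indeed well defined, but the result is the substitution of the \emph{product}; it does not factor as $(zx_2)^k$ times a separately existing series $Y_W(u,(1+z)x_2)Y_W(v,x_2)w$. The iterated substitution $Y_W(u,(1+z)x_2)Y_W(v,x_2)w$ (substituting into $Y_W(u,x_1)Y_W(v,x_2)w$ term by term) does \emph{not} exist in general: the $x_1$-truncation of $Y_W(u,x_1)v_tw$ depends on $t$, so the coefficient of a fixed $z^jx_2^m$ is an infinite sum. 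The entire content of ``there exists $l$'' is that $l$ must be chosen, depending on $u$ and $w$, with $x_1^{l+r/T}Y_W(u,x_1)w\in W[[x_1]]$; this is what makes the left-hand side of weak associativity a genuine element of $W((x_2^{1/T}))[[z]]$ after clearing $(zx_2)^k$, and it is also exactly the property used later in the reverse direction. The paper obtains the identity by applying $\operatorname{Res}_{x_1}x_1^{l+r/T}$ to the $g$-twisted $\phi$-Jacobi identity (Theorem~\ref{jacobi}), where this choice of $l$ simultaneously kills the reversed-order term and legitimizes the substitution; your route through Axiom~(3) alone does not produce a statement of the required form.

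\textbf{Reverse direction.} The ``hard part'' you flag is indeed the crux, but your proposed fix is circular: you cannot identify $x_2^k(e^{x_0}-1)^kY_W(u,e^{x_0}x_2)Y_W(v,x_2)w$ as ``the image of $(x_1-x_2)^kY_W(u,x_1)Y_W(v,x_2)w$ under $x_1=x_2e^{x_0}$'' and thereby ``force the preimage to lie in $W((x_1^{1/T},x_2^{1/T}))$,'' because that substitution is not even defined on the preimage until the two-variable truncation is already known, and no property of an image determines the truncation of a preimage. The paper proves the truncation directly: multiplying weak associativity by $(zx_2)^k$ shows the left-hand side has only nonnegative powers of $z$; then for every $j\in\mathbb{N}$ the residue $\operatorname{Res}_{x_1}x_1^{l+r/T+j}(x_1-x_2)^kY_W(u,x_1)Y_W(v,x_2)w$ is converted by the delta-function identity into $\operatorname{Res}_z$ of a series in $W((x_2^{1/T}))[[z]]$, hence vanishes. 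This yields \eqref{xiajieduan}, i.e.\ lower truncation in $x_1$, and then the two-variable condition because $k$ is independent of $w$; only after that is the substitution $x_1^{1/T}=(x_2e^{x_0})^{1/T}$ performed. You need to replace your ``reverse substitution'' step with this residue argument (or an equivalent one).
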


\begin{proof}
Suppose $(W, Y_W)$ is a $g$-twisted $\phi$-coordinated $V$-module. Let $u \in V^r$, $v \in V$, and $w \in W$. Choose $l \in \mathbb{N}$ such that
\[
x_1^{\,l + r/T} Y_W(u, x_1) w \in W[[x_1]].
\]
Applying $\operatorname{Res}_{x_1} x_1^{\,l + r/T}$ to the $g$-twisted $\phi$-Jacobi identity \eqref{phi_jacobi} acting on $w$, we obtain
\[
(z+1)^{\,l + r/T} x_2^{\,l + r/T} Y_W\!\big(u, (z+1)x_2\big) Y_W(v, x_2) w
= (1+z)^{\,l + r/T} x_2^{\,l + r/T} Y_W\!\big(Y(u, \log(1+z)) v, x_2\big) w.
\]
Cancelling the common factor $x_2^{\,l + r/T}$ yields the weak $g$-twisted $\phi$-associativity.

Conversely, assume weak $g$-twisted $\phi$-associativity holds. Fix $u \in V^r$ and $v \in V$. Since $V$ is a vertex algebra, there exists $k \in \mathbb{N}$ such that
$
x^k Y(u, x) v \in V[[x]].
$
For any $w \in W$, choose $l \in \mathbb{N}$ so that weak associativity holds with this $l$. Multiplying both sides by $(z x_2)^k$, we get
\begin{align*}
& (z+1)^{\,l + r/T} (z x_2)^k Y_W\!\big(u, (z+1)x_2\big) Y_W(v, x_2) w \\
=\, & (1+z)^{\,l + r/T} (z x_2)^k Y_W\!\big(Y(u, \log(1+z)) v, x_2\big) w.
\end{align*}
The right-hand side involves only nonnegative powers of $z$, and hence so does the left-hand side. Then for any $j\in\N$, we have
\begin{align*}
   &\Res_{x_1}x_1^{l+r/T+j}(x_1-x_2)^kY_W(u,x_1)Y(v,x_2)w\\
   =&\Res_{x_1}\Res_z x_2(x_2z)^{-1}\delta\left(
   \frac{x_1-x_2}{zx_2}\right)x_1^{l+r/T+j}(x_1-x_2)^kY_W(u,x_1)Y(v,x_2)w\\
   =&\Res_{x_1}\Res_z x_2x_1^{-1}\delta\left(
   \frac{(z+1)x_2}{x_1}\right)((z+1)x_2)^{l+r/T+j}(zx_2)^kY_W(u,(z+1)x_2)Y(v,x_2)w\\
   =& \Res_z x_2 ((z+1)x_2)^{l+r/T+j}(zx_2)^kY_W(u,(z+1)x_2)Y(v,x_2)w=0.
\end{align*}
This implies
\begin{align}
   x_1^{\,l+r/T} (x_1 - x_2)^k Y_W(u, x_1) Y_W(v, x_2) w \in W((x_2^{1/T}))[[x_1]].\label{xiajieduan} 
\end{align}
Since $k$ is independent of $w$, it follows that
\[
(x_1 - x_2)^k Y_W(u, x_1) Y_W(v, x_2) \in \operatorname{Hom}\!\big(W, W((x_1^{1/T}, x_2^{1/T}))\big).
\]
Now consider the substitution $x_1^{1/T} = (x_2 e^{x_0})^{1/T}$, which can be realized as the composition
\[
x_1^{1/T} = ( x_2 + zx_2)^{1/T} \quad \text{with} \quad z = e^{x_0} - 1.
\]
Under this substitution, we compute:
\begin{align*}
& \left. (x_2 e^{x_0})^{\,l + r/T} \big( (x_1 - x_2)^k Y_W(u, x_1) Y_W(v, x_2) w \big) \right|_{x_1^{1/T} = (x_2 e^{x_0})^{1/T}} \\
=\, & \left. \left( \left. x_1^{\,l + r/T} (x_1 - x_2)^k Y_W(u, x_1) Y_W(v, x_2) w \right|_{x_1^{1/T} = ( x_2 +z x_2)^{1/T}} \right) \right|_{z = e^{x_0} - 1} \\
=\, & \left. \left( \left. x_1^{\,l + r/T} (x_1 - x_2)^k Y_W(u, x_1) Y_W(v, x_2) w \right|_{x_1^{1/T} = (z x_2 + x_2)^{1/T}} \right) \right|_{z = e^{x_0} - 1}\qquad\qquad \text{(by \eqref{xiajieduan})}\\
=\, & \left. \left(   (z x_2 + x_2)^{\,l + r/T} (z x_2)^k Y_W(u, z x_2 + x_2) Y_W(v, x_2) w   \right) \right|_{z = e^{x_0} - 1} \\
=\, & \left. ( x_2 + zx_2)^{\,l + r/T} (z x_2)^k Y_W\!\big(Y(u, \log(1+z)) v, x_2\big) w \right|_{z = e^{x_0} - 1} \\
=\, & (x_2 e^{x_0})^{\,l + r/T} (x_2 e^{x_0} - x_2)^k Y_W\!\big(Y(u, x_0) v, x_2\big) w.
\end{align*}
Hence $(W, Y_W)$ satisfies the axioms of a $g$-twisted $\phi$-coordinated $V$-module.
\end{proof}

\begin{rmk}
Combining the proof of Lemma~\ref{phi_weak_asso} with Theorem~\ref{jacobi}, we conclude that weak $g$-twisted $\phi$-associativity is equivalent to the $g$-twisted $\phi$-Jacobi identity.
\end{rmk}

We now discuss the notion of the submodule generated by a subset of a $g$-twisted $\phi$-coordinated $V$-module.  
Let $W$ be a $g$-twisted $\phi$-coordinated $V$-module and let $U \subseteq W$. Define $\langle U \rangle$ to be the smallest submodule of $W$ containing $U$; we call $\langle U \rangle$ the \emph{submodule generated by $U$}. Equivalently, $\langle U \rangle$ is the intersection of all submodules of $W$ that contain $U$, and it can be described explicitly as
\[
\langle U \rangle = \operatorname{span}\left\{ v^{(1)}_{n_1} \cdots v^{(r)}_{n_r} w \,\middle|\, r \in \mathbb{N},\; v^{(1)}, \dots, v^{(r)} \in V,\; n_1, \dots, n_r \in (1/T)\mathbb{Z},\; w \in U \right\}.
\]

\begin{prop}\label{prop2.8}
Let $W$ be a $g$-twisted $\phi$-coordinated $V$-module and let $U \subseteq W$. Then
\[
\langle U \rangle = \operatorname{span}\left\{ v_n w \,\middle|\, v \in V,\; n \in (1/T)\mathbb{Z},\; w \in U \right\}.
\]
\end{prop}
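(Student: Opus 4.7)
The plan is to verify that the candidate set
\[
U' := \operatorname{span}\bigl\{v_n w : v \in V,\; n \in (1/T)\mathbb{Z},\; w \in U\bigr\}
\]
is a submodule of $W$ containing $U$. Since $U' \subseteq \langle U \rangle$ is immediate from the definition of $\langle U \rangle$, this forces $\langle U \rangle = U'$. The inclusion $U \subseteq U'$ follows from $w = \mathbf{1}_{-1} w$ (Axiom (1) of a vertex algebra applied via the module structure). So the real task is to show $U'$ is closed under every mode action, for which it suffices to prove: for all $a, b \in V$ with $a \in V^r$ (using the eigenspace decomposition), all $m \in r/T + \mathbb{Z}$, $n \in (1/T)\mathbb{Z}$, and $w \in U$, one has $a_m b_n w \in U'$.

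For this key 2-mode step, I would invoke weak $g$-twisted $\phi$-associativity (Lemma~\ref{phi_weak_asso}) applied to $(a, b, w)$: for some $l \in \mathbb{N}$,
\[
(z+1)^{l+r/T} Y_W(a, (z+1)x_2)\, Y_W(b, x_2)\, w = (1+z)^{l+r/T} Y_W\!\bigl(Y(a, \log(1+z))\, b, x_2\bigr)\, w.
\]
Extracting the coefficient of $x_2^{-p-1}$, with $p := m+n+1$, on both sides (by expanding $Y_W(a,\cdot)$ and $Y_W(b,\cdot)$ into modes) yields the identity
\[
\sum_{k \in r/T + \mathbb{Z}} a_k\, b_{p-k-1}\, w \cdot (1+z)^{l+r/T-k-1} = \sum_{j} (a_j b)_p\, w \cdot (1+z)^{l+r/T} (\log(1+z))^{-j-1}
\]
of formal expressions in $z$ with coefficients in $W$. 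Since $Y(a, x) b \in V((x))$, only finitely many $j$ contribute on the right, and each $(a_j b)_p w$ is a 1-mode product on $w \in U$, hence lies in $U'$; so the right-hand side lies in $U' \otimes_{\mathbb{C}} \mathbb{C}((z))$.

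Reducing modulo $U'$ gives
\[
\sum_{k} \overline{a_k b_{p-k-1} w} \cdot (1+z)^{l+r/T-k-1} = 0 \quad \text{in } (W/U')((z)).
\]
The well-definedness of the left side (from Lemma~\ref{phi_weak_asso}) forces $\{k : a_k b_{p-k-1} w \neq 0\}$ to be finite: for instance, its $z^0$-coefficient is $\sum_k a_k b_{p-k-1} w$, which must be a genuine element of $W$ and hence a finite sum. Now the exponents $l+r/T - k - 1$ are distinct integers as $k$ ranges over $r/T + \mathbb{Z}$, and the elements $\{(1+z)^{\alpha}\}_{\alpha \in \mathbb{Z}}$ are linearly independent over $\mathbb{C}$ in $\mathbb{C}((z))$ (after substituting $y = 1+z$, distinct Laurent monomials $y^{\alpha}$ are linearly independent in $\mathbb{C}[[y-1]]$). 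Combining these two observations, each $\overline{a_k b_{p-k-1} w}$ vanishes, i.e., $a_k b_{p-k-1} w \in U'$; taking $k = m$ gives $a_m b_n w \in U'$, as required.

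The main obstacle I anticipate is the careful bookkeeping around the formal-series identity: one must fix an appropriate formal space in $z$ large enough to house the fractional $(1+z)^{\alpha}$ factors and the $(\log(1+z))^{-j-1}$ poles on the right, yet structured enough that the linear-independence argument applies after passing to $(W/U')((z))$. A secondary technical point is rigorously deducing the finiteness of $\{k : a_k b_{p-k-1} w \neq 0\}$ from the composability axiom (3) of Definition~\ref{defi_coor_module} (which is what guarantees the well-definedness invoked above). Once the 2-mode case is established, closure of $U'$ under every mode follows immediately, and arbitrarily long iterated mode products $v^{(1)}_{n_1}\cdots v^{(r)}_{n_r} w$ collapse into $U'$ by straightforward induction on $r$.
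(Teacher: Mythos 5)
Your overall reduction is the same as the paper's: show that the single-mode span $U'$ is closed under a further mode action, which comes down to proving $a_m b_n w \in U'$ via weak $g$-twisted $\phi$-associativity. The execution of that key step, however, has a genuine gap. You expand the left-hand side of the associativity relation as
$\sum_{k} a_k b_{p-k-1} w\,(1+z)^{l+r/T-k-1}$
and then claim that well-definedness forces $\{k : a_k b_{p-k-1} w \neq 0\}$ to be finite. This set is infinite in general. Indeed, by the commutator formula \eqref{comm_formula},
\[
a_k b_{p-k-1} w = b_{p-k-1} a_k w + \sum_{i \geq 0} \frac{(k+1)^i}{i!}\,(a_i b)_{p}\, w ,
\]
and while $a_k w = 0$ for $k \gg 0$, the second term is a polynomial in $k$ that is generically nonzero for all large $k$ (already for Heisenberg-type examples). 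So the ``$z^0$-coefficient'' $\sum_k a_k b_{p-k-1} w$ of your left-hand side is an honestly infinite sum, the naive term-by-term substitution $x_1 \mapsto (z+1)x_2$ in the double mode expansion is not defined, and there is no well-defined series in $(W/U')((z))$ on which to run the linear-independence argument. (The argument is also internally circular: you use the well-definedness of the expanded form to prove the finiteness that the expanded form presupposes.)

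The substitution $x_1 = (z+1)x_2$ is only meaningful for the full expression tied together by the delta function $(x_2 z)^{-1}\delta\bigl(\tfrac{x_1-x_2}{x_2 z}\bigr)$ (or after multiplying by $(x_1-x_2)^K$ as in the composability axiom); there the binomial expansion of $(x_1-x_2)^n$ in nonnegative powers of $x_2$ bounds $k$ from above for each fixed power of $z$, so every coefficient is a finite sum. This is exactly how the paper proceeds: it writes $u_m v_n w = \Res_z\Res_{x_1}\Res_{x_2}\, x_1^m x_2^{n+1} (x_2 z)^{-1}\delta\bigl(\tfrac{x_1-x_2}{x_2 z}\bigr) Y_W(u,x_1)Y_W(v,x_2)w$, converts the delta function, collapses $\Res_{x_1}$, and then applies weak associativity to land directly on $\Res_z\Res_{x_2}$ of the iterate side, which is manifestly a finite combination of single modes $(a_j b)_q w$. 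No separation of individual anti-diagonal terms is needed. To salvage your route you would have to work with the truncated, binomial-weighted coefficients that actually occur and extract the individual products by a downward induction on $k$ --- at which point you have essentially reconstructed the residue computation. I recommend replacing the coefficient-extraction step with the delta-function residue identity.
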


\begin{proof}
Let $u \in V^r$, $v \in V$, and $w \in U$. By weak $g$-twisted $\phi$-associativity (Lemma~\ref{phi_weak_asso}), there exists $l \in \mathbb{N}$ such that
\[
(z+1)^{\,l + r/T} Y_W\!\big(u, (z+1)x_2\big) Y_W(v, x_2) w 
= (1+z)^{\,l + r/T} Y_W\!\big(Y(u, \log(1+z)) v, x_2\big) w.
\]
The proposition follows from the following computation: for any $m \in r/T + \mathbb{Z}$ and $n \in (1/T)\mathbb{Z}$,
\begin{align*}
u_m v_n w
&= \Res_z\Res_{x_1} \Res_{x_2} x_1^m x_2^{n+1} (x_2 z)^{-1} \delta\!\left( \frac{x_1 - x_2}{x_2 z} \right) Y_W(u, x_1) Y_W(v, x_2) w \\
&= \Res_z\Res_{x_1} \Res_{x_2} ((z+1)x_2)^m x_2^{n+1} x_1^{-1} \delta\!\left( \frac{(z+1)x_2}{x_1} \right) Y_W(u, (z+1)x_2) Y_W(v, x_2) w \\
&= \Res_z\Res_{x_2} (z+1)^m x_2^{m+n+1} Y_W(u, (z+1)x_2) Y_W(v, x_2) w \\
&= \Res_z\Res_{x_2} (z+1)^{m - l - r/T} x_2^{m+n+1} \Big( (z+1)^{l + r/T} Y_W(u, (z+1)x_2) Y_W(v, x_2) w \Big) \\
&= \Res_z\Res_{x_2} (z+1)^{m - l - r/T} x_2^{m+n+1} \Big( (1+z)^{l + r/T} Y_W\!\big(Y(u, \log(1+z)) v, x_2\big) w \Big).
\end{align*}
Since the right-hand side lies in the span of vectors of the form $a_k w$ with $a \in V$, $k \in (1/T)\mathbb{Z}$, and $w \in U$, it follows inductively that all higher products reduce to linear combinations of single-mode actions. Hence $\langle U \rangle$ is already spanned by elements $v_n w$ with $v \in V$, $n \in (1/T)\mathbb{Z}$, and $w \in U$.
\end{proof}

From \cite[Lemma 2.10]{LTW1}, we have:

\begin{lem}\label{daozi}
Let  $(W, Y_W)$ be a $g$-twisted $\phi$-coordinated $V$-module. Then for any $v \in V$,
\[
Y_W(\mathcal{D} v, x) = \left( x \frac{d}{dx} \right) Y_W(v, x).
\]
\end{lem}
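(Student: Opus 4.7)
The plan is to derive this identity from weak $g$-twisted $\phi$-associativity (Lemma~\ref{phi_weak_asso}) by specializing the second argument to the vacuum $\mathbf{1}$, and then differentiating in $z$ and evaluating at $z=0$. Since both sides of the claimed identity are linear in $v$ and $V=\bigoplus_r V^r$, I may assume $v=u\in V^r$.

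First, I would apply Lemma~\ref{phi_weak_asso} with the vectors $u\in V^r$ and $\mathbf{1}$: there exists $l\in\mathbb{N}$ such that, for any $w\in W$,
\[
(z+1)^{\,l+r/T}\,Y_W\!\big(u,(z+1)x_2\big)\,Y_W(\mathbf{1},x_2)\,w
= (1+z)^{\,l+r/T}\,Y_W\!\big(Y(u,\log(1+z))\mathbf{1},x_2\big)\,w.
\]
Since $Y_W(\mathbf{1},x_2)=\operatorname{id}_W$ and, in $V$, we have $Y(u,x_0)\mathbf{1}=e^{x_0\mathcal{D}}u$, the right-hand side simplifies to $(1+z)^{\,l+r/T}\sum_{k\ge 0}\tfrac{(\log(1+z))^{k}}{k!}\,Y_W(\mathcal{D}^{k}u,x_2)w$. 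Dividing both sides by the unit $(1+z)^{\,l+r/T}\in \mathbb{C}[[z]]$ yields the cleaner identity
\[
Y_W\!\big(u,(z+1)x_2\big)\,w
= \sum_{k\ge 0}\frac{(\log(1+z))^{k}}{k!}\,Y_W(\mathcal{D}^{k}u,x_2)\,w,
\]
regarded as an equality in $W((x_2^{1/T}))[[z]]$ (the left side via the substitution $x_1=(1+z)x_2$ applied to $Y_W(u,x_1)w=\sum_n u_n w\, x_1^{-n-1}$).

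Next, I would apply $\tfrac{d}{dz}\big|_{z=0}$ to both sides. On the left, expanding $((1+z)x_2)^{-n-1}=x_2^{-n-1}(1+z)^{-n-1}$ and differentiating gives $\sum_n(-n-1)u_n w\,x_2^{-n-1}=x_2\tfrac{d}{dx_2}Y_W(u,x_2)w$. On the right, since $\log(1+z)$ vanishes at $z=0$, only the $k=1$ term survives the evaluation, contributing $\tfrac{d}{dz}\log(1+z)\big|_{z=0}\cdot Y_W(\mathcal{D}u,x_2)w=Y_W(\mathcal{D}u,x_2)w$. Equating the two and noting that $w\in W$ is arbitrary yields the desired identity.

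The only delicate point is bookkeeping: verifying that the formal substitution $x_1=(z+1)x_2$ produces a bona fide element of $W((x_2^{1/T}))[[z]]$ on which $\tfrac{d}{dz}\big|_{z=0}$ acts termwise, and checking that cancellation of the factor $(1+z)^{\,l+r/T}$ is legitimate. Both issues are routine once one observes that $Y_W(u,x_1)w\in x_1^{-r/T}W((x_1))$ (so the substitution makes sense after multiplying by a sufficiently high power of $1+z$), and that $(1+z)^{\,l+r/T}$ is invertible in $\mathbb{C}[[z]]$. I do not expect any substantial obstacle beyond this formal-series hygiene.
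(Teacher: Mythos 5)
Your argument is correct. Note first that the paper does not prove this lemma at all: it simply quotes it from \cite[Lemma 2.10]{LTW1}. Your derivation—specializing weak $g$-twisted $\phi$-associativity to $v=\mathbf{1}$, using $Y_W(\mathbf{1},x_2)=\operatorname{id}_W$ and the creation property $Y(u,x_0)\mathbf{1}=e^{x_0\mathcal{D}}u$ (which follows from \eqref{skew_sym} with $v=\mathbf{1}$), cancelling the unit $(1+z)^{l+r/T}$, and extracting the coefficient of $z^{1}$—is a legitimate self-contained proof, and the formal-series bookkeeping you flag does check out: after the substitution $x_1=(1+z)x_2$ (expanded in nonnegative powers of $z$, as forced by the delta-function manipulation in the proof of Lemma~\ref{phi_weak_asso}), both sides live in $\bigl(x_2^{-r/T}W((x_2))\bigr)[[z]]$, where multiplication by $(1+z)^{l+r/T}\in\C[[z]]^{\times}$ is injective and $\tfrac{d}{dz}\big|_{z=0}$ acts termwise. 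Two cosmetic remarks. First, in Lemma~\ref{phi_weak_asso} the integer $l$ depends on $w$ as well as on $u$; you inverted the quantifiers, but this is harmless since $l$ disappears after cancellation and the argument is carried out for each fixed $w$. Second, there is a slightly shorter route that avoids Lemma~\ref{phi_weak_asso} and the $(1+z)^{l+r/T}$ factor altogether: apply axiom (3) of Definition~\ref{defi_coor_module} with $v=\mathbf{1}$, where $k=0$ is admissible because $Y_W(u,x_1)Y_W(\mathbf{1},x_2)=Y_W(u,x_1)$ already lies in $\operatorname{Hom}(W,W((x_1^{1/T},x_2^{1/T})))$; cancelling $x_2^k(e^{x_0}-1)^k$ gives $Y_W(e^{x_0\mathcal{D}}u,x_2)=Y_W(u,x_2e^{x_0})$ directly, and comparing coefficients of $x_0$ finishes the proof. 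Your version is the same computation in the variable $z=e^{x_0}-1$.
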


\begin{defi}
A \emph{$(1/T)\mathbb{N}$-graded $g$-twisted $\phi$-coordinated $V$-module} is a $g$-twisted $\phi$-coordinated $V$-module $W$ equipped with a grading
$
W = \bigoplus_{n \in (1/T)\mathbb{N}} W(n)
$
such that for all $v \in V$, $m \in (1/T)\mathbb{Z}$, and $n \in (1/T)\mathbb{N}$,
$
v_m \, W(n) \subseteq W(n - m - 1).
$
\end{defi}

\begin{rmk}
This definition differs slightly from Definition~2.6 in \cite{L2} due to Li. In what follows, whenever we refer to a nonzero $(1/T)\mathbb{N}$-graded $g$-twisted $\phi$-coordinated $V$-module $W$, we always assume that its lowest graded component is nontrivial, i.e., $W(0) \neq 0$.
\end{rmk}

\section{$\tilde{A}_{g,n}(V)\!-\!\tilde{A}_{g,m}(V)$-bimodule $\tilde{A}_{g,n,m}(V)$}

Let $V$ be a vertex algebra, and let $g$ be an automorphism of $V$ of finite order $T$.  
For $k, l \in \{0, 1, \ldots, T-1\}$, define
\[
\delta_k(l) =
\begin{cases}
1, & \text{if } k \geq l, \\
0, & \text{if } k < l,
\end{cases}
\qquad\text{and set }\delta_k(T) = 0.
\]
Every $n \in (1/T)\mathbb{Z}$ admits a unique decomposition
$
n = \lfloor n \rfloor + \bar{n}/T,
$
where $\bar{n} \in \{0,1,\dots,T-1\}$ and $\lfloor \cdot \rfloor$ denotes the floor function.
Let $u \in V^r$, $v \in V$, and $m,n,p \in (1/T)\mathbb{Z}$. The product $\bullet_{g,m,p}^{\,n}$ on $V$ is defined by
\begin{align*}
u \bullet_{g,m,p}^{\,n} v
&= \sum_{i=0}^{\lfloor p \rfloor} (-1)^i 
   \binom{\lfloor m \rfloor + \lfloor n \rfloor - \lfloor p \rfloor - 1 
         + \delta_{\bar{m}}(r) + \delta_{\bar{n}}(T - r) + i}{i} \\
&\quad \cdot \operatorname{Res}_{x} 
   \frac{e^{x(\lfloor m \rfloor + \delta_{\bar{m}}(r) + r/T)}}
        {(e^x - 1)^{\lfloor m \rfloor + \lfloor n \rfloor - \lfloor p \rfloor 
                   + \delta_{\bar{m}}(r) + \delta_{\bar{n}}(T - r) + i}} 
   Y(u,x)v \\
&= \sum_{i=0}^{\lfloor p \rfloor} (-1)^i 
   \binom{\lfloor m \rfloor + \lfloor n \rfloor - \lfloor p \rfloor - 1 
         + \delta_{\bar{m}}(r) + \delta_{\bar{n}}(T - r) + i}{i} \\
&\quad \cdot \operatorname{Res}_{y} 
   \frac{(1+y)^{-1 + \lfloor m \rfloor + \delta_{\bar{m}}(r) + r/T}}
        {y^{\lfloor m \rfloor + \lfloor n \rfloor - \lfloor p \rfloor 
            + \delta_{\bar{m}}(r) + \delta_{\bar{n}}(T - r) + i}} 
   Y\bigl(u,\log(1+y)\bigr)v,
\end{align*}
if $\bar{p} - \bar{n} \equiv r \pmod{T}$ and $m,n,p \geq 0$; otherwise we set $u \bullet_{g,m,p}^{\,n} v = 0$.
Set
$
\bar{\bullet}_{g,m}^{\,n} := \bullet_{g,m,n}^{\,n},$ 
$\bullet_{g,m}^{\,n} := \bullet_{g,m,m}^{\,n}$, 
$\bullet_{g,n} := \bullet_{g,n}^{\,n} = \bar{\bullet}_{g,n}^{\,n}
$ and $\bullet_g:=\bullet_{g,0}$.
When $r \neq 0$, one has $u \bar{\bullet}_{g,m}^{\,n} v = 0$. If $r = 0$, then
\[
u \bar{\bullet}_{g,m}^{\,n} v
= \sum_{i=0}^{\lfloor n \rfloor} (-1)^i \binom{\lfloor m \rfloor + i}{i}
  \operatorname{Res}_y \frac{(1+y)^{\lfloor m \rfloor}}{y^{\lfloor m \rfloor + i + 1}}
  Y\bigl(u,\log(1+y)\bigr)v.
\]
In particular, for any $v \in V$,
$
\mathbf{1} \bar{\bullet}_{g,m}^{\,n} v = v.
$
The product $\bullet_{g,m}^{\,n}$ vanishes unless $\bar{m} - \bar{n} \equiv r \pmod{T}$. In this case,
$
-1 + \delta_{\bar{m}}(r) + \delta_{\bar{n}}(T - r) = 0,
$
and consequently
\[
u \bullet_{g,m}^{\,n} v
= \sum_{i=0}^{\lfloor m \rfloor} (-1)^i \binom{\lfloor n \rfloor + i}{i}
  \operatorname{Res}_y 
  \frac{(1+y)^{-1 + \lfloor m \rfloor + \delta_{\bar{m}}(r) + r/T}}
       {y^{\lfloor n \rfloor + i + 1}}
  Y\bigl(u,\log(1+y)\bigr)v.
\]
For $m,n \in (1/T)\mathbb{N}$, define
\[
\tilde{O}_{g,n,m}'(V)
:= \operatorname{span}\{ u \diamond_{g,m}^{\,n} v \mid u,v \in V \} + L_{n,m}(V),
\]
where
\[
L_{n,m}(V) := \operatorname{span}\{ (\mathcal{D} + m - n)u \mid u \in V \},
\]
and for $u \in V^r$, $v \in V$,
\begin{align*}
u \diamond_{g,m}^{\,n} v
&= \operatorname{Res}_{x} 
   \frac{e^{x(\delta_{\bar{m}}(r) + \lfloor m \rfloor + r/T)}}
        {(e^x - 1)^{\lfloor m \rfloor + \lfloor n \rfloor 
                    + \delta_{\bar{m}}(r) + \delta_{\bar{n}}(T - r) + 1}} 
   Y(u,x)v \\
&= \operatorname{Res}_{y} 
   \frac{(1+y)^{-1 + \delta_{\bar{m}}(r) + \lfloor m \rfloor + r/T}}
        {y^{\lfloor m \rfloor + \lfloor n \rfloor 
            + \delta_{\bar{m}}(r) + \delta_{\bar{n}}(T - r) + 1}} 
   Y\bigl(u,\log(1+y)\bigr)v.
\end{align*}
Set $u \diamond_{g,n} v := u \diamond_{g,n}^{\,n} v$, $u\diamond_g v:=\diamond_{g,0}v$,  $\tilde{O}_{g,n}(V) := \tilde{O}_{g,n,n}'(V)$ and $\tilde{O}_{g}(V):=\tilde{O}_{g,0}(V)$.  
Finally, define the quotient space
\[
\tilde{A}_{g,n}(V) := V / \tilde{O}_{g,n}(V)\quad\text{ and }\quad\tilde{A}_{g }(V) := \tilde{A}_{g,0}(V).
\]
In our previous work~\cite{Shun1}, the following result was proved:

\begin{thm} \label{thm:associative_algebra_structure}
The product $\bullet_{g,n}$ induces the structure of an associative algebra on $\tilde{A}_{g,n}(V) = V / \tilde{O}_{g,n}(V)$, with identity element $\mathbf{1} + \tilde{O}_{g,n}(V)$.
\end{thm}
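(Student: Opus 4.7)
The plan is to follow the pattern of the classical Zhu-type associativity proofs (as in Dong--Li--Mason for $A_{g,n}(V)$) in three separate verifications: $(\mathrm{i})$ $\mathbf{1}$ is a two-sided identity modulo $\tilde{O}_{g,n}(V)$; $(\mathrm{ii})$ $\tilde{O}_{g,n}(V)$ is a two-sided ideal under $\bullet_{g,n}$; $(\mathrm{iii})$ the induced product is associative. The key device throughout is to interpret the residue expressions defining $\bullet_{g,n}$, $\diamond_{g,n}$, and the relation $\mathcal{D}u$ as evaluations of a single generating series in $Y(u,\log(1+y))v$, so that all identities reduce to manipulations of this generating series, controlled by \eqref{eq:translation}, \eqref{skew_sym}, \eqref{weak_asso}, and the Jacobi identity of $V$.

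For $(\mathrm{i})$, the identity $\mathbf{1}\bullet_{g,n}v=v$ is immediate, since for $u=\mathbf{1}\in V^0$ only the $i=0$ term survives and $Y(\mathbf{1},\log(1+y))v=v$ gives, after the residue in $y$, exactly $v$. For $v\bullet_{g,n}\mathbf{1}\equiv v$, I would apply skew-symmetry \eqref{skew_sym} to write $Y(v,\log(1+y))\mathbf{1}=e^{\log(1+y)\mathcal{D}}v=(1+y)^{\mathcal{D}}v$, then integrate by parts in the contour integral using that $\frac{d}{dy}(1+y)^a=a(1+y)^{a-1}$, at which point all error terms accumulate into a derivative in $\mathcal{D}$, i.e.\ into $L_{n,n}(V)=\operatorname{span}\mathcal{D}V$.

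For $(\mathrm{ii})$, the ideal property needs to be checked separately for $L_{n,n}(V)$ and for the $\diamond_{g,n}^{n}$ part. For $L_{n,n}$, apply \eqref{eq:translation} to write $Y(\mathcal{D}u,y)=\frac{d}{dy}Y(u,y)$; combined with the chain rule for $\log(1+y)$, integration by parts in the defining residue rewrites $(\mathcal{D}u)\bullet_{g,n}v$ and $u\bullet_{g,n}(\mathcal{D}v)$ as explicit combinations of a $\diamond_{g,n}^{n}$ term and a $\mathcal{D}$-term, hence as elements of $\tilde{O}_{g,n}(V)$. For the $\diamond$ part, the absorption of $(u\diamond_{g,n}^{n}v)\bullet_{g,n}w$ and $u\bullet_{g,n}(v\diamond_{g,n}^{n}w)$ into $\tilde{O}_{g,n}(V)$ follows from the iterate/associativity identity for $Y$: replace $Y(a,\log(1+y))b$ by the formal iterate expansion and use weak associativity \eqref{weak_asso} to recombine the exponents of $(1+y)$ and $y$ so that either the pole order of $y$ strictly exceeds $\lfloor n\rfloor+\lfloor n\rfloor+\delta_{\bar{n}}(r)+\delta_{\bar{n}}(T-r)+1$ (landing in the $\diamond$-span) or the integrand becomes a total derivative (landing in $\mathcal{D}V$).

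The main obstacle is $(\mathrm{iii})$, associativity. The cleanest route is to express both $(u\bullet_{g,n}v)\bullet_{g,n}w$ and $u\bullet_{g,n}(v\bullet_{g,n}w)$ as iterated two-variable residues
\[
\operatorname{Res}_{y_1}\operatorname{Res}_{y_2}\,F_1(y_1)F_2(y_2)\,Y(u,\log(1+y_1))Y(v,\log(1+y_2))w
\]
and the opposite iterate, where $F_1,F_2$ are the rational factors in $y_i$ coming from the definition of $\bullet_{g,n}$. The Jacobi identity for $V$, transported through the substitution $x=\log(1+y)$, then expresses the difference of the two iterates as a residue involving $Y(Y(u,x_0)v,\log(1+y))w$, which must be shown to lie in $\tilde{O}_{g,n}(V)$ modulo the grading shifts encoded by $\delta_{\bar{m}}(r)$. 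The delicate bookkeeping is the case analysis on the residues $\bar{r}_u,\bar{r}_v$ of the homogeneous components $u\in V^{r_u},v\in V^{r_v}$: one must check that the coefficient $\delta_{\bar{m}}(r_u+r_v)$ on the right-hand side matches the sum $\delta_{\bar{m}}(r_u)+\delta_{\bar{m'}}(r_v)$ produced on the left, with the discrepancy (a unit jump when $\bar{r}_u+\bar{r}_v\geq T$) absorbed by a compensating shift in $\lfloor m\rfloor$ inside the $\diamond$ term. This case analysis, together with the binomial identity needed to match the coefficients $\binom{\lfloor n\rfloor+i}{i}$ produced by the two orderings, is the main technical hurdle; once verified, associativity in $\tilde{A}_{g,n}(V)$ follows, and I would finally remark that everything reduces to the computations carried out in \cite{Shun1}.
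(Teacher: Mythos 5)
Your plan is essentially the paper's own approach: the paper quotes this theorem from \cite{Shun1} without proof, but the method is exactly the one it carries out in Section~3 for the bimodule generalization (identity element via skew-symmetry as in Corollary~\ref{cor3.4}(2), the ideal property as in Lemma~\ref{lem3.5}, associativity via the double-residue/Jacobi-identity computation of the subsequent lemma, all specializing to $n=m$), and your three steps follow that same route. The one point to sharpen is the claim in (ii) that terms of higher pole order land ``in the $\diamond$-span'': they do not literally (the $\diamond$ elements have one fixed pole order), and what is actually needed is the inductive absorption lemma (Lemma~\ref{prop1}), which places
$\operatorname{Res}_x\, e^{x(\delta_{\bar n}(r)+\lfloor n\rfloor+r/T+s)}(e^x-1)^{-(2\lfloor n\rfloor+\delta_{\bar n}(r)+\delta_{\bar n}(T-r)+1+k)}\,Y(u,x)v$
into $\tilde{O}_{g,n}(V)$ only under the constraint $k\geq s\geq 0$ between the numerator excess and the pole-order excess --- this is the exponent bookkeeping your steps (ii) and (iii) implicitly rely on.
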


The proof of the following lemma is analogous to that of~\cite[Lemma~3.1]{Shun1}.

\begin{lem}\label{prop1}
Let $u \in V^r$, $v \in V$, and let $k, s$ be integers with $k \geq s \geq 0$. Then
\[
\operatorname{Res}_{x} \frac{e^{x(\delta_{\bar{m}}(r)+\lfloor m\rfloor+r / T+s)}}{(e^x-1)^{\lfloor m\rfloor+\lfloor n\rfloor+\delta_{\bar{m}}(r)+\delta_{\bar{n}}(T-r)+1+k}}\, Y(u, x)v \in \tilde{O}_{g,n,m}^\prime(V).
\]
\end{lem}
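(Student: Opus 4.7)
The plan is induction on $k\ge 0$. Write
\[
a := \lfloor m\rfloor + \delta_{\bar m}(r) + r/T, \qquad b := \lfloor m\rfloor + \lfloor n\rfloor + \delta_{\bar m}(r) + \delta_{\bar n}(T-r) + 1\ge 1,
\]
and denote the quantity in the lemma by $F_{k,s}(u,v):=\operatorname{Res}_x \frac{e^{x(a+s)}}{(e^x-1)^{b+k}}\,Y(u,x)v$. The base case $k=0$ forces $s=0$, and by the definition of $\diamond_{g,m}^{\,n}$ one has $F_{0,0}(u,v)=u\diamond_{g,m}^{\,n}v\in\tilde{O}_{g,n,m}^{\prime}(V)$. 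Two elementary identities in $\mathbb{C}((x))$ will drive the induction: the derivative identity
\[
\frac{d}{dx}\!\left(\frac{e^{x(a+s)}}{(e^x-1)^{b+k-1}}\right) = (a+s)\,\frac{e^{x(a+s)}}{(e^x-1)^{b+k-1}} - (b+k-1)\,\frac{e^{x(a+s+1)}}{(e^x-1)^{b+k}},
\]
and the algebraic identity $(e^x-1)\cdot \frac{e^{x(a+s)}}{(e^x-1)^{b+k}} = \frac{e^{x(a+s)}}{(e^x-1)^{b+k-1}}$, which rewrites as $F_{k,s}(u,v)=F_{k,s+1}(u,v)-F_{k-1,s}(u,v)$.

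For the inductive step, assume $F_{k-1,s}(u,v)\in\tilde{O}_{g,n,m}^{\prime}(V)$ for all $u\in V^r$, $v\in V$ and $0\le s\le k-1$. Since $(\mathcal{D}+m-n)F_{k-1,s}(u,v)\in L_{n,m}(V)\subset \tilde{O}_{g,n,m}^{\prime}(V)$, I expand the left-hand side using $[\mathcal{D},Y(u,x)]=\frac{d}{dx}Y(u,x)$ together with formal integration by parts to obtain
\[
(\mathcal{D}+m-n)F_{k-1,s}(u,v)=F_{k-1,s}(u,\mathcal{D}v)+(m-n)F_{k-1,s}(u,v)-\operatorname{Res}_x \frac{d}{dx}\!\left(\frac{e^{x(a+s)}}{(e^x-1)^{b+k-1}}\right)\!Y(u,x)v.
\]
By the inductive hypothesis the first two summands on the right lie in $\tilde{O}_{g,n,m}^{\prime}(V)$ (the pair $(u,\mathcal{D}v)\in V^r\times V$ being a legitimate input), hence so does the third. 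Substituting the derivative identity and applying the hypothesis once more yields $(b+k-1)F_{k,s+1}(u,v)\in\tilde{O}_{g,n,m}^{\prime}(V)$; since $b+k-1$ is a positive integer, $F_{k,s+1}(u,v)\in\tilde{O}_{g,n,m}^{\prime}(V)$ for all $0\le s\le k-1$. This proves the lemma for all second-slot indices $1\le t\le k$.

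The remaining case $t=0$ is settled by the algebraic identity: $F_{k,0}(u,v)=F_{k,1}(u,v)-F_{k-1,0}(u,v)$, and both summands are now known to lie in $\tilde{O}_{g,n,m}^{\prime}(V)$ (the first by the previous step, the second by the inductive hypothesis). The main, though mild, obstacle is purely bookkeeping: one must verify that $b+k-1\ge 1$ so division is legal, that $\mathcal{D}$ commutes past $\operatorname{Res}_x$, and that every invocation of the inductive hypothesis is applied to a valid pair $(u,\cdot)\in V^r\times V$. Otherwise the argument closely mirrors the proof of \cite[Lemma~3.1]{Shun1}, to which the author's remark already points.
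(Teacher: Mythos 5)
Your proof is correct, and the engine is the same as the paper's: an induction on the pole order $k$ driven by integration by parts against the $\mathcal{D}$-derivation property, with the crucial positivity of the coefficient $b+k-1=\lfloor m\rfloor+\lfloor n\rfloor+\delta_{\bar m}(r)+\delta_{\bar n}(T-r)+k$ allowing division. The two arguments differ only in bookkeeping: the paper first eliminates $s$ entirely via the substitution $y=e^x-1$ and the binomial expansion $(1+y)^s=\sum_{i=0}^{s}\binom{s}{i}y^i$, which gives $F_{k,s}=\sum_{i=0}^{s}\binom{s}{i}F_{k-i,0}$ and reduces everything to the $s=0$ case, and then in the inductive step it feeds $\mathcal{D}u\in V^r$ into the \emph{first} slot (using $Y(\mathcal{D}u,x)=\frac{d}{dx}Y(u,x)$ and the induction hypothesis applied to the pair $(\mathcal{D}u,v)$), never invoking $L_{n,m}(V)$. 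You instead carry $s$ through the induction and inject $\mathcal{D}$ via the relation $(\mathcal{D}+m-n)F_{k-1,s}(u,v)\in L_{n,m}(V)$, which produces the extra term $F_{k-1,s}(u,\mathcal{D}v)$ that you correctly absorb with the induction hypothesis, and you recover the $t=0$ case from the identity $F_{k,0}=F_{k,1}-F_{k-1,0}$. Both routes are valid and of comparable length; the paper's upfront reduction of $s$ is slightly cleaner, while your use of $L_{n,m}(V)$ makes the argument self-contained at the level of the definition of $\tilde{O}^{\prime}_{g,n,m}(V)$ without needing to observe that $V^r$ is $\mathcal{D}$-stable.
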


\begin{proof}
We begin by making the change of variables $y = e^x - 1$, then
\begin{align*}
&\quad\,\operatorname{Res}_x \frac{e^{x(\delta_{\bar{m}}(r)+\lfloor m\rfloor+r / T+s)}}{(e^x-1)^{\lfloor m\rfloor+\lfloor n\rfloor+\delta_{\bar{m}}(r)+\delta_{\bar{n}}(T-r)+1+k}} Y(u, x)v \\
&= \operatorname{Res}_y \frac{(1+y)^{\delta_{\bar{m}}(r)+\lfloor m\rfloor+r / T+s-1}}{y^{\lfloor m\rfloor+\lfloor n\rfloor+\delta_{\bar{m}}(r)+\delta_{\bar{n}}(T-r)+1+k}} Y(u, \log(1+y))v \\
&= \sum_{i=0}^s \binom{s}{i} \operatorname{Res}_y \frac{(1+y)^{\delta_{\bar{m}}(r)+\lfloor m\rfloor+r / T-1}}{y^{\lfloor m\rfloor+\lfloor n\rfloor+\delta_{\bar{m}}(r)+\delta_{\bar{n}}(T-r)+1+k-i}} Y(u, \log(1+y))v \\
&= \sum_{i=0}^s \binom{s}{i} \operatorname{Res}_x \frac{e^{x(\delta_{\bar{m}}(r)+\lfloor m\rfloor+r / T)}}{(e^x-1)^{\lfloor m\rfloor+\lfloor n\rfloor+\delta_{\bar{m}}(r)+\delta_{\bar{n}}(T-r)+1+k-i}} Y(u, x)v.
\end{align*}
Hence, it suffices to prove the statement for the case $s = 0$ and arbitrary $k \geq 0$. We proceed by induction on $k$.
The base case $k = 0$ follows directly from the definition of $\tilde{O}_{g,n,m}^\prime(V)$.
Assume the claim holds for all integers $k \leq l$. Consider $k = l + 1$. Using the inductive hypothesis and the  identity \eqref{eq:translation}, we compute:
\begin{align*}
& \operatorname{Res}_x \frac{e^{x(\delta_{\bar{m}}(r) + \lfloor m \rfloor + r/T)}}{(e^x - 1)^{\lfloor m\rfloor+\lfloor n \rfloor + \delta_{\bar{m}}(r) + \delta_{\bar{n}}(T - r) + 1 + l}} Y(\mathcal{D}u, x)v \\
= & \operatorname{Res}_x \frac{e^{x(\delta_{\bar{m}}(r) + \lfloor m \rfloor + r/T)}}{(e^x - 1)^{\lfloor m\rfloor+\lfloor n \rfloor + \delta_{\bar{m}}(r) + \delta_{\bar{n}}(T - r) + 1 + l}} \frac{d}{dx} Y(u, x)v \\
= & -\operatorname{Res}_x \left( \frac{d}{dx} \frac{e^{x(\delta_{\bar{m}}(r) + \lfloor m \rfloor + r/T)}}{(e^x - 1)^{\lfloor m\rfloor+\lfloor n \rfloor + \delta_{\bar{m}}(r) + \delta_{\bar{n}}(T - r) + 1 + l}} \right) Y(u, x)v \\
= & -(\delta_{\bar{m}}(r) + \lfloor m \rfloor + r/T) \operatorname{Res}_x \frac{e^{x(\delta_{\bar{m}}(r) + \lfloor m \rfloor + r/T)}}{(e^x - 1)^{\lfloor m\rfloor+\lfloor n \rfloor + \delta_{\bar{m}}(r) + \delta_{\bar{n}}(T - r) + 1 + l}} Y(u, x)v \\
& + (\lfloor m\rfloor+\lfloor n \rfloor + \delta_{\bar{m}}(r) + \delta_{\bar{n}}(T - r) + 1 + l) \operatorname{Res}_x \frac{e^{x(\delta_{\bar{m}}(r) + \lfloor m \rfloor + r/T+1)}}{(e^x - 1)^{\lfloor m\rfloor+\lfloor n \rfloor + \delta_{\bar{m}}(r) + \delta_{\bar{n}}(T - r) + 2 + l}} Y(u, x)v \\
= & (\lfloor n \rfloor + \delta_{\bar{n}}(T - r) + 1 - r/T + l) \operatorname{Res}_x \frac{e^{x(\delta_{\bar{m}}(r) + \lfloor m \rfloor + r/T)}}{(e^x - 1)^{\lfloor m\rfloor+\lfloor n \rfloor + \delta_{\bar{m}}(r) + \delta_{\bar{n}}(T - r) + 1 + l}} Y(u, x)v \\
& + (\lfloor m\rfloor+\lfloor n \rfloor + \delta_{\bar{m}}(r) + \delta_{\bar{n}}(T - r) + 1 + l) \operatorname{Res}_x \frac{e^{x(\delta_{\bar{m}}(r) + \lfloor m \rfloor + r/T)}}{(e^x - 1)^{\lfloor m\rfloor+\lfloor n \rfloor + \delta_{\bar{m}}(r) + \delta_{\bar{n}}(T - r) + 2 + l}} Y(u, x)v.
\end{align*}
By the induction hypothesis, the first term on the right-hand side belongs to $\tilde{O}_{g,n,m}^\prime(V)$. Since the left-hand side also lies in $\tilde{O}_{g,n,m}^\prime(V)$ (as $\mathcal{D}u \in V^r$), it follows that the second term must likewise belong to $\tilde{O}_{g,n,m}^\prime(V)$. This completes the inductive step and establishes the lemma.
\end{proof}
\begin{lem}\label{lem3.3}
The following statements hold:

{\rm(1)} For any $u, v \in V$, we have
    \[
    Y(u, x)v \equiv e^{x(n - m)}\, Y(v, -x)u  \bmod L_{n,m}(V).
    \]

{\rm(2)} If $r \not\equiv \bar{m} - \bar{n} \pmod{T}$, then $V^r \subseteq \tilde{O}_{g,n,m}^\prime(V)$.

{\rm(3)} Let $u \in V^r$ and $v \in V^s$. Suppose that
    $
    \bar{p} - \bar{n} \equiv r \pmod{T},  \bar{m} - \bar{p} \equiv s \pmod{T},
    $
    and that $m + n - p \geq 0$. Then
    \[
    u \bullet_{g, m, p}^{\,n} v - v \bullet_{g, m, m+n-p}^{\,n} u - \operatorname{Res}_x (1+x)^{-1 + p - n}\, Y(u, \log(1+x)) v \in L_{n,m}(V).
    \]
\end{lem}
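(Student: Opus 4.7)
For part~(1), I would combine the skew-symmetry identity~\eqref{skew_sym}, $Y(u,x)v = e^{x\mathcal{D}} Y(v,-x)u$, with the observation that $L_{n,m}(V)$ is $\mathcal{D}$-stable: since $\mathcal{D}(\mathcal{D}+m-n)w = (\mathcal{D}+m-n)\mathcal{D}w$, we have $\mathcal{D} L_{n,m}(V)\subseteq L_{n,m}(V)$. An induction on $k$ then gives $\mathcal{D}^k w \equiv (n-m)^k w \pmod{L_{n,m}(V)}$ for every $w\in V$, and summing coefficientwise in $x$ yields $e^{x\mathcal{D}} w \equiv e^{x(n-m)} w$ modulo $L_{n,m}(V)$. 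Applying this to each coefficient of $Y(v,-x)u$ gives~(1).

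For part~(2), I would consider the element $u \diamond_{g,m}^{\,n} \mathbf{1}$, which lies in $\tilde{O}_{g,n,m}'(V)$ by definition. Using $Y(u,x)\mathbf{1} = e^{x\mathcal{D}} u$ together with part~(1), substituting into the defining residue and performing the change of variable $y = e^x-1$ identifies this element, modulo $L_{n,m}(V)$, with the scalar $\binom{\alpha-1}{\beta-1}\, u$, where
\[
\alpha = \delta_{\bar{m}}(r) + \lfloor n\rfloor + \frac{r+\bar{n}-\bar{m}}{T}, \qquad
\beta = \lfloor m\rfloor + \lfloor n\rfloor + \delta_{\bar{m}}(r) + \delta_{\bar{n}}(T-r) + 1.
\]
The hypothesis $r\not\equiv \bar{m}-\bar{n}\pmod{T}$ forces $(r+\bar{n}-\bar{m})/T \notin \mathbb{Z}$, hence $\alpha-1\notin\mathbb{Z}$; consequently no factor in the falling factorial $(\alpha-1)(\alpha-2)\cdots(\alpha-\beta+1)$ vanishes and the binomial is nonzero. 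Therefore $u\in\tilde{O}_{g,n,m}'(V)$.

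For part~(3), my plan is to apply part~(1) inside the integrand of $A := u\bullet_{g,m,p}^{\,n} v$ to replace $Y(u,\log(1+y))v$ by $(1+y)^{n-m} Y(v,-\log(1+y))u$ modulo $L_{n,m}(V)$, and then to perform the change of variable $w = -y/(1+y)$, under which $1+y = (1+w)^{-1}$, $-\log(1+y)=\log(1+w)$, and $\Res_y$ transforms as $-\Res_w (1+w)^{-2}$. Collecting the new exponents of $(1+w)$ and $w$, and using $\bar{p}-\bar{n}\equiv r$, $\bar{m}-\bar{p}\equiv s\pmod T$ to reconcile the fractional parts $r/T, s/T$, the transformed sum representing $A$ matches a truncation of the sum defining $B := v\bullet_{g,m,m+n-p}^{\,n} u$. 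A Vandermonde/Pascal-type binomial identity then assembles the range mismatch between the sums of lengths $\lfloor p\rfloor+1$ and $\lfloor m+n-p\rfloor+1$ into a single boundary residue, which simplifies to $C := \Res_x (1+x)^{-1+p-n} Y(u,\log(1+x))v$; thus $A-B-C\in L_{n,m}(V)$.

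The main obstacle is in part~(3): tracking binomial coefficients through the nonlinear substitution $w=-y/(1+y)$ while the exponent of $(1+w)$ depends on the summation index $i$, and identifying the range-mismatch as precisely $C$, requires careful bookkeeping of the fractional exponents and of the truncation bounds $\lfloor p\rfloor$ and $\lfloor m+n-p\rfloor$ — especially since $\bar{m}-\bar{n}-r$ may equal $s$, $s-T$, or $s-2T$, and the three cases must be treated uniformly by the $\delta$-notation.
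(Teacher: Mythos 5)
Your proposal follows essentially the same route as the paper for all three parts: skew-symmetry combined with the congruence $\mathcal{D}\equiv n-m \pmod{L_{n,m}(V)}$ for (1), evaluation of $u\diamond_{g,m}^{\,n}\mathbf{1}$ as a binomial-coefficient multiple of $u$ whose top entry is non-integral (hence nonzero) for (2), and for (3) the conversion of one product into the other via part (1) together with the substitution $z\mapsto -z/(1+z)$, followed by a combinatorial identity that collapses the range mismatch into the residue term $C$. The one step to tighten is the unspecified ``Vandermonde/Pascal-type identity'' concluding (3): the paper writes out the leftover kernel difference $A(z)$ explicitly and disposes of it by citing \cite[Proposition~5.1]{DJ1}, so you should either invoke that result or prove the identity rather than leave it as a named placeholder.
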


\begin{proof}
    (1) By definition, $(\mathcal{D} + m - n)u \in L_{n,m}(V)$. Combining this with the skew-symmetry identity~\eqref{skew_sym} yields
    \[
    Y(u, x)v = e^{x\mathcal{D}} Y(v, -x)u \equiv e^{x(n - m)} Y(v, -x)u  \bmod L_{n,m}(V),
    \]
    as required.

    (2) Let $u \in V^r$ with $r \not\equiv \bar{m} - \bar{n} \pmod{T}$. Then
    \begin{align*}
        u \diamond_{g,m}^{\,n} \mathbf{1}
        &= \operatorname{Res}_{y} \frac{(1+y)^{-1 + \delta_{\bar{m}}(r) + \lfloor m \rfloor + r/T}}{y^{\lfloor m \rfloor + \lfloor n \rfloor + \delta_{\bar{m}}(r) + \delta_{\bar{n}}(T - r) + 1}}\, Y(u, \log(1+y)) \mathbf{1} \\
        &\equiv \operatorname{Res}_{y} \frac{(1+y)^{\delta_{\bar{n}}(r) + \lfloor n \rfloor + r/T - 1 + n - m}}{y^{2\lfloor n \rfloor + \delta_{\bar{n}}(r) + \delta_{\bar{n}}(T - r) + 1}}\, u 
         \bmod \tilde{O}_{g,n,m}^\prime(V) \qquad \text{(by part (1))} \\
        &= \binom{\delta_{\bar{n}}(r) + \lfloor n \rfloor + r/T - 1 + n - m}{\,2\lfloor n \rfloor + \delta_{\bar{n}}(r) + \delta_{\bar{n}}(T - r)\,} u.
    \end{align*}
    Since $r \not\equiv \bar{m} - \bar{n} \pmod{T}$, the binomial coefficient above is nonzero. On the other hand, by definition we have $u \diamond_{g,m}^{\,n} \mathbf{1} \in \tilde{O}_{g,n,m}^\prime(V)$. It follows that $u \in \tilde{O}_{g,n,m}^\prime(V)$, which completes the proof of part~(2).

(3) From the assumptions $\bar{p} - \bar{n} \equiv r \pmod{T}$ and $\bar{m} - \bar{p} \equiv s \pmod{T}$, one readily deduces that
$
-1 + \delta_{\bar{m}}(s) + \delta_{\bar{p}}(T - s) = 0,
-1 + \delta_{\bar{m}}(r) + \delta_{\bar{n}}(T - r) = \varepsilon,
$
where
\[
\varepsilon =
\begin{cases}
\phantom{-}1, & \text{if } \bar{m} + \bar{n} - \bar{p} \geq T, \\
\phantom{-}0, & \text{if } 0 \leq \bar{m} + \bar{n} - \bar{p} < T, \\
-1,           & \text{if } \bar{m} + \bar{n} - \bar{p} < 0.
\end{cases}
\]
Hence,
\begin{align*}
&\quad\, v \bullet_{g,\, m,\, m+n-p}^{\,n} u \\
&= \sum_{i=0}^{\lfloor n \rfloor + \lfloor m \rfloor - \lfloor p \rfloor + \varepsilon}
(-1)^i \binom{\lfloor p \rfloor + i}{i}
\operatorname{Res}_z \frac{(1+z)^{-1 + \lfloor m \rfloor + \delta_{\bar{m}}(s) + s/T}}{z^{\lfloor p \rfloor + i + 1}}
\, Y(v, \log(1+z))\, u \\
&\equiv \sum_{i=0}^{\lfloor n \rfloor + \lfloor m \rfloor - \lfloor p \rfloor + \varepsilon}
(-1)^i \binom{\lfloor p \rfloor + i}{i}
\operatorname{Res}_z \frac{(1+z)^{-1 + \lfloor n \rfloor + \delta_{\bar{m}}(s) + (s - \bar{m} + \bar{n})/T}}{z^{\lfloor p \rfloor + i + 1}} \\
&\qquad\qquad\qquad\qquad\qquad\qquad
\times Y(u, -\log(1+z))\, v
\bmod L_{n,m}(V) \quad\quad  \text{(by part (1))} \\
&= \sum_{i=0}^{\lfloor n \rfloor + \lfloor m \rfloor - \lfloor p \rfloor + \varepsilon}
(-1)^{\lfloor p \rfloor} \binom{\lfloor p \rfloor + i}{i}
\operatorname{Res}_z \frac{(1+z)^{-1 + p-n + i}}{z^{\lfloor p \rfloor + i + 1}}
\, Y(u, \log(1+z))\, v,
\end{align*}
where the last equality follows from $(\bar{p} + s - \bar{m})/T = \delta_{\bar{p}}(T - s)$.
Since $(\bar{n} + r - \bar{p})/T = \delta_{\bar{n}}(T - r)$, we obtain
\begin{align*}
    u \bullet_{g,\, m,\, p}^{\,n} v - v \bullet_{g,\, m,\, m+n-p}^{\,n} u 
\equiv  \operatorname{Res}_z A(z)\,
(1+z)^{-1 + p - n}\, Y(u, \log(1+z))\, v
\bmod L_{n,m}(V), 
\end{align*}
where
\begin{align*}
A(z)
= \sum_{i=0}^{\lfloor p \rfloor}
(-1)^i \binom{k + \varepsilon + i}{i}
\frac{(1+z)^{k + \varepsilon + 1}}{z^{k + \varepsilon + i + 1}}  - \sum_{i=0}^{k + \varepsilon}
(-1)^{\lfloor p \rfloor} \binom{\lfloor p \rfloor + i}{i}
\frac{(1+z)^i}{z^{\lfloor p \rfloor + i + 1}}
\end{align*}
 and $k=\lfloor n \rfloor + \lfloor m \rfloor - \lfloor p \rfloor$. The lemma now follows from \cite[Proposition 5.1]{DJ1}.
\end{proof}

\begin{cor}\label{cor3.4}
The following statements hold:

{\rm(1)} Let $u \in V^r$ and $v \in V^s$. If $\bar{m} - \bar{p} \not\equiv s \pmod{T}$, then 
    $
    u \bullet_{g, m, p}^{\,n} v \in \tilde{O}_{g, n, m}^{\prime}(V).
    $

{\rm(2)} Let $v \in V^s$,
if $\bar{m} - \bar{n} \equiv s \pmod{T}$, then $$v \bullet_{g, m}^{\,n} \mathbf{1} - v \in L_{n,m}(V).
$$
Moreover,
    $
    v \bullet_{g, m}^{\,n} \mathbf{1} - v \in \tilde{O}_{g, n, m}^{\prime}(V)
    $
    for any $v\in V.$
\end{cor}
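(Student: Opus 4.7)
The plan is to reduce both parts of the corollary to Lemma~\ref{lem3.3}. For part~(1), first observe that if $\bar p - \bar n \not\equiv r \pmod T$ or if one of $m,n,p$ is negative, then $u\bullet_{g,m,p}^{\,n}v = 0$ by definition and the conclusion is immediate. Otherwise $\bar p - \bar n \equiv r \pmod T$, which combined with the hypothesis $\bar m - \bar p \not\equiv s \pmod T$ gives $\bar m - \bar n \not\equiv r+s \pmod T$. Since $u_i v \in V^{r+s}$ for every $i$, every coefficient of $Y(u,\log(1+y))v$ and hence every element arising after taking residue in $y$ lies in $V^{r+s}$. By Lemma~\ref{lem3.3}(2), $V^{r+s} \subseteq \tilde O_{g,n,m}'(V)$, so $u\bullet_{g,m,p}^{\,n}v \in \tilde O_{g,n,m}'(V)$.

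For the first assertion of part~(2), I would apply Lemma~\ref{lem3.3}(3) with its $u$ taken to be our $v\in V^s$, its $v$ taken to be $\mathbf 1 \in V^0$, and the auxiliary index $p := m$. The hypotheses are satisfied: $\bar p - \bar n = \bar m - \bar n \equiv s \pmod T$ by assumption, $\bar m - \bar p = 0$ matches the type of $\mathbf 1$, and $m + n - p = n \geq 0$. Substituting $v\bullet_{g,m,m}^n \mathbf 1 = v\bullet_{g,m}^n \mathbf 1$ and $\mathbf 1 \bullet_{g,m,n}^n v = \mathbf 1 \bar\bullet_{g,m}^n v = v$, the lemma yields
\[
v \bullet_{g,m}^n \mathbf 1 - v - \operatorname{Res}_x (1+x)^{-1+m-n}\, Y(v,\log(1+x))\,\mathbf 1 \in L_{n,m}(V).
\]
The crucial observation is that the residue term is zero: since $Y(v,\log(1+x))\mathbf 1 = e^{\log(1+x)\mathcal D}v \in V[[x]]$ and $(1+x)^{-1+m-n} \in \mathbb C[[x]]$ are honest power series in $x$, their product has no $x^{-1}$-coefficient. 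Hence $v\bullet_{g,m}^n \mathbf 1 - v \in L_{n,m}(V)$.

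For the moreover part, decompose an arbitrary $v \in V$ as $v = \sum_{s=0}^{T-1} v^s$ with $v^s \in V^s$. For indices $s$ satisfying $\bar m - \bar n \equiv s \pmod T$, the first assertion of~(2) gives $v^s \bullet_{g,m}^n \mathbf 1 - v^s \in L_{n,m}(V) \subseteq \tilde O_{g,n,m}'(V)$. For the remaining $s$, the defining convention of $\bullet_{g,m,m}^n$ forces $v^s \bullet_{g,m}^n \mathbf 1 = 0$ (the required congruence fails), while Lemma~\ref{lem3.3}(2) gives $v^s \in V^s \subseteq \tilde O_{g,n,m}'(V)$; hence $v^s \bullet_{g,m}^n \mathbf 1 - v^s \in \tilde O_{g,n,m}'(V)$ as well. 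Summing over $s$ yields the result. The only subtle step in the whole argument is the vanishing of the residue correction term in the application of Lemma~\ref{lem3.3}(3); everything else is bookkeeping with the parity conditions $\bar m - \bar p \equiv s \pmod T$ and the definitions of the various products.
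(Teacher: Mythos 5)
Your proof is correct and follows essentially the same route as the paper: part~(1) reduces to Lemma~\ref{lem3.3}(2) via the combined congruence $\bar m-\bar n\not\equiv r+s$, and part~(2) is an application of Lemma~\ref{lem3.3}(3) to the pair $(v,\mathbf 1)$ with the residue correction vanishing. The only cosmetic difference is that in part~(2) you take $(u,v,p)=(v,\mathbf 1,m)$ where the paper takes $(\mathbf 1,v,n)$; since the lemma symmetrically relates $u\bullet_{g,m,p}^{\,n}v$ to $v\bullet_{g,m,m+n-p}^{\,n}u$, these are the same application read from opposite sides, and your justification that $\operatorname{Res}_x(1+x)^{-1+m-n}Y(v,\log(1+x))\mathbf 1=0$ is valid.
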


\begin{proof}
(1) By definition, the product $u \bullet_{g, m, p}^{\,n} v$ is nonzero only if $\bar{p} - \bar{n} \equiv r \pmod{T}$.  
If this condition fails, then $u \bullet_{g, m, p}^{\,n} v = 0 \in \tilde{O}_{g, n, m}^{\prime}(V)$.  
If it holds, then $u \bullet_{g, m, p}^{\,n} v \in V^{r+s}$. The assumption $\bar{m} - \bar{p} \not\equiv s \pmod{T}$ implies
\[
\bar{m} - \bar{n} = (\bar{m} - \bar{p}) + (\bar{p} - \bar{n}) \not\equiv s + r \pmod{T}.
\]
Hence $r + s \not\equiv \bar{m} - \bar{n} \pmod{T}$, and by Lemma~\ref{lem3.3}(2) we conclude that $u \bullet_{g, m, p}^{\,n} v \in \tilde{O}_{g, n, m}^{\prime}(V)$.

(2) Applying Lemma~\ref{lem3.3}(3) with $u = \mathbf{1} \in V^0$ and $p=n$, and using the identity $\mathbf{1} \bar{\bullet}_{g, m}^{\,n} v = v$, we obtain
\[
v \bullet_{g, m}^{\,n} \mathbf{1} - v = v \bullet_{g, m}^{\,n} \mathbf{1} - \mathbf{1} \bar{\bullet}_{g, m}^{\,n} v \in L_{n,m}(V).
\]
Since $L_{n,m}(V) \subseteq \tilde{O}_{g, n, m}^{\prime}(V)$ by definition, it follows that $v \bullet_{g, m}^{\,n} \mathbf{1} - v \in \tilde{O}_{g, n, m}^{\prime}(V)$.  
Moreover, if $\bar{m} - \bar{n} \not\equiv s \pmod{T}$, then by part (2) of Lemma~\ref{lem3.3}, $u \in \tilde{O}_{g, n, m}^{\prime}(V)$, and since $u \bullet_{g, m}^{\,n} \mathbf{1} \in \tilde{O}_{g, n, m}^{\prime}(V)$ as well, their difference lies in $\tilde{O}_{g, n, m}^{\prime}(V)$. 
\end{proof}
\begin{lem}\label{lem3.5}
We have
\[
\tilde{O}_{g, n, m}^{\prime}(V) \,\bullet_{g, m}^{\,n}\, V \subseteq \tilde{O}_{g, n, m}^{\prime}(V),
\qquad V \,\bar{\bullet}_{g, m}^{\,n}\, \tilde{O}_{g, n, m}^{\prime}(V) \subseteq \tilde{O}_{g, n, m}^{\prime}(V).
\]
\end{lem}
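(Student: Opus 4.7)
The plan is to reduce each inclusion to a generator-level check. Since $\tilde{O}_{g,n,m}'(V) = L_{n,m}(V) + \operatorname{span}\{u \diamond_{g,m}^n v\}$, four statements must be verified: (i) $L_{n,m}(V)\bullet_{g,m}^n V \subseteq \tilde{O}_{g,n,m}'(V)$, (ii) $(u\diamond_{g,m}^n v)\bullet_{g,m}^n w \in \tilde{O}_{g,n,m}'(V)$, (iii) $V\,\bar{\bullet}_{g,m}^n L_{n,m}(V) \subseteq \tilde{O}_{g,n,m}'(V)$, and (iv) $u\,\bar{\bullet}_{g,m}^n (v\diamond_{g,m}^n w) \in \tilde{O}_{g,n,m}'(V)$. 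By Corollary~\ref{cor3.4}(1) and Lemma~\ref{lem3.3}(2), one may further assume all inputs are $g$-homogeneous and that their grading labels $r$ satisfy the modular conditions for which the $\bullet$- and $\bar{\bullet}$-products are nonzero; otherwise the relevant products either vanish or the offending factor already lies in $\tilde{O}_{g,n,m}'(V)$.

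For parts (i) and (iii), I would use residue integration by parts. Writing the product as a residue in $y = e^x - 1$ against the rational generating function appearing in the definition of $\bullet_{g,m}^n$ (resp.\ $\bar{\bullet}_{g,m}^n$), and replacing $u$ by $\mathcal{D}u$, the translation identity $Y(\mathcal{D}u,x)=\frac{d}{dx}Y(u,x)$ transfers a derivative onto that rational function. Its logarithmic derivative splits into a part proportional to the original $\bullet$-product (from the numerator exponent) and a part whose pole order at $y=0$ is raised by one (absorbed by Lemma~\ref{prop1}). Adding in the explicit $(m-n)u$-contribution from $L_{n,m}(V)$ produces exactly the shift needed for the surviving coefficient to be consistent with the definition of $\tilde{O}_{g,n,m}'(V)$.

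For parts (ii) and (iv), the key tool is the Jacobi identity of $V$, which expresses $Y(Y(u,x_0)v,x_2)$ as a formal-distribution difference of $Y(u,x_1)Y(v,x_2)$ and $Y(v,x_2)Y(u,x_1)$. Substituting this into the defining residues of $(u\diamond_{g,m}^n v)\bullet_{g,m}^n w$ and $u\,\bar{\bullet}_{g,m}^n(v\diamond_{g,m}^n w)$ collapses the double vertex product to a single one. Expanding the resulting rational generating function by partial fractions, each summand is either (a) of the form prescribed by $\diamond_{g,m}^n$ applied to one of $u,v$ and a second vector of $V$, (b) of high enough pole order to be covered by Lemma~\ref{prop1}, or (c) cancelled against the commuted term via the skew-symmetry modulo $L_{n,m}$ provided by Lemma~\ref{lem3.3}(1).

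The main obstacle is the combinatorial bookkeeping: one must simultaneously track $\lfloor m\rfloor$, $\lfloor n\rfloor$, $\delta_{\bar m}(r)$ and $\delta_{\bar n}(T-r)$, and verify that the binomial identities produced by the partial-fraction decomposition match the precise indices appearing in the definitions of $\diamond_{g,m}^n$ and $\bullet_{g,m}^n$ after every substitution. This bookkeeping is essentially the $(n,m)$-generalisation of the symmetric case $n=m$ treated in~\cite{Shun1}; the proof schema is structurally identical, but the $(m-n)$-shift coming from $L_{n,m}(V)$ must be threaded through every residue computation rather than collapsing to zero.
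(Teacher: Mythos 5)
Your plan follows the paper's proof essentially step for step: reduce to the two families of generators of $\tilde{O}_{g,n,m}'(V)$, use Corollary~\ref{cor3.4}(1) and Lemma~\ref{lem3.3}(2) to restrict to the homogeneous components where the products are nonzero, handle the $L_{n,m}(V)$ generators by integration by parts against the rational generating function (with the $(m-n)$ term supplying exactly the shift that makes the surviving contribution a multiple of $u\diamond_{g,m}^n v$), and handle the $\diamond$ generators by expanding $Y(Y(u_1,x_0)u_2,x_2)$ via the Jacobi identity and absorbing the raised pole orders with Lemma~\ref{prop1}. The one place where your plan as written would stumble is case (iii): in $v\,\bar{\bullet}_{g,m}^n\,((\mathcal{D}+m-n)u)$ the operator $\mathcal{D}$ sits on the \emph{second} argument, i.e.\ on the vector being acted upon rather than on the field insertion, so the translation identity $Y(\mathcal{D}u,x)=\tfrac{d}{dx}Y(u,x)$ does not directly transfer a derivative onto the rational function; the paper circumvents this by first invoking Lemma~\ref{lem3.3}(3) to swap the arguments (reducing (iii) to (i) plus a residue term that vanishes by the same integration by parts), and the analogous swap is also how it begins case (iv). This is a small routing correction rather than a conceptual gap, since the needed skew-symmetry is already in your toolkit.
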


\begin{proof}

The proof is analogous to that of \cite[Lemma~3.3]{Shun1}. 
We consider the expressions 
\[
((\mathcal{D} + m - n) u) \bullet_{g, m}^n v
\quad\text{and}\quad
v \,\bar{\bullet}_{g, m}^n\, ((\mathcal{D} + m - n) u).
\]
Assume that $u \in V^r$ and $v \in V^s$, where $\bm - \bn \equiv r \pmod{T}$ and $s = 0$, by the definition of $\bullet_{g, m}^n$, $\bar{\bullet}_{g, m}^n$ and Corollary~\ref{cor3.4}(1). 
Under these assumptions, we have
$
-1 + \delta_{\bm}(r) + \delta_{\bn}(T - r) = 0
$ and $(\bar{n}+r-\bar{m})/T=\delta_{\bar{n}}(T-r)$.
Then
\begin{align*}
    &((\D+m-n) u) \bullet_{g, m}^n v\\
    =&\sum_{i=0}^{\lfloor m\rfloor}(-1)^{i}\binom{\lfloor n\rfloor+i}{i} \operatorname{Res}_{y} \frac{(1+y)^{-1+\lfloor m\rfloor+\delta_{\bar{m}}(r)+r / T}}{y^{\lfloor n\rfloor+i+1}}
Y(\D u, \log(1+y)) v+(m-n)u\bullet_{g,m}^nv\\
=&\sum_{i=0}^{\lfloor m\rfloor}(-1)^{i}\binom{\lfloor n\rfloor+i}{i} \operatorname{Res}_{y} \frac{(1+y)^{\lfloor m\rfloor+\delta_{\bar{m}}(r)+r / T}}{y^{\lfloor n\rfloor+i+1}}\frac{d}{dy}
(Y( u, \log(1+y)) v)+(m-n)u\bullet_{g,m}^nv\\
=&\sum_{i=0}^{\lfloor m\rfloor}(-1)^{i+1}\binom{\lfloor n\rfloor+i}{i} \operatorname{Res}_{y}\frac{d}{dy} \left(\frac{(1+y)^{\lfloor m\rfloor+\delta_{\bar{m}}(r)+r / T}}{y^{\lfloor n\rfloor+i+1}}\right)
\\
&\cdot Y( u, \log(1+y)) v)+(m-n)u\bullet_{g,m}^nv\\
=&\sum_{i=0}^{\lfloor m\rfloor}(-1)^{i}\binom{\lfloor n\rfloor+i}{i} \operatorname{Res}_{y}
\frac{(\xn+i+1+iy)(1+y)^{\lfloor m\rfloor+\delta_{\bar{m}}(r)+r / T-1}}{y^{\xn+i+2}}Y( u, \log(1+y)) v).
\end{align*}
It is straightforward to show that
\begin{align*}
    &\sum_{i=0}^{\lfloor m\rfloor}(-1)^{i}\binom{\lfloor n\rfloor+i}{i}  
\frac{\xn+i+1+iy}{y^{\xn+i+2}}\\
=& \sum_{i=0}^{\lfloor m\rfloor}(-1)^{i}\binom{\lfloor n\rfloor+i}{\xqz{n}}  
\frac{i}{y^{\xn+i+1}}+ \sum_{i=0}^{\lfloor m\rfloor}(-1)^{i}\binom{\lfloor n\rfloor+i+1}{\xn}  
\frac{i+1}{y^{\xn+i+2}}\\
=& \sum_{i=0}^{\lfloor m\rfloor-1}(-1)^{i+1}\binom{\lfloor n\rfloor+i+1}{\xqz{n}}  
\frac{i+1}{y^{\xn+i+2}}+ \sum_{i=0}^{\lfloor m\rfloor}(-1)^{i}\binom{\lfloor n\rfloor+i+1}{\xn}  
\frac{i+1}{y^{\xn+i+2}}\\
=&(-1)^{\xm}\binom{\xn+\xm+1}{\xn}\frac{\xm+1}{y^{\xn+\xm+2}}.
\end{align*}
Then $((\D+m-n)u)\bullet_{g,m}^nv=(-1)^{\xm}\binom{\xn+\xm+1}{\xn}(\xm+1)u\diamond_{g,m}^nv\in \tilde{O}_{g,n,m}^\prime(V).$ 
By Lemma~\ref{lem3.3}(3), then
\begin{align*}
    &-v\bar{\bullet}_{g,m}^n((\D+m-n)u)
    \equiv-((\D+m-n)u)\bullet_{g,m}^nv\\
    &+\Res_x(1+x)^{-1+m-n} Y((\D+m-n)u,\log(1+x))v\bmod \tilde{O}_{g,n,m}^\prime(V)\\
    &\equiv \Res_x(1+x)^{-1+m-n}Y((\D+m-n)u,\log(1+x))v=0.
\end{align*}
Let $u_i \in V^{r_i}$ for $i = 1, 2, 3$.  
By the definition of $\bullet_{g, m}^n$ and Corollary~\ref{cor3.4}(1), we have
$
\bigl(u_1 \diamond_{g, m}^n u_2\bigr) \bullet_{g, m}^n u_3 \in \tilde{O}_{g, n, m}^{\prime}(V),
$
whenever $\bm - \bn \not\equiv r_1 + r_2 \pmod{T}$ or $r_3 \neq 0$.  
Hence, we may assume that
$
\bm - \bn \equiv r_1 + r_2 \pmod{T}$ and  $r_3 = 0.
$
Define the functions
\[
f(r) = \xqz{m} + \delta_{\bar{m}}(r) + {r}/{T},
\quad
h(r) = \xqz{m} + \xqz{n} + \delta_{\bar{m}}(r) + \delta_{\bar{n}}(T - r) + 1.
\]
Choose $s \in \{0, 1, \dots, T-1\}$ such that $s \equiv r_1 + r_2 \pmod{T}$.  
Then the following identities hold:
\begin{align*}
    &\delta_{\bar{m}}(r_1) + \delta_{\bar{n}}(T - r_1) 
      = \delta_{\bar{m}}(r_2) + \delta_{\bar{n}}(T - r_2), \\
    &\delta_{\bar{m}}(s) + \delta_{\bar{n}}(T - r_1) 
      = \delta_{\bar{m}}(r_2) - {(s - r_1 - r_2)}/{T}, \\
    &  \delta_{\bar{m}}(r_2) - \delta_{\bar{n}}(T - r_1) + (\bar{n} + r_1 + r_2-\bar{m}  )/T=1.
\end{align*}
From these, we deduce the relations
\begin{equation}\label{zuheeq4}
    h(r_1) = h(r_2), \quad
    h(r_1) - f(r_1) + f(s) = f(r_2) + \xqz{n} + 1, \quad
    h(r_1) - f(r_1) +m-n= f(r_2).
\end{equation}
Then we have
\begin{align*}
    &(u_1\diamond_{g,m}^n u_2) \bullet_{g,m}^nu_3\\
    =&\sum_{i=0}^{\xqz{m}}(-1)^i\binom{i+\xqz{n}}{i}\Res_{x_2}\Res_{x_0}
\frac{e^{x_2f(s)}}{(e^{x_2}-1)^{\xqz{n}+i+1}}\frac{e^{x_0f(r_1)}}{(e^{x_0}-1)^{h(r_1)}}Y(Y(u_1,x_0)u_2,x_2)u_3\\
    =&\sum_{i=0}^{\xqz{m}}(-1)^i\binom{i+\xqz{n}}{i}\Res_{x_1}\Res_{x_2}\Res_{x_0}
\frac{e^{x_2f(s)}}{(e^{x_2}-1)^{\xqz{n}+i+1}}\frac{e^{x_0f(r_1)}}{(e^{x_0}-1)^{h(r_1)}}\\
    &\quad\cdot x_1^{-1}\delta\left(
    \frac{x_2+x_0}{x_1}\right) Y(Y(u_1,x_0)u_2,x_2)u_3\\
    =&\sum_{i=0}^{\xqz{m}}(-1)^i\binom{i+\xqz{n}}{i}\Res_{x_1}\Res_{x_2}\Res_{x_0}
\frac{e^{x_2f(s)}}{(e^{x_2}-1)^{\xqz{n}+i+1}}\frac{e^{x_0f(r_1)}}{(e^{x_0}-1)^{h(r_1)}}\\
&\quad\cdot x_0^{-1} \delta\left(\frac{x_1-x_2}{x_0}\right) Y\left(u_1, x_1\right) Y\left( u_2, x_2\right) u_3\\
&-\sum_{i=0}^{\xqz{m}}(-1)^i\binom{i+\xqz{n}}{i}\Res_{x_1}\Res_{x_2}\Res_{x_0}
\frac{e^{x_2f(s)}}{(e^{x_2}-1)^{\xqz{n}+i+1}}\frac{e^{x_0f(r_1)}}{(e^{x_0}-1)^{h(r_1)}}\\
&\quad\cdot x_0^{-1} \delta\left(\frac{x_2-x_1}{-x_0}\right) Y\left(u_2, x_2\right) Y\left(u_1, x_1\right) u_3\\
=&\sum_{i=0}^{\xqz{m}}(-1)^i\binom{i+\xqz{n}}{i}\Res_{x_1}\Res_{x_2}
\frac{e^{x_2f(s)}}{(e^{x_2}-1)^{\xqz{n}+i+1}}\frac{e^{(x_1-x_2)f(r_1)}}{(e^{(x_1-x_2)}-1)^{h(r_1)}}\\
&\quad\cdot  Y\left( u_1, x_1\right) Y\left(u_2, x_2\right) u_3\\
&-\sum_{i=0}^{\xqz{m}}(-1)^i\binom{i+\xqz{n}}{i}\Res_{x_1}\Res_{x_2}
\frac{e^{x_2f(s)}}{(e^{x_2}-1)^{\xqz{n}+i+1}}\frac{e^{(-x_2+x_1)f(r_1)}}{(e^{(-x_2+x_1)}-1)^{h(r_1)}}\\
&\quad\cdot  Y\left(u_2, x_2\right) Y\left( u_1, x_1\right) u_3\\
=&\sum_{i=0}^{\xqz{m}}(-1)^i\binom{i+\xqz{n}}{i}\Res_{x_1}\Res_{x_2}
\frac{e^{x_2f(s)}}{(e^{x_2}-1)^{\xqz{n}+i+1}}\frac{e^{(x_1-x_2)f(r_1)}}{(e^{(x_1-x_2)}-e^{-x_2}+e^{-x_2}-1)^{h(r_1)}}\\
&\quad\cdot  Y\left( u_1, x_1\right) Y\left(u_2, x_2\right) u_3\\
&-\sum_{i=0}^{\xqz{m}}(-1)^i\binom{i+\xqz{n}}{i}\Res_{x_1}\Res_{x_2}
\frac{e^{x_2f(s)}}{(e^{x_2}-1)^{\xqz{n}+i+1}}\frac{e^{(-x_2+x_1)f(r_1)}}{(e^{(-x_2+x_1)}-e^{x_1}+e^{x_1}-1)^{h(r_1)}} \\
&\quad\cdot Y\left(u_2, x_2\right) Y\left( u_1, x_1\right) u_3\\
=&\sum_{i=0}^{\xqz{m}}\sum_{k\geq0}(-1)^{i+k}\binom{i+\xqz{n}}{i}\binom{-h(r_1)}{k}\Res_{x_1}\Res_{x_2}\frac{e^{x_1f(r_1)}}{(e^{x_1}-1)^{h(r_1)+k}}\\
&\quad\cdot\frac{e^{x_2(f(s)+h(r_1)-f(r_1))}}{(e^{x_2}-1)^{\xqz{n}+i+1-k}} Y\left(u_1, x_1\right) Y\left( u_2, x_2\right) u_3\\
&-\sum_{i=0}^{\xqz{m}}\sum_{k\geq0}(-1)^{i+h(r_1)+k}\binom{i+\xqz{n}}{i}\binom{-h(r_1)}{k}\Res_{x_1}\Res_{x_2}
\frac{e^{x_2(f(r_2)+\xqz{n}+1+k)}}{(e^{x_2}-1)^{h(r_2)+\xqz{n}+i+k+1}}\\
&\quad\cdot e^{x_1(f(r_1)-h(r_1)-k)}(e^{x_1}-1)^k
Y\left(u_2, x_2\right) Y\left( u_1, x_1\right) u_3\quad\quad\quad\quad\quad\quad\quad\quad\quad\quad\text{(by  \eqref{zuheeq4})}\\
\equiv&\,0\bmod \tilde{O}_{g,n,m}^\prime(V).\quad\quad\quad\quad\quad\quad\quad\quad\quad\quad\quad\quad\quad\quad\quad\quad\quad\quad\quad\quad\quad\quad\text{(by Lemma~\ref{prop1})}
\end{align*}
Let $u_i \in V^{r_i}$ for $i = 1, 2, 3$.  
By the definition of $\bar{\bullet}_{g, m}^n$ and Corollary~\ref{cor3.4}(1), we have
$
u_3 \,\bar{\bullet}_{g, m}^n\, \bigl(u_1 \diamond_{g, m}^n u_2\bigr) \in \tilde{O}_{g, n, m}^{\prime}(V),
$
whenever $\bm - \bn \not\equiv r_1 + r_2 \pmod{T}$ or $r_3 \neq 0$.  
Therefore, we may assume that
$
\bm - \bn \equiv r_1 + r_2 \pmod{T}$ and  $r_3 = 0.
$
Under this assumption, we obtain
\begin{align*}
    &-u_3 \bar{\bullet}_{g,m}^n(u_1\diamond_{g,m}^n u_2)\\
    \equiv& \operatorname{Res}_{x_2}\operatorname{Res}_{x_0}\frac{e^{x_0f(r_1)}e^{x_2(m-n)}}{(e^{x_0}-1)^{h(r_1)}} Y\left( Y(u_1,x_0)u_2, x_2\right) u_3  \bmod \tilde{O}_{g, n,m}^\prime(V)\quad\quad \text{(by Lemma~ \ref{lem3.3}(3))} \\
    = &\operatorname{Res}_{x_2}\operatorname{Res}_{x_0}\operatorname{Res}_{x_1}\frac{e^{x_0f(r_1)}e^{x_2(m-n)}}{(e^{x_0}-1)^{h(r_1)}} x_1^{-1}\delta\left(\frac{x_2+x_0}{x_1}\right)Y\left( Y(u_1,x_0)u_2, x_2\right) u_3\\
    =&\operatorname{Res}_{x_2}\operatorname{Res}_{x_0}\operatorname{Res}_{x_1}\frac{e^{x_0f(r_1)}e^{x_2(m-n)}}{(e^{x_0}-1)^{h(r_1)}} x_0^{-1}\delta\left(\frac{x_1-x_2}{x_0}\right)Y(u_1,x_1)Y\left( u_2, x_2\right) u_3\\
    &\quad-\operatorname{Res}_{x_2}\operatorname{Res}_{x_0}\operatorname{Res}_{x_1}\frac{e^{x_0f(r_1)}e^{x_2(m-n)}}{(e^{x_0}-1)^{h(r_1)}} x_0^{-1}\delta\left(\frac{-x_2+x_1}{x_0}\right)Y\left( u_2, x_2\right) Y(u_1,x_1)u_3\\
    =& \operatorname{Res}_{x_2}\operatorname{Res}_{x_1}\frac{e^{(x_1-x_2)f(r_1)}e^{x_2(m-n)}}{(e^{(x_1-x_2)}-1)^{h(r_1)}} Y\left(u_1, x_1\right) Y\left( u_2, x_2\right) u_3\\
    &\quad-\operatorname{Res}_{x_2}\operatorname{Res}_{x_1}\frac{e^{(x_1-x_2)f(r_1)}e^{x_2(m-n)}}{(e^{(-x_2+x_1)}-1)^{h(r_1)}} Y\left(u_2, x_2\right) Y\left( u_1, x_1\right) u_3\\
    =& \operatorname{Res}_{x_2}\operatorname{Res}_{x_1}\frac{e^{(x_1-x_2)f(r_1)}e^{x_2(m-n)}}{(e^{(x_1-x_2)}-e^{-x_2}+e^{-x_2}-1)^{h(r_1)}} Y\left(u_1, x_1\right) Y\left( u_2, x_2\right) u_3\\
    &\quad-\operatorname{Res}_{x_2}\operatorname{Res}_{x_1}\frac{e^{(x_1-x_2)f(r_1)}e^{x_2(m-n)}}{(e^{(-x_2+x_1)}-e^{x_1}+e^{x_1}-1)^{h(r_1)}} Y\left(u_2, x_2\right) Y\left( u_1, x_1\right) u_3\\
    =&\sum_{k\geq0}(-1)^k\binom{-h(r_1)}{k}\Res_{x_1}\Res_{x_2}\frac{e^{x_1f(r_1)}}{(e^{x_1}-1)^{h(r_1)+k}}e^{x_2(h(r_1)-f(r_1)+m-n)}\\
    &\cdot(e^{x_2}-1)^k Y\left(u_1, x_1\right) Y\left( u_2, x_2\right) u_3\\
&-\sum_{k\geq0}(-1)^{h(r_1)+k}\binom{-h(r_1)}{k}\Res_{x_1}\Res_{x_2}
\frac{e^{x_2(f(r_2)+k)}}{(e^{x_2}-1)^{h(r_2)+k}}e^{x_1(f(r_1)-h(r_1)-k)}\\
&\cdot(e^{x_1}-1)^k Y\left(u_2, x_2\right) Y\left( u_1, x_1\right) u_3\quad\quad\quad\quad\quad\quad\quad\quad\quad\quad\quad\quad\quad\quad\quad\quad\quad\quad\text{(by \eqref{zuheeq4})}\\
\equiv&\,0\bmod \tilde{O}_{g,n,m}^\prime(V). \quad\quad\quad\quad\quad\quad\quad\quad\quad\quad\quad\quad\quad\quad\quad\quad\quad\quad\quad\quad\quad\quad\quad\text{(by  Lemma~\ref{prop1})}
\end{align*}

\end{proof}

\begin{lem}
    For any $u_1, u_2, u_3 \in V$, we have
    \[
        \bigl(u_1 \,\bar{\bullet}_{g, m}^n\, u_2\bigr) \bullet_{g, m}^n u_3
        \;-\;
        u_1 \,\bar{\bullet}_{g, m}^n\, \bigl(u_2 \bullet_{g, m}^n u_3\bigr)
        \;\in\; \tilde{O}_{g, n, m}^{\prime}(V).
    \]
\end{lem}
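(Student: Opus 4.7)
The plan is to follow the blueprint of the proof of Lemma~\ref{lem3.5}. I first reduce to homogeneous components $u_i \in V^{r_i}$. Since $u\,\bar{\bullet}_{g,m}^{\,n} v = 0$ whenever $u \notin V^0$, I may take $r_1 = 0$; by Corollary~\ref{cor3.4}(1) (applied to $\bullet_{g,m}^{\,n} = \bullet_{g,m,m}^{\,n}$, which lands in $\tilde O_{g,n,m}^{\prime}(V)$ as soon as the second argument lies outside $V^0$), together with Lemma~\ref{lem3.5}, I may further assume $r_3 = 0$. Finally, the vanishing condition $u\bullet_{g,m}^{\,n} v = 0$ unless $\bar m - \bar n \equiv r \pmod T$ (where $r$ is the $g$-degree of $u$) forces $\bar m - \bar n \equiv r_2 \pmod T$; in every other case both composites vanish outright. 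So the only case to handle is $r_1 = r_3 = 0$ and $\bar m - \bar n \equiv r_2 \pmod T$.

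In this surviving case, I would expand $(u_1\,\bar{\bullet}_{g,m}^{\,n} u_2) \bullet_{g,m}^{\,n} u_3$ as a nested residue in $(x_0, x_2)$ producing $Y(Y(u_1, x_0) u_2, x_2) u_3$, and $u_1\,\bar{\bullet}_{g,m}^{\,n} (u_2 \bullet_{g,m}^{\,n} u_3)$ as a nested residue in $(x_1, x_2)$ producing $Y(u_1, x_1) Y(u_2, x_2) u_3$. Inserting $x_1^{-1}\delta\!\left(\tfrac{x_2+x_0}{x_1}\right)$ and invoking the Jacobi identity of $V$ exactly as in the final displays of the proof of Lemma~\ref{lem3.5}, the first expansion splits into a combination of $Y(u_1, x_1) Y(u_2, x_2) u_3$ and $Y(u_2, x_2) Y(u_1, x_1) u_3$ weighted by denominators built from $e^{x_1 - x_2} - 1$ and $e^{-x_2 + x_1} - 1$. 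Rewriting these via $e^{x_1 - x_2} - 1 = e^{-x_2}\bigl((e^{x_1} - 1) - (e^{x_2} - 1)\bigr)$ and its analogue at $x_1$, and then binomially expanding the resulting geometric series, both sides become explicit linear combinations of $Y(u_1, x_1) Y(u_2, x_2) u_3$ and $Y(u_2, x_2) Y(u_1, x_1) u_3$ with coefficients rational in $e^{x_1} - 1$ and $e^{x_2} - 1$.

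Comparing the two expansions term by term, the difference should consist of terms whose $(e^{x}-1)$-denominator exponent strictly exceeds the threshold appearing in the definition of $\tilde O_{g,n,m}^{\prime}(V)$; each such term lies in $\tilde O_{g,n,m}^{\prime}(V)$ by Lemma~\ref{prop1}. In the specialized regime $r_1 = r_3 = 0$ and $\bar m - \bar n \equiv r_2 \pmod T$, the exponent identities analogous to~\eqref{zuheeq4} simplify considerably (essentially $h(0) = h(r_2)$ and $f(0) + h(r_2) - h(0) = f(r_2) - r_2/T$), which is exactly what is needed for the dominant exponents in the two expansions to align.

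The main obstacle will be the combinatorial bookkeeping: one must handle nested binomial sums with independent summation indices $0 \le i \le \xqz{m}$, $0 \le j \le \xqz{n}$ together with a formal expansion index $k \ge 0$, and verify a telescoping cancellation analogous to the one proved at the beginning of the Lemma~\ref{lem3.5} computation for $((\mathcal{D}+m-n)u)\bullet_{g,m}^{\,n} v$. Once that binomial identity is in place, Lemma~\ref{prop1} finishes the argument.
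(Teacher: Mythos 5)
Your proposal follows the paper's proof of this lemma essentially step for step: the same reduction to $r_1=r_3=0$ and $\bar m-\bar n\equiv r_2\pmod T$ (your justification via Corollary~\ref{cor3.4}(1) plus Lemma~\ref{lem3.5} is exactly the paper's), the same insertion of $x_1^{-1}\delta\bigl(\tfrac{x_2+x_0}{x_1}\bigr)$ and the Jacobi identity, the same rewriting $e^{x_1-x_2}-1=e^{-x_2}\bigl((e^{x_1}-1)-(e^{x_2}-1)\bigr)$ with binomial expansion, and the same use of Lemma~\ref{prop1} to kill the reversed-order piece $Y(u_2,x_2)Y(u_1,x_1)u_3$. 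The one place where your plan is thinner than the paper is the final identification of the surviving piece with $u_1\,\bar{\bullet}_{g,m}^n(u_2\bullet_{g,m}^n u_3)$: this is not "a telescoping cancellation analogous to" the single-variable identity used for $((\mathcal{D}+m-n)u)\bullet_{g,m}^n v$ in Lemma~\ref{lem3.5}. What is actually needed is a two-variable binomial identity collapsing the triple sum over $(m_1,m_2,k)$ (with $0\le k\le \lfloor n\rfloor-m_1$) back to the double sum over $(m_1,m_2)$, and the paper does not prove it from scratch but imports it as \cite[Proposition 5.2]{DJ1}. So your outline is structurally correct and would compile into the paper's argument, but you should either cite that proposition or be prepared to prove a genuinely harder combinatorial identity than the one you point to; also note that the difference is not "terms with too-large denominator exponent" — rather, one entire half of the Jacobi expansion is discarded by Lemma~\ref{prop1} and the other half equals the second composite on the nose after the Dong--Jiang identity.
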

\begin{proof}
    Let $u_i \in V^{r_i}$ for $i = 1, 2, 3$.  
By the definition of $\bullet_{g, m, p}^n$ and Lemma~\ref{lem3.5}, we have
\[
\bigl(u_1 \,\bar{\bullet}_{g, m}^n\, u_2\bigr) \bullet_{g, m}^n u_3
\;-\;
u_1 \,\bar{\bullet}_{g, m}^n\, \bigl(u_2 \bullet_{g, m}^n u_3\bigr)
\;\in\; \tilde{O}_{g, n, m}^{\prime}(V),
\]
whenever $r_1 \neq 0$, $\bar{m} - \bar{n} \not\equiv r_2 \pmod{T}$, or $r_3 \neq 0$.  
Therefore, we may assume that
$
r_1 = r_3 = 0$ and $\bar{m} - \bar{n} \equiv r_2 \pmod{T}.
$
Under this assumption, we proceed as follows.
    \begin{align*}
    &(u_1\bar{\bullet}_{g,m}^nu_2)\bullet_{g,m}^nu_3\\
        =&\sum_{m_1=0}^{\xqz{n}}\sum_{m_2=0}^{\xqz{m}}(-1)^{m_1+m_2}\binom{m_1+\xqz{m}}{\xqz{m}}\binom{m_2+\xqz{n}}{\xqz{n}}\\
&\cdot\Res_{x_2}\Res_{x_0}\frac{e^{x_2f(r_2)}}{(e^{x_2}-1)^{\xqz{n}+m_2+1}}\frac{e^{x_0(1+\xqz{m})}}{(e^{x_0}-1)^{\xqz{m}+m_1+1}} Y(Y(u_1,x_0)u_2,x_2)u_3\\
        =&\sum_{m_1=0}^{\xqz{n}}\sum_{m_2=0}^{\xqz{m}}(-1)^{m_1+m_2}\binom{m_1+\xqz{m}}{\xqz{m}}\binom{m_2+\xqz{n}}{\xqz{n}}\\
        &\cdot\Res_{x_2}\Res_{x_1}\frac{e^{x_2f(r_2)}}{(e^{x_2}-1)^{\xqz{n}+m_2+1}}\frac{e^{(x_1-x_2)(1+\xqz{m})}}{(e^{(x_1-x_2)}-1)^{\xqz{m}+m_1+1}}Y(u_1,x_1)Y(u_2,x_2)u_3\\
        &-\sum_{m_1=0}^{\xqz{n}}\sum_{m_2=0}^{\xqz{m}}(-1)^{m_1+m_2}\binom{m_1+\xqz{m}}{\xqz{m}}\binom{m_2+\xqz{n}}{\xqz{n}}\\
        &\cdot\Res_{x_2}\Res_{x_1}\frac{e^{x_2f(r_2)}}{(e^{x_2}-1)^{\xqz{n}+m_2+1}}\frac{e^{(x_1-x_2)(1+\xqz{m})}}{(e^{(-x_2+x_1)}-1)^{\xqz{m}+m_1+1}} Y(u_2,x_2)Y(u_1,x_1)u_3\\
        =&\sum_{m_1=0}^{\xqz{n}}\sum_{m_2=0}^{\xqz{m}}\sum_{k\geq 0}(-1)^{m_1+m_2+k}\binom{m_1+\xqz{m}}{\xqz{m}}\binom{m_2+\xqz{n}}{\xqz{n}}\binom{-\xqz{m}-m_1-1}{k}\\
        &\cdot\Res_{x_2}\Res_{x_1}\frac{e^{x_1(1+\xqz{m})}}{(e^{x_1}-1)^{\xqz{m}+m_1+1+k}}\frac{e^{x_2(f(r_2)+m_1)}}{(e^{x_2}-1)^{\xqz{n}+m_2+1-k}}Y(u_1,x_1)Y(u_2,x_2)u_3\\
&-\sum_{m_1=0}^{\xqz{n}}\sum_{m_2=0}^{\xqz{m}}\sum_{k\geq 0}(-1)^{m_2+\xqz{m}+k+1}\binom{m_1+\xqz{m}}{\xqz{m}}\binom{m_2+\xqz{n}}{\xqz{n}}\binom{-\xqz{m}-m_1-1}{k}\\
&\cdot\Res_{x_2}\Res_{x_1}\frac{e^{x_2(f(r_2)+m_1+k)}}{(e^{x_2}-1)^{\xqz{m}+\xqz{n}+m_2+m_1+k+2}}\frac{e^{x_1(-m_1-k)}}{(e^{x_1}-1)^{-k}}Y(u_2,x_2)Y(u_1,x_1)u_3\\
\equiv&\sum_{m_1=0}^{\xqz{n}}\sum_{m_2=0}^{\xqz{m}}\sum_{k\geq 0}(-1)^{m_1+m_2+k}\binom{m_1+\xqz{m}}{\xqz{m}}\binom{m_2+\xqz{n}}{\xqz{n}}\binom{-\xqz{m}-m_1-1}{k}\\
        &\cdot\Res_{y_2}\Res_{y_1}\frac{(1+y_1)^{\xqz{m}}}{y_1^{\xqz{m}+m_1+1+k}}\frac{{(1+y_2)}^{f(r_2)+m_1-1}}{y_2^{\xqz{n}+m_2+1-k}}\\
        &\cdot Y(u_1,\log(1+y_1))Y(u_2,\log(1+y_2))u_3\bmod \tilde{O}_{g,n,m}(V)\quad\quad\quad\quad\quad\quad\quad\text{(by Lemma~\ref{prop1})}\\
=&\sum_{m_1=0}^{\xqz{n}}\sum_{m_2=0}^{\xqz{m}}\sum_{k= 0}^{\xqz{n}-m_1}(-1)^{m_1+m_2+k}\binom{m_1+\xqz{m}}{\xqz{m}}\binom{m_2+\xqz{n}}{\xqz{n}}\binom{-\xqz{m}-m_1-1}{k}\\
        &\cdot\Res_{y_2}\Res_{y_1}\frac{(1+y_1)^{\xqz{m}}}{y_1^{\xqz{m}+m_1+1+k}}\frac{{(1+y_2)}^{f(r_2)+m_1-1}}{y_2^{\xqz{n}+m_2+1-k}} Y(u_1,\log(1+y_1))Y(u_2,\log(1+y_2))u_3\\
=&\sum_{m_1=0}^{\xqz{n}}\sum_{m_2=0}^{\xqz{m}}(-1)^{m_1+m_2}\binom{m_1+\xqz{m}}{\xqz{m}}\binom{m_2+\xqz{n}}{\xqz{n}}\cdot\Res_{y_2}\Res_{y_1}\frac{(1+y_1)^{\xqz{m}}}{y_1^{\xqz{m}+m_1+1}}\frac{{(1+y_2)}^{f(r_2)-1}}{y_2^{\xqz{n}+m_2+1}}\\
        &\quad\cdot Y(u_1,\log(1+y_1))Y(u_2,\log(1+y_2))u_3\quad\quad\quad\quad\quad\quad\quad\quad\quad\quad\text{(by \cite[Proposition 5.2]{DJ1})}\\
        =&\,u_1\bar{\bullet}_{g,m}^n(u_2\bullet_{g,m}^nu_3).    \end{align*}
\end{proof}

Let $\tilde{O}_{g, n, m}^{\prime\prime}(V)$ be the linear span of all elements of the form
\[
u \bullet_{g, m, p_3}^{\,n} \left( 
\bigl(a \bullet_{g, p_1, p_2}^{\,p_3} b\bigr) \bullet_{g, m, p_1}^{\,p_3} c 
- a \bullet_{g, m, p_2}^{\,p_3} \bigl(b \bullet_{g, m, p_1}^{\,p_2} c\bigr)
\right),
\]
where $a, b, c, u \in V$ and $p_1, p_2, p_3 \in (1/T)\mathbb{N}$. 
Define
\[
\tilde{O}_{g, n, m}^{\prime\prime\prime}(V) 
:= \sum_{p_1, p_2 \in (1/T)\mathbb{N}} 
\left( V \bullet_{g, p_1, p_2}^{\,n} \tilde{O}_{g, p_2, p_1}^{\prime}(V) \right) \bullet_{g, m, p_1}^{\,n} V,
\]
and set
\[
\tilde{O}_{g, n, m}(V) 
:= \tilde{O}_{g, n, m}^{\prime}(V) + \tilde{O}_{g, n, m}^{\prime\prime}(V) + \tilde{O}_{g, n, m}^{\prime\prime\prime}(V).
\]

\begin{lem}\label{lem3.7}
For any $m, n, p \in (1/T)\mathbb{N}$, we have
\[
V \bullet_{g, m, p}^{\,n} \tilde{O}_{g, p, m}(V) \subseteq \tilde{O}_{g, n, m}(V),
\qquad
\tilde{O}_{g, n, p}(V) \bullet_{g, m, p}^{\,n} V \subseteq \tilde{O}_{g, n, m}(V).
\]
In particular,
\[
V \,\bar{\bullet}_{g, m}^{\,n} \tilde{O}_{g, n, m}(V) \subseteq \tilde{O}_{g, n, m}(V),
\qquad
\tilde{O}_{g, n, m}(V) \bullet_{g, m}^{\,n} V \subseteq \tilde{O}_{g, n, m}(V).
\]
\end{lem}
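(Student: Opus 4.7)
The plan is to decompose $\tilde{O}_{g,p,m}(V) = \tilde{O}_{g,p,m}'(V) + \tilde{O}_{g,p,m}''(V) + \tilde{O}_{g,p,m}'''(V)$ and verify left-absorption on each summand separately, arguing symmetrically for right-absorption. The single observation driving everything is: taking $u=\mathbf{1}$ and $p_3 = L$ in the definition of $\tilde{O}_{g,L,m}''(V)$, together with the identity $\mathbf{1}\,\bar{\bullet}_{g,m}^L v = v$, shows that every \emph{bare associator at top level $L$ and bimodule level $m$}, namely
\[
(a \bullet_{g,p_1,p_2}^L b) \bullet_{g,m,p_1}^L c - a \bullet_{g,m,p_2}^L (b \bullet_{g,m,p_1}^{p_2} c),
\]
already lies in $\tilde{O}_{g,L,m}''(V)$. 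Thus any nested $\bullet$-product whose outermost second subscript is $m$ may be re-associated freely, with the resulting error sitting inside $\tilde{O}_{g,L,m}''(V)$, where $L$ is the outer top level.

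For $V\bullet_{g,m,p}^n \tilde{O}_{g,p,m}(V) \subseteq \tilde{O}_{g,n,m}(V)$, Step~1 handles $w\in \tilde{O}_{g,p,m}'(V)$: by Corollary~\ref{cor3.4}(2), $v \bullet_{g,m,p}^n w \equiv (v \bullet_{g,m,p}^n w) \bullet_{g,m}^n \mathbf{1} \pmod{\tilde{O}_{g,n,m}'(V)}$, and the right-hand side is of the form $(V \bullet_{g,m,p}^n \tilde{O}_{g,p,m}'(V))\bullet_{g,m,m}^n V \subseteq \tilde{O}_{g,n,m}'''(V)$ (with $p_1 = m$, $p_2 = p$ in the definition). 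Step~2, for $w = u\bullet_{g,m,p_3}^p X \in \tilde{O}_{g,p,m}''(V)$, applies a single top-level re-association (with $a=v$, $b=u$, $c=X$) to obtain
\[
v \bullet_{g,m,p}^n (u \bullet_{g,m,p_3}^p X) \equiv (v \bullet_{g,p_3,p}^n u) \bullet_{g,m,p_3}^n X \pmod{\tilde{O}_{g,n,m}''(V)},
\]
whose right-hand side is in $\tilde{O}_{g,n,m}''(V)$ by definition. Step~3, for $w = (u_1 \bullet_{g,q_1,q_2}^p Y)\bullet_{g,m,q_1}^p u_2 \in \tilde{O}_{g,p,m}'''(V)$ with $Y\in\tilde{O}_{g,q_2,q_1}'(V)$, chains three successive re-associations---first at top $p$ inside $w$ (its error, once hit by $v\bullet_{g,m,p}^n$, is absorbed by Step~2 above), then two at top $n$:
\begin{align*}
v\bullet_{g,m,p}^n w
&\equiv v\bullet_{g,m,p}^n\bigl[u_1\bullet_{g,m,q_2}^p(Y\bullet_{g,m,q_1}^{q_2}u_2)\bigr]\\
&\equiv (v\bullet_{g,q_2,p}^n u_1)\bullet_{g,m,q_2}^n(Y\bullet_{g,m,q_1}^{q_2}u_2)\\
&\equiv \bigl[(v\bullet_{g,q_2,p}^n u_1)\bullet_{g,q_1,q_2}^n Y\bigr]\bullet_{g,m,q_1}^n u_2,
\end{align*}
the final term manifestly lying in $(V\bullet_{g,q_1,q_2}^n \tilde{O}_{g,q_2,q_1}'(V))\bullet_{g,m,q_1}^n V \subseteq \tilde{O}_{g,n,m}'''(V)$.

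The right inclusion $\tilde{O}_{g,n,p}(V)\bullet_{g,m,p}^n V\subseteq \tilde{O}_{g,n,m}(V)$ is argued symmetrically. For $w\in\tilde{O}_{g,n,p}'(V)$ I use $\mathbf{1}\,\bar{\bullet}_{g,p}^n w = w$ to realize $w\bullet_{g,m,p}^n u = (\mathbf{1}\bullet_{g,p,n}^n w)\bullet_{g,m,p}^n u$ directly inside $(V\bullet_{g,p,n}^n \tilde{O}_{g,n,p}'(V))\bullet_{g,m,p}^n V \subseteq \tilde{O}_{g,n,m}'''(V)$ (with $p_1 = p$, $p_2 = n$). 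For $w\in\tilde{O}_{g,n,p}'''(V)$, a single top-level re-association at $(n,m)$ immediately produces an element of $\tilde{O}_{g,n,m}'''(V)$ modulo $\tilde{O}_{g,n,m}''(V)$. The delicate case is $w = u\bullet_{g,p,p_3}^n X \in \tilde{O}_{g,n,p}''(V)$: a first top-level move reduces the problem to showing $X\bullet_{g,m,p}^{p_3}v \in \tilde{O}_{g,p_3,m}''(V)$; expanding $X$, re-associating each of its two triple summands to the right (both moves at bimodule level $m$, hence with error inside $\tilde{O}_{g,p_3,m}''(V)$) and absorbing the cross-associator at intermediate level $p_2$ through the already-proved left Step~2, leaves another bare associator at top $p_3$, bimodule $m$ (with $c$ replaced by $c\bullet_{g,m,p}^{p_1}v$), hence inside $\tilde{O}_{g,p_3,m}''(V)$; a final application of left Step~2 then places everything in $\tilde{O}_{g,n,m}''(V)$.

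The main obstacle is Step~3 of the left inclusion. A naive move such as $v\bullet_{g,q_1,p}^n(u_1\bullet_{g,q_1,q_2}^p Y)\equiv(v\bullet_{g,q_2,p}^n u_1)\bullet_{g,q_1,q_2}^n Y$ incurs an error at the auxiliary bimodule level $q_1\neq m$, which we have no way to absorb into $\tilde{O}_{g,n,m}(V)$. The key trick is to re-associate $w$ itself first at level $p$, bimodule $m$, pushing $u_2$ into the innermost $c$-slot; after that move every subsequent re-association is genuinely at bimodule $m$, so every error lands in $\tilde{O}_{g,n,m}''(V)$ as required.
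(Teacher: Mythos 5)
Your proof is correct and follows essentially the same route as the paper: reduce everything to the ``bare associator'' obtained by taking $u=\mathbf{1}$ in the definition of $\tilde{O}''_{g,n,m}(V)$, handle the $\tilde{O}'$ summands via Corollary~\ref{cor3.4} and the identity $\mathbf{1}\,\bar{\bullet}\,w=w$ (the paper's \eqref{eq3.2} and \eqref{eq3.5}), and re-associate the $\tilde{O}''$ and $\tilde{O}'''$ summands step by step with each error absorbed into $\tilde{O}''$. The only difference is that you spell out the right-hand inclusion, in particular the case $\tilde{O}''_{g,n,p}(V)\bullet_{g,m,p}^{\,n}V$, which the paper dismisses as ``analogous''; your treatment of that case is sound.
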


\begin{proof}
Recall that $\mathbf{1} \,\bar{\bullet}_{g, m}^{\,n} u = u$ for all $u \in V$. First, observe that
\begin{equation}\label{eq3.2}
\tilde{O}_{g, n, p}^{\prime}(V) \bullet_{g, m, p}^{\,n} V 
\subseteq \bigl(V \,\bar{\bullet}_{g, p}^{\,n} \tilde{O}_{g, n, p}^{\prime}(V)\bigr) \bullet_{g, m, p}^{\,n} V 
\subseteq \tilde{O}_{g, n, m}^{\prime\prime\prime}(V),
\end{equation}
and by definition,
\begin{equation}\label{eq3.3}
\bigl(a \bullet_{g, p_1, p_2}^{\,n} b\bigr) \bullet_{g, m, p_1}^{\,n} c 
- a \bullet_{g, m, p_2}^{\,n} \bigl(b \bullet_{g, m, p_1}^{\,p_2} c\bigr) 
\in \tilde{O}_{g, n, m}^{\prime\prime}(V)
\end{equation}
for all $a, b, c \in V$ and $p_1, p_2 \in (1/T)\mathbb{N}$. Consequently,
\begin{equation}\label{eq3.4}
V \bullet_{g, m, p_2}^{\,n} \bigl( \tilde{O}_{g, p_2, p_1}^{\prime}(V) \bullet_{g, m, p_1}^{\,p_2} V \bigr) 
\subseteq \tilde{O}_{g, n, m}(V).
\end{equation}
Using Corollary~\ref{cor3.4} and the definition of $\tilde{O}_{g, n, m}^{\prime\prime\prime}(V)$, we obtain
\begin{equation}\label{eq3.5}
V \bullet_{g, m, p}^{\,n} \tilde{O}_{g, p, m}^{\prime}(V) 
\subseteq \bigl(V \bullet_{g, m, p}^{\,n} \tilde{O}_{g, p, m}^{\prime}(V)\bigr) \bullet_{g, m}^{\,n} V 
+ \tilde{O}_{g, n, m}^{\prime}(V) 
\subseteq \tilde{O}_{g, n, m}(V).
\end{equation}
Combining \eqref{eq3.2} and \eqref{eq3.5}, it suffices to verify that
\begin{equation}\label{eq3.6}
V \bullet_{g, m, p}^{\,n} \Bigl( 
\tilde{O}_{g, p, m}^{\prime\prime}(V)+  \bigl(V \bullet_{g, p_1, p_2}^{\,p} \tilde{O}_{g, p_2, p_1}^{\prime}(V)\bigr) \bullet_{g, m, p_1}^{\,p} V 
\Bigr) 
\subseteq \tilde{O}_{g, n, m}(V)
\end{equation}
and
\begin{equation}\label{eq3.7}
\Bigl( 
\tilde{O}_{g, n, p}^{\prime\prime}(V) +\bigl( V \bullet_{g, p_1, p_2}^{\,n} \tilde{O}_{g, p_2, p_1}^{\prime}(V)\bigr) \bullet_{g, p, p_1}^{\,n} V 
\Bigr) \bullet_{g, m, p}^{\,n} V 
\subseteq \tilde{O}_{g, n, m}(V)
\end{equation}
for all $p_1, p_2, p \in (1/T)\mathbb{N}$. We prove only \eqref{eq3.6}; the proof of \eqref{eq3.7} is analogous.  Consider a typical generator of $\tilde{O}_{g, p, m}^{\prime\prime}(V)$:
\[
u \bullet_{g, m, p_3}^{\,p} \left( 
\bigl(a \bullet_{g, p_1, p_2}^{\,p_3} b\bigr) \bullet_{g, m, p_1}^{\,p_3} c 
- a \bullet_{g, m, p_2}^{\,p_3} \bigl(b \bullet_{g, m, p_1}^{\,p_2} c\bigr) 
\right).
\]
Then by \eqref{eq3.3}, we obtain
\begin{align*}
& v \bullet_{g, m, p}^{\,n} \Bigl( 
u \bullet_{g, m, p_3}^{\,p} \bigl( 
(a \bullet_{g, p_1, p_2}^{\,p_3} b) \bullet_{g, m, p_1}^{\,p_3} c 
- a \bullet_{g, m, p_2}^{\,p_3} (b \bullet_{g, m, p_1}^{\,p_2} c) 
\bigr) 
\Bigr) \\
\equiv &\bigl(v \bullet_{g, p_3, p}^{\,n} u\bigr) \bullet_{g, m, p_3}^{\,n} \Bigl( 
(a \bullet_{g, p_1, p_2}^{\,p_3} b) \bullet_{g, m, p_1}^{\,p_3} c 
- a \bullet_{g, m, p_2}^{\,p_3} (b \bullet_{g, m, p_1}^{\,p_2} c) 
\Bigr) \\
\equiv& \,0 \bmod \tilde{O}_{g, n, m}(V).
\end{align*}
Thus we have
$V \bullet_{g, m, p}^{\,n}  
\tilde{O}_{g, p, m}^{\prime\prime}(V) 
\subseteq \tilde{O}_{g, n, m}(V)$.
By the definition of $\tilde{O}_{g, n, m}(V)$ together with \eqref{eq3.3} and \eqref{eq3.4}, we have
\begin{align*}
& V \bullet_{g, m, p}^{\,n} \Bigl( 
\bigl(V \bullet_{g, p_1, p_2}^{\,p} \tilde{O}_{g, p_2, p_1}^{\prime}(V)\bigr) \bullet_{g, m, p_1}^{\,p} V 
\Bigr) \\
 \subseteq& V \bullet_{g, m, p}^{\,n} \Bigl( 
V \bullet_{g, m, p_2}^{\,p} \bigl( \tilde{O}_{g, p_2, p_1}^{\prime}(V) \bullet_{g, m, p_1}^{\,p_2} V \bigr) 
\Bigr) + \tilde{O}_{g, n, m}(V) \\
 \subseteq &\bigl(V \bullet_{g, p_2, p}^{\,n} V\bigr) \bullet_{g, m, p_2}^{\,n} 
\bigl( \tilde{O}_{g, p_2, p_1}^{\prime}(V) \bullet_{g, m, p_1}^{\,p_2} V \bigr) 
+ \tilde{O}_{g, n, m}(V) \\
 \subseteq & \tilde{O}_{g, n, m}(V).
\end{align*}
This completes the proof.
\end{proof}

We now define the quotient space
\[
\tilde{A}_{g, n, m}(V) := V / \tilde{O}_{g, n, m}(V).
\]
The following is the first main theorem of this paper.

\begin{thm}\label{thm3.8}
 $\tilde{A}_{g, n, m}(V)$ carries the structure of an 
$\tilde{A}_{g, n}(V)$--$\tilde{A}_{g, m}(V)$-bimodule, where the left action of 
$\tilde{A}_{g, n}(V)$ is given by $\bar{\bullet}_{g, m}^{\,n}$ and the right action of 
$\tilde{A}_{g, m}(V)$ is given by $\bullet_{g, m}^{\,n}$.
\end{thm}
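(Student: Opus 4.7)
The theorem decomposes into three verifications: (i) both products descend from $V$ to the quotient $\tilde{A}_{g,n,m}(V)$; (ii) $\mathbf{1}$ serves as a two-sided identity on $\tilde{A}_{g,n,m}(V)$; and (iii) the associativity and compatibility axioms of a bimodule hold modulo $\tilde{O}_{g,n,m}(V)$. Step (i) is immediate from Lemma~\ref{lem3.7}: applied with $p=n$ it gives $\tilde{O}_{g,n}(V)\,\bar{\bullet}_{g,m}^{\,n}\,V\subseteq\tilde{O}_{g,n,m}(V)$ and $V\,\bar{\bullet}_{g,m}^{\,n}\,\tilde{O}_{g,n,m}(V)\subseteq\tilde{O}_{g,n,m}(V)$; applied with $p=m$ it yields the two analogous inclusions for $\bullet_{g,m}^{\,n}$, so both operations descend to well-defined $\tilde{A}_{g,n}(V)$-left and $\tilde{A}_{g,m}(V)$-right actions on $\tilde{A}_{g,n,m}(V)$. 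For step (ii), the identity $\mathbf{1}\,\bar{\bullet}_{g,m}^{\,n}\,v=v$ holds in $V$ itself, as recorded just after the definition of $\bar{\bullet}_{g,m}^{\,n}$, while $v\,\bullet_{g,m}^{\,n}\,\mathbf{1}-v\in\tilde{O}_{g,n,m}^{\prime}(V)\subseteq\tilde{O}_{g,n,m}(V)$ is exactly Corollary~\ref{cor3.4}(2).

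The key observation for step (iii) is that $\tilde{O}_{g,n,m}^{\prime\prime}(V)$ has been designed precisely so that the three required identities fall out of one universal generator. In the defining expression
\[
u\,\bullet_{g,m,p_3}^{\,n}\!\Bigl[(a\,\bullet_{g,p_1,p_2}^{\,p_3}\,b)\,\bullet_{g,m,p_1}^{\,p_3}\,c \;-\; a\,\bullet_{g,m,p_2}^{\,p_3}\,(b\,\bullet_{g,m,p_1}^{\,p_2}\,c)\Bigr],
\]
I would take $u=\mathbf{1}$ and $p_3=n$; since $\mathbf{1}\,\bullet_{g,m,n}^{\,n}\,w=\mathbf{1}\,\bar{\bullet}_{g,m}^{\,n}\,w=w$, the outer factor collapses and the bracketed difference itself lies in $\tilde{O}_{g,n,m}^{\prime\prime}(V)\subseteq\tilde{O}_{g,n,m}(V)$ for every $p_1,p_2\in(1/T)\mathbb{N}$. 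Specializing $p_1=p_2=n$ then produces left-associativity $(a\bullet_{g,n}b)\,\bar{\bullet}_{g,m}^{\,n}\,v\equiv a\,\bar{\bullet}_{g,m}^{\,n}\,(b\,\bar{\bullet}_{g,m}^{\,n}\,v)$; specializing $p_1=p_2=m$, after relabelling $a,b,c$ as $v,a,b$, produces right-associativity $(v\,\bullet_{g,m}^{\,n}\,a)\,\bullet_{g,m}^{\,n}\,b\equiv v\,\bullet_{g,m}^{\,n}\,(a\,\bullet_{g,m}\,b)$; and specializing $p_1=m,\,p_2=n$, after relabelling $a,b,c$ as $a,v,b$, produces left-right compatibility $(a\,\bar{\bullet}_{g,m}^{\,n}\,v)\,\bullet_{g,m}^{\,n}\,b\equiv a\,\bar{\bullet}_{g,m}^{\,n}\,(v\,\bullet_{g,m}^{\,n}\,b)$.

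I do not expect a substantive obstacle inside this proof itself: the genuine technical work has already been absorbed into the construction of $\tilde{O}_{g,n,m}(V)$ and into Lemma~\ref{lem3.7}, where one must verify that the $\tilde{O}^{\prime\prime}$ and $\tilde{O}^{\prime\prime\prime}$ summands are stable under one- and two-sided multiplication by $V$. The one point requiring care here is pure book-keeping: tracking the renaming of $a,b,c$ in each specialization so as to confirm that each of the three bimodule identities genuinely matches the same $\tilde{O}_{g,n,m}^{\prime\prime}(V)$-generator with $p_3=n$. Once these three specializations are written out explicitly, the bimodule axioms follow and the proof concludes.
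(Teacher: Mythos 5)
Your proposal is correct and follows essentially the same route as the paper, which states Theorem~\ref{thm3.8} without a separate written proof because all the ingredients you assemble are exactly the preceding results: Lemma~\ref{lem3.7} for well-definedness of both actions, the identity $\mathbf{1}\,\bar{\bullet}_{g,m}^{\,n}\,v=v$ together with Corollary~\ref{cor3.4}(2) for unitality, and the $u=\mathbf{1}$, $p_3=n$ specialization of the $\tilde{O}_{g,n,m}^{\prime\prime}(V)$-generator (recorded as \eqref{eq3.3} in the paper) for the three associativity/compatibility identities, with your three choices of $(p_1,p_2)$ checking out. The only cosmetic divergence is that the paper's unnumbered lemma following Lemma~\ref{lem3.5} establishes the left--right compatibility identity by direct computation with the stronger conclusion that it lies in $\tilde{O}_{g,n,m}^{\prime}(V)$, whereas you derive it (sufficiently for the theorem) from the weaker $\tilde{O}_{g,n,m}^{\prime\prime}(V)$ membership via $p_1=m$, $p_2=n$.
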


Motivated by Dong-Jiang's conjecture in~\cite{DJ1,DJ2}, we  propose the following conjecture: 
\begin{conj}\label{conj7.8}
  For all $n, m \in (1/T)\mathbb{N}$, one has $\tilde{O}^{\prime}_{g,n,m}(V) = \tilde{O}_{g,n,m}(V).$ 
\end{conj}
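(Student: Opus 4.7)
The plan is to prove the two nontrivial inclusions $\tilde{O}_{g,n,m}^{\prime\prime}(V)\subseteq \tilde{O}'_{g,n,m}(V)$ and $\tilde{O}_{g,n,m}^{\prime\prime\prime}(V)\subseteq \tilde{O}'_{g,n,m}(V)$; the reverse inclusion is built into the definition $\tilde{O}_{g,n,m}(V)=\tilde{O}'_{g,n,m}(V)+\tilde{O}_{g,n,m}^{\prime\prime}(V)+\tilde{O}_{g,n,m}^{\prime\prime\prime}(V)$. This is the twisted $\phi$-coordinated analogue of the Dong--Jiang conjecture for $A_{g,n,m}(V)$ in~\cite{DJ1,DJ2}, which remains open in general; what follows is therefore a \emph{strategy}, not a complete proof.

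The central idea is to upgrade Lemma~\ref{lem3.5} and the (unlabeled) associativity lemma immediately preceding Lemma~\ref{lem3.7} from the square case ($p=m$, or $p_1=p_2=n$) to the fully rectangular setting. First I would establish
$$V \bullet_{g,m,p}^{n}\, \tilde{O}'_{g,p,m}(V) \;+\; \tilde{O}'_{g,n,p}(V) \bullet_{g,m,p}^{n} V \;\subseteq\; \tilde{O}'_{g,n,m}(V)$$
for every $p\in (1/T)\mathbb{N}$, which would force $\tilde{O}_{g,n,m}^{\prime\prime\prime}(V)\subseteq \tilde{O}'_{g,n,m}(V)$. Next I would prove the rectangular associativity
$$\bigl(a \bullet_{g,p_1,p_2}^{n} b\bigr) \bullet_{g,m,p_1}^{n} c \;-\; a \bullet_{g,m,p_2}^{n}\bigl(b \bullet_{g,m,p_1}^{p_2} c\bigr) \;\in\; \tilde{O}'_{g,n,m}(V)$$
for $a,b,c\in V$ and $p_1,p_2\in (1/T)\mathbb{N}$; this is strictly stronger than the generators of $\tilde{O}_{g,n,m}^{\prime\prime}(V)$ and would absorb the latter into $\tilde{O}'_{g,n,m}(V)$. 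In both cases I would follow the template of the square-case computations: rewrite the outer product via Jacobi's identity, perform the substitution $y = e^x-1$, expand $(e^{x_1-x_2}-1)^{-H}$ binomially around $e^{-x_2}-1$ (as in the long derivation at the end of Lemma~\ref{lem3.5}), and invoke Lemma~\ref{prop1} to absorb every term whose pole order in $e^x-1$ exceeds the admissible threshold. The combinatorial identities playing the role of~\eqref{zuheeq4} would have to be rederived with $n, p_1, p_2, m$ (or $n, p, m$) all independent.

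The principal obstacle is the rectangular associativity step. In the square case the cancellation rested on the coincidences $h(r_1)=h(r_2)$ and $h(r_1)-f(r_1)+m-n=f(r_2)$ from~\eqref{zuheeq4}, combined with the single-index hypergeometric identity \cite[Proposition~5.2]{DJ1}. Once $p_1, p_2, m, n$ are independent, the Kronecker terms $\delta_{\bar{m}}(r)+\delta_{\bar{n}}(T-r)$ no longer pair up into a single $h(\cdot)$, and the inner sum produced after the binomial expansion does not telescope; a genuine multi-index generalization of \cite[Proposition~5.2]{DJ1}, or a delicate case analysis on the residues $\bar{p}_i-\bar{p}_j\pmod T$, will be needed.

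A conceptually cleaner alternative is to realize $V/\tilde{O}'_{g,n,m}(V)$ directly as a graded subquotient of the universal enveloping algebra $U(V[g])$ constructed in Section~7, in the spirit of~\cite{HXX1}. Since $U(V[g])$ is honestly associative, any such subquotient automatically inherits an $\tilde{A}_{g,n}(V)$--$\tilde{A}_{g,m}(V)$-bimodule structure, which would force both $\tilde{O}_{g,n,m}^{\prime\prime}$ and $\tilde{O}_{g,n,m}^{\prime\prime\prime}$ into $\tilde{O}'_{g,n,m}$. The catch is dual: constructing a well-defined surjection $V \twoheadrightarrow V/\tilde{O}'_{g,n,m}(V)$ onto the relevant subquotient using \emph{only} the $\tilde{O}'$-relations seems to require precisely the same combinatorial identities, so this route appears to repackage the difficulty rather than eliminate it.
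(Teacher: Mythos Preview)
You have correctly recognized that this statement is a \emph{conjecture}, not a theorem: the paper does not prove it in general, and your proposal is explicitly a strategy rather than a proof. That is the right call.

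Where the paper does establish the statement is only in the vertex operator algebra case (the Corollary following Theorem~\ref{thm9.9}), and there the argument is entirely different from both of your proposed routes. The paper transports the problem to the Dong--Jiang setting via the isomorphism $\tilde{A}_{g,n,m}(V)\cong A_{g,n,m}(V)$ of Theorem~\ref{thm9.9}, and then simply cites the known equality $O_{g,n,m}(V)=O'_{g,n,m}(V)$ from~\cite{Z3,XH1}. The Remark immediately after that Corollary is explicit that this cited result rests on the twisted regular representation theory of Li--Sun~\cite{LS1}, and that extending the conjecture to general vertex algebras would require an analogue of that theory---one currently unavailable because contragredient modules and the conformal structure are essential ingredients.

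Your two strategies (direct rectangular combinatorics generalizing Lemma~\ref{lem3.5}, or realization inside $U(V[g])$) are plausible lines of attack, and your diagnosis of the combinatorial obstacle---the failure of the identities in~\eqref{zuheeq4} to persist when $n,p_1,p_2,m$ are decoupled---is accurate. But neither approach is the one the paper takes even in the VOA case, and the paper's own commentary suggests that a proof for general vertex algebras will more likely come from building a $\phi$-coordinated version of regular representation theory than from either of your routes. Your honest acknowledgment that the universal enveloping algebra approach repackages rather than dissolves the difficulty is well taken.
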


\section{The functor $\tilde{\Omega}_n$}

Let $(W, Y_W)$ be a $g$-twisted $\phi$-coordinated $V$-module.  
For any $i \in (1/T)\Z$ and $u \in V$, define
$
o_i(u) = u_{-i-1} \in \End(W)$ and $o(u) = o_0(u).
$
For $n \in (1/T)\N$, set
\[
\tilde{\Omega}_n(W) = \left\{ w \in W \;\middle|\; o_m(v) w = 0 \text{ for all } v \in V \text{ and } m \in (1/T)\Z \text{ with } m < -n \right\}.
\]

\begin{lem}\label{lem4.1}
    Let $u \in V^r$, $v \in V^s$, and $m, p, n \in (1/T)\N$.  
    Then for any $w \in \tilde{\Omega}_m(W)$, we have
    \[
        o_{\,n - m}\!\bigl(u \bullet_{g, m, p}^{\,n} v\bigr) w
        \;=\;
        o_{\,n - p}(u)\, o_{\,p - m}(v) w.
    \]
\end{lem}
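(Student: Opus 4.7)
My plan is to compute both $u_{p-n-1}v_{m-p-1}w$ and $(u\bullet_{g,m,p}^n v)_{m-n-1}w$ as $\Res_z\Res_{x_2}$-integrals of $x_2^{m-n-1}Y_W(Y(u,\log(1+z))v,x_2)w$ against explicit $z$-kernels, and then to show that these kernels agree modulo terms that vanish on $w\in\tilde\Omega_m(W)$.

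First, I will dispose of the trivial residue-class cases. If $\bar p-\bar n\not\equiv r\pmod T$, then $u_{p-n-1}$ is the zero operator on $W$ by Axiom~(2) of Definition~\ref{defi_coor_module}, while $u\bullet_{g,m,p}^n v=0$ by definition; if $\bar m-\bar p\not\equiv s\pmod T$ (with $v\in V^s$), then $v_{m-p-1}=0$ on $W$ and, since $u\bullet_{g,m,p}^n v\in V^{r+s}$, its $(m-n-1)$-mode vanishes by the same residue-class argument. So I may assume $\bar p-\bar n\equiv r$ and $\bar m-\bar p\equiv s\pmod T$.

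Second, choosing $l\in\N$ large enough that weak $\phi$-associativity (Lemma~\ref{phi_weak_asso}) applies to $(u,v,w)$ with this $l$ (which is possible because $w\in\tilde\Omega_m(W)$), I mimic the residue computation in the proof of Proposition~\ref{prop2.8}---with the modes $m$ and $n$ there replaced by $p-n-1$ and $m-p-1$, so that $m+n+1$ becomes $m-n-1$---to obtain
\[
u_{p-n-1}v_{m-p-1}w = \Res_z\Res_{x_2}(1+z)^{p-n-1-l-r/T}x_2^{m-n-1}\bigl[(1+z)^{l+r/T}Y_W(Y(u,\log(1+z))v,x_2)w\bigr].
\]
On the other hand, directly from the definition of $\bullet_{g,m,p}^n$,
\[
(u\bullet_{g,m,p}^n v)_{m-n-1}w = \Res_z\Res_{x_2}f(z)\,x_2^{m-n-1}Y_W(Y(u,\log(1+z))v,x_2)w,
\]
where $f(z)=\sum_{i=0}^{\lfloor p\rfloor}(-1)^i\binom{\kappa-1+i}{i}(1+z)^\mu z^{-\kappa-i}$ with $\mu=-1+\lfloor m\rfloor+\delta_{\bar m}(r)+r/T$ and $\kappa=\lfloor m\rfloor+\lfloor n\rfloor-\lfloor p\rfloor+\delta_{\bar m}(r)+\delta_{\bar n}(T-r)$. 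The assumption $\bar p-\bar n\equiv r\pmod T$ forces the algebraic identity $\mu-\kappa=p-n-1$; combining this with the formal identity $\sum_{i\ge0}(-1)^i\binom{\kappa-1+i}{i}z^{-i}=z^\kappa(1+z)^{-\kappa}$ shows that the untruncated version of the sum defining $f(z)$ would be exactly $(1+z)^{p-n-1}$, which coincides with the combined kernel $(1+z)^{p-n-1-l-r/T}(1+z)^{l+r/T}$ appearing above. Hence the discrepancy between the two expressions is $\Res_z\Res_{x_2}\,T(z)\,x_2^{m-n-1}Y_W(Y(u,\log(1+z))v,x_2)w$, where $T(z):=(1+z)^\mu\sum_{i\ge\lfloor p\rfloor+1}(-1)^i\binom{\kappa-1+i}{i}z^{-\kappa-i}$.

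The hard part will be to verify that this tail contribution vanishes on $w$. Reapplying weak $\phi$-associativity in reverse to rewrite the integrand as $(1+z)^{-l-r/T}(z+1)^{l+r/T}Y_W(u,(z+1)x_2)Y_W(v,x_2)w$ and expanding in modes, each tail term (with $i\ge\lfloor p\rfloor+1$) contributes only monomials $u_Kv_Lw$ in which $L\ge m-1+1/T$ in the lattice $-1+s/T+\Z$; by definition of $\tilde\Omega_m(W)$ we have $v_Lw=0$ for such $L$. The truncation at $i=\lfloor p\rfloor$ in the definition of $\bullet_{g,m,p}^n$ is calibrated exactly so that the retained terms of $f(z)$ capture precisely those modes that act nontrivially on $\tilde\Omega_m(W)$, and carrying out this mode-by-mode verification, with careful bookkeeping of the binomial coefficients against the height bound, completes the proof.
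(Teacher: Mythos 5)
Your reduction of the lemma to a comparison of two $z$-kernels paired against $\Res_z\Res_{x_2}\,x_2^{m-n-1}Y_W(Y(u,\log(1+z))v,x_2)w$ is a legitimate starting point, and your treatment of the degenerate residue-class cases matches the paper's. But the pivot of your argument --- identifying the ``untruncated'' version of $f(z)$ with $(1+z)^{p-n-1}$ and isolating a ``tail'' $T(z)$ --- does not survive scrutiny in the formal calculus. First, the two kernels are expanded in opposite directions: in the formula inherited from Proposition~\ref{prop2.8}, the outer factor $(z_0+1)^{p-n-1-l-r/T}$ coming from the delta-function substitution must be expanded in the domain $|z|>1$ (otherwise $\Res_z$ of a power series in $z$ would make $u_{p-n-1}v_{m-p-1}w$ identically zero --- test $u=\mathbf{1}$), whereas each summand of $f(z)$ carries $(1+z)^{\mu}$ expanded in nonnegative powers of $z$. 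A product of the form $\iota_{z^{-1}}[(1+z)^a]\cdot\iota_{z}[(1+z)^b]$ is not $\iota[(1+z)^{a+b}]$ in either expansion, so ``the combined kernel coincides with $(1+z)^{p-n-1}$'' is exactly the kind of statement the delta-function formalism exists to forbid. Second, the untruncated sum $\sum_{i\geq 0}(-1)^i\binom{\kappa-1+i}{i}(1+z)^{\mu}z^{-\kappa-i}$ and hence your $T(z)$ are not well-defined formal series: since $\mu\notin\mathbb{N}$, each coefficient of a fixed power of $z$ receives infinitely many nonzero contributions, and the identity $\sum_{i\geq0}(-1)^i\binom{\kappa-1+i}{i}z^{-i}=z^{\kappa}(1+z)^{-\kappa}$ lives in $\mathbb{C}[[z^{-1}]]$ and cannot be multiplied against $\iota_z[(1+z)^{\mu}]$.

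Beyond this, the step you label ``the hard part'' is the entire content of the lemma, and you have only gestured at it. The paper's proof does precisely that work: it inserts $x_1^{-1}\delta\bigl(x_2(1+y_0)/x_1\bigr)$, runs the $g$-twisted $\phi$-Jacobi identity backwards to turn the iterate into a difference of two ordered products, kills the reversed-order product $Y_W(v,x_2)Y_W(u,x_1)w$ using $u_jw=0$ for $j>m-1$ (the defining property of $\tilde{\Omega}_m(W)$), expands $(x_1-x_2)^{-h(r)-1-i}$, truncates the resulting double sum at $i+j\leq\lfloor p\rfloor$ using $\tilde{\Omega}_m(W)$ a second time, and finally invokes the identity $\sum_{i=0}^{k}(-1)^i\binom{l+i}{i}\binom{l+k}{k-i}=0$ for $k\geq 1$ to isolate the single surviving term $u_{-n+p-1}v_{-p+m-1}w$. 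Your sketch of the tail estimate (``each tail term contributes only monomials $u_Kv_Lw$ with $L\geq m-1+1/T$'') is plausible in spirit but depends on sign and integrality conditions on $\kappa$ that you have not checked, and without carrying out the binomial bookkeeping there is no proof that the retained part of $f(z)$ reproduces $o_{n-p}(u)o_{p-m}(v)w$ on the nose. As it stands the proposal is an outline with a flawed central identification, not a proof.
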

\begin{proof}
   If $\bm - \bp \not\equiv s \pmod{T}$ and $\bp - \bn \equiv r \pmod{T}$,  
then $o_{p - m}(v) = 0$ and 
$
o_{\,n - m}\!\bigl(u \bullet_{g, m, p}^{\,n} v\bigr) w = 0.
$
If $\bp - \bn \not\equiv r \pmod{T}$, then we have $u \bullet_{g, m, p}^{\,n} v = 0$ and $o_{n-p}(u)=0$.

Finally, we consider the case where
$
\bp - \bn \equiv r$  and  $\bm - \bp \equiv s \pmod{T}
$. Set
\[
f(r)=-1+\lfloor m\rfloor+\delta_{\bar{m}}(r)+r / T,\quad h(r)=\lfloor m\rfloor+\lfloor n\rfloor-\lfloor p\rfloor-1+\delta_{\bar{m}}(r)+\delta_{\bar{n}}(T-r).
\]
For any $w \in \tilde{\Omega}_m(W)$, we have
\begin{align*}
    &\quad o_{n-m}(u\bullet_{g,m,p}^nv)w\\
    &=\sum_{i=0}^{\lfloor p\rfloor}(-1)^{i}\binom{h(r)+i}{i} \Res_{x_2}\operatorname{Res}_{y_0}x_2^{-n+m-1} \frac{(1+y_0)^{f(r)}}{y_0^{h(r)+1+i}} Y_W(Y(u, \log(1 + y_0))v,x_2)w\\
    &=\sum_{i=0}^{\lfloor p\rfloor}(-1)^{i}\binom{h(r)+i}{i}\Res_{x_1}\Res_{x_2}\operatorname{Res}_{y_0}x_2^{-n+m-1} \frac{(1+y_0)^{f(r)}}{y_0^{h(r)+1+i}}\left(\frac{x_2(1+y_0)}{x_1}\right)^{-r / T}\\
    &
    \quad\quad\quad \times x_1^{-1} \delta\left(\frac{x_2(1+y_0)}{x_1}\right)\left(\frac{x_2(1+y_0)}{x_1}\right)^{r / T} Y_W(Y(u, \log(1 + y_0))v,x_2)w\\
    &=\sum_{i=0}^{\lfloor p\rfloor}(-1)^{i}\binom{h(r)+i}{i}\Res_{x_1}\Res_{x_2}\operatorname{Res}_{y_0}x_2^{-n+m-1} \frac{(1+y_0)^{f(r)}}{y_0^{h(r)+1+i}}\left(\frac{x_2(1+y_0)}{x_1}\right)^{-r / T}\\
    &
    \quad\quad\quad \times\left(\left(x_2 y_0\right)^{-1} \delta\left(\frac{x_1-x_2}{x_2 y_0}\right) Y_W\left(u, x_1\right) Y_W\left(v, x_2\right)\right.\\
    &\quad\quad\quad\quad\quad\left.-\left(x_2 y_0\right)^{-1} \delta\left(\frac{x_2-x_1}{-x_2 y_0}\right) Y_W\left(v, x_2\right) Y_W\left(u, x_1\right)\right)\\
    &=\sum_{i=0}^{\lfloor p\rfloor}(-1)^{i}\binom{h(r)+i}{i}\Res_{x_2}\Res_{x_1}\left(
    \frac{x_1^{f(r)}x_2^{m-1-(\bn+r)/T-\xp+\delta_{\bn}(T-r)+i}}{(x_1-x_2)^{h(r)+1+i}}Y_W(u,x_1)Y_W(v,x_2)w
    \right.\\
    &\left.\quad\quad\quad\quad\quad\quad\quad\quad\quad\quad
    -\frac{x_1^{f(r)}x_2^{m-1-(\bn+r)/T-\xp+\delta_{\bn}(T-r)+i}}{(-x_2+x_1)^{h(r)+1+i}}Y_W(v,x_2)Y_W(u,x_1)w
    \right)\\
    &=\sum_{i=0}^{\lfloor p\rfloor}(-1)^{i}\binom{h(r)+i}{i}\Res_{x_2}\Res_{x_1}
    \frac{x_1^{f(r)}x_2^{-p+m-1+i}}{(x_1-x_2)^{h(r)+1+i}}Y_W(u,x_1)Y_W(v,x_2)w
   \\
    &=\sum_{i=0}^{\lfloor p\rfloor}\sum_{j\geq 0}(-1)^{i+j}\binom{h(r)+i}{i}\binom{-(h(r)+1+i)}{j}\\
    &
    \quad\quad\quad\times\Res_{x_2}\Res_{x_1}x_1^{-n+p-1-i-j}x_2^{-p+m-1+i+j}Y_W(u,x_1)Y_W(v,x_2)w\\
    &=\sum_{k=0}^{\lfloor p\rfloor}(-1)^{k}\sum_{i,j\geq 0, i+j=k}\binom{h(r)+i}{i}\binom{-(h(r)+1+i)}{j}\\
    &
    \quad\quad\quad\times\Res_{x_2}\Res_{x_1}x_1^{-n+p-1-k}x_2^{-p+m-1+k}Y_W(u,x_1)Y_W(v,x_2)w.
\end{align*}
It is enough to show that 
\[
\sum_{i=0}^k(-1)^i\binom{l+i}{i}\binom{l+k}{k-i}=0
\]
for any $l\in\Z$ and $k\in\Z_{\geq1}$, which follows from an easy calculation:
\begin{align*}
    \sum_{i=0}^k(-1)^i\binom{l+i}{i}\binom{l+k}{k-i}=\sum_{i=0}^k(-1)^i\binom{l+k}{k}\binom{k}{i}=\binom{l+k}{k}\sum_{i=0}^k(-1)^i\binom{k}{i}=0.
\end{align*}
This completes the proof.
\end{proof}

\begin{lem}\label{lem4.3}
    Let $m, n \in (1/T)\N$.  
    Then for every $u \in \tilde{O}_{g,n,m}(V)$, the operator $o_{\,n - m}(u)$ vanishes on $\tilde{\Omega}_m(W)$.  
    In particular, for every $u \in \tilde{O}_{g,n}(V)$, we have $o(u) = 0$ on $\tilde{\Omega}_n(W)$.
\end{lem}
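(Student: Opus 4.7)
I would prove this by decomposing $\tilde{O}_{g,n,m}(V) = \tilde{O}'_{g,n,m}(V) + \tilde{O}''_{g,n,m}(V) + \tilde{O}'''_{g,n,m}(V)$ and treating each summand in turn. For $u = (\mathcal{D}+m-n)a \in L_{n,m}(V)$, Lemma~\ref{daozi} gives $Y_W(\mathcal{D}a,x) = (x\tfrac{d}{dx})Y_W(a,x)$, hence $(\mathcal{D}a)_k = -(k+1)a_k$; setting $k = m-n-1$ and combining with the linear term yields $o_{n-m}((\mathcal{D}+m-n)a) = -(m-n)a_{m-n-1} + (m-n)a_{m-n-1} = 0$ as an endomorphism of $W$. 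For $u = a \diamond_{g,m}^n b$ with $a \in V^r$, $b \in V$, I would follow the pattern of the proof of Lemma~\ref{lem4.1}: insert the factor $x_1^{-1}\delta(\tfrac{x_2(1+y_0)}{x_1})(\tfrac{x_2(1+y_0)}{x_1})^{r/T}$, apply the $g$-twisted $\phi$-Jacobi identity (Theorem~\ref{jacobi}), and evaluate the $y_0$-residue. Since the $y_0$-denominator in $\diamond_{g,m}^n$ is one order higher than in $\bullet_{g,m}^n$, every $b$-mode arising in the expansion strictly exceeds the threshold $m-1$ and is annihilated by $w \in \tilde{\Omega}_m(W)$.

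For $u \in \tilde{O}''_{g,n,m}(V)$, written as $v \bullet_{g,m,p_3}^n \bigl[(a \bullet_{g,p_1,p_2}^{p_3} b)\bullet_{g,m,p_1}^{p_3} c - a \bullet_{g,m,p_2}^{p_3}(b \bullet_{g,m,p_1}^{p_2} c)\bigr]$, I would apply Lemma~\ref{lem4.1} iteratively: a first application pulls out $o_{n-p_3}(v)$ acting on the image of the associator under $o_{p_3-m}$; further applications to each term of the associator reduce them both to the common expression $o_{p_3-p_2}(a)\,o_{p_2-p_1}(b)\,o_{p_1-m}(c)\,w$, so the difference vanishes. For $u \in \tilde{O}'''_{g,n,m}(V)$ of the form $(v \bullet_{g,p_1,p_2}^n b) \bullet_{g,m,p_1}^n c$ with $b \in \tilde{O}'_{g,p_2,p_1}(V)$, two applications of Lemma~\ref{lem4.1} reduce $o_{n-m}(u)\,w$ to $o_{n-p_2}(v)\,o_{p_2-p_1}(b)\,o_{p_1-m}(c)\,w$, and then the $\tilde{O}'$-case above (applied at level $(p_2,p_1)$) forces $o_{p_2-p_1}(b)$ to vanish on $\tilde{\Omega}_{p_1}(W)$.

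The main technical obstacle in the last two parts is that the second iterated application of Lemma~\ref{lem4.1} requires the intermediate vector $o_{p_1-m}(c)\,w$ to lie in $\tilde{\Omega}_{p_1}(W)$, a condition that may fail when $p_1 < m$. To overcome this I would use the $g$-twisted $\phi$-commutator formula \eqref{comm_formula} to move the $b$-modes past the $c$-mode: each commutator bracket produces modes whose indices strictly exceed the $\tilde{\Omega}_m$-annihilation bound $m-1$ and therefore vanish on $w$, while the remaining reordered terms either fit the $\tilde{\Omega}_{p_1}$-condition after the combinatorial cancellation already present in the proof of Lemma~\ref{lem4.1} or are absorbed by the defining relations of $\tilde{O}''$ and $\tilde{O}'''$. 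Keeping track of all these cancellations is the most delicate step of the proof.
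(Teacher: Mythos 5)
Your treatment of $L_{n,m}(V)$ and of the elements $a\diamond_{g,m}^{\,n}b$ coincides with the paper's: the paper computes $o_{n-m}(\mathcal{D}u)=-(m-n)\,o_{n-m}(u)$ from Lemma~\ref{daozi} exactly as you do, and for $\diamond_{g,m}^{\,n}$ it inserts the delta-function factor, invokes the $g$-twisted $\phi$-Jacobi identity, and checks that both the normally ordered term (via the modes of the second factor) and the anti-normally ordered term (via the modes of the first factor) are killed by $w\in\tilde{\Omega}_m(W)$ --- your "one order higher in the $y_0$-denominator" heuristic captures the first of these two checks. For $\tilde{O}''_{g,n,m}(V)$ and $\tilde{O}'''_{g,n,m}(V)$ the paper's entire argument is the single sentence "by applying Lemma~\ref{lem4.1}," i.e.\ precisely the iterated application you describe.

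The obstacle you flag in your last paragraph is, however, genuine, and your proposed resolution does not close it. The second application of Lemma~\ref{lem4.1} needs $o_{p_1-m}(c)w\in\tilde{\Omega}_{p_1}(W)$ (more precisely, it needs $o_s(a)$ to kill $o_{p_1-m}(c)w$ for $s\le -\lfloor p_1\rfloor-\delta_{\bar{p}_1}(r_a)-r_a/T$). Commuting $o_s(a)$ past $o_{p_1-m}(c)$ does kill all the bracket terms, as you say, but it leaves $o_{p_1-m}(c)\,o_s(a)w$ for $s$ in the range $-m\le s<-p_1$, which is nonempty exactly when $p_1<m$; nothing in the definition of $\tilde{\Omega}_m(W)$ makes these vanish, and they are not "absorbed by the defining relations of $\tilde{O}''$ and $\tilde{O}'''$" in any way you have exhibited. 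A concrete instance (take $T=1$, $n=0$, $m=1$, $p_1=p_2=p_3=0$): the associator contributes the residual terms $a_{-2}c_0b_0w+b_{-2}c_0a_0w$, which vanish for degree reasons when $W$ is $(1/T)\N$-graded and $w$ is homogeneous, but for which no mechanism is visible for a general $g$-twisted $\phi$-coordinated module $W$. So either one restricts the claim (e.g.\ to graded modules, or to the case where all intermediate levels satisfy $p_i\ge m$), or a genuinely new argument is needed; your sketch, like the paper's one-liner, does not supply it. Note that the "in particular" clause --- the only part of the lemma invoked later, in Theorem~\ref{thm4.4} and Theorem~\ref{thm:iso} --- concerns $\tilde{O}_{g,n}(V)=\tilde{O}'_{g,n,n}(V)$ and is fully covered by the $\tilde{O}'$-case that you and the paper both prove completely.
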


\begin{proof}
For any $u \in V$, Lemma~\ref{daozi} yields
\begin{align*}
    o_{\,n - m}(\D u)
    &= \Res_x\, x^{-n + m - 1} Y_W(\D u, x)
     = \Res_x\, x^{-n + m} \frac{d}{dx} Y_W(u, x) \\
    &= -\Res_x\, \frac{d}{dx}\!\bigl(x^{-n + m}\bigr) Y_W(u, x)
     = -(m - n) \Res_x\, x^{-n + m - 1} Y_W(u, x) \\
    &= -(m - n)\, o_{\,n - m}(u).
\end{align*}
Consequently,
$
o_{\,n - m}\bigl((\D + m - n)u\bigr) = 0.
$

Now let $u \in V^r$ and $v \in V^s$.  
We first show that $o_{\,n - m}\!\bigl(u \diamond_{g, m}^{\,n} v\bigr) = 0$ on $\tilde{\Omega}_m(W)$.  
This is immediate when $\bm - \bn \not\equiv r + s \pmod{T}$.  
Thus, we may assume that $\bm - \bn \equiv r + s \pmod{T}$.  
For any $w \in \tilde{\Omega}_m(W)$, we have
\begin{align*}
    &\quad o_{n-m}(u\diamond_{g,m}^nv)w=(u\diamond_{g,m}^nv)_{-n+m-1}w\\
    &=\Res_{x_2}\operatorname{Res}_{y_0}x_2^{-n+m-1} \frac{(1+y_0)^{-1+\delta_{\bar{m}}(r)+\lfloor m\rfloor+r / T}}{y_0^{\lfloor m\rfloor+\lfloor n\rfloor+\delta_{\bar{m}}(r)+\delta_{\bar{n}}(T-r)+1}}  Y_W(Y(u, \log(1 + y_0))v,x_2)w\\
    &= \Res_{x_2}\Res_{x_1}\operatorname{Res}_{y_0}
    x_2^{-n+m-1} \frac{(1+y_0)^{-1+\delta_{\bar{m}}(r)+\lfloor m\rfloor+r / T}}{y_0^{\lfloor m\rfloor+\lfloor n\rfloor+\delta_{\bar{m}}(r)+\delta_{\bar{n}}(T-r)+1}} 
    \left(\frac{x_2(1+y_0)}{x_1}\right)^{-r / T}\\
    &
    \quad\quad\quad x_1^{-1} \delta\left(\frac{x_2(1+y_0)}{x_1}\right)\left(\frac{x_2(1+y_0)}{x_1}\right)^{r / T}
    Y_W(Y(u, \log(1 + y_0))v,x_2)w\\
    &=\Res_{x_2}\Res_{x_1}\operatorname{Res}_{y_0}
    x_2^{-n+m-1}\frac{(1+y_0)^{-1+\delta_{\bar{m}}(r)+\lfloor m\rfloor+r / T}}{y_0^{\lfloor m\rfloor+\lfloor n\rfloor+\delta_{\bar{m}}(r)+\delta_{\bar{n}}(T-r)+1}} 
    \left(\frac{x_2(1+y_0)}{x_1}\right)^{-r / T}\\
    &\quad\quad\quad\times\left(\left(x_2 y_0\right)^{-1} \delta\left(\frac{x_1-x_2}{x_2 y_0}\right) Y_W\left(u, x_1\right) Y_W\left(v, x_2\right)w\right.\\
    &\quad\quad\quad\quad\quad\left.-\left(x_2 y_0\right)^{-1} \delta\left(\frac{x_2-x_1}{-x_2 y_0}\right) Y_W\left(v, x_2\right) Y_W\left(u, x_1\right)w\right)\\
    &=\Res_{x_2}\Res_{x_1}\left(
  \frac{x_1^{-1+\delta_{\bar{m}}(r)+\lfloor m\rfloor+r / T}x_2^{m-(\bn+r)/T+\delta_{\bn}(T-r)}}{(x_1-x_2)^{\lfloor m\rfloor+\lfloor n\rfloor+\delta_{\bar{m}}(r)+\delta_{\bar{n}}(T-r)+1}} Y_W(u,x_1)Y_W(v,x_2)w
    \right.\\
    &\quad\quad\quad\quad\quad\quad\quad\left.
    - \frac{x_1^{-1+\delta_{\bar{m}}(r)+\lfloor m\rfloor+r / T}x_2^{m-(\bn+r)/T+\delta_{\bn}(T-r)}}{(-x_2+x_1)^{\lfloor m\rfloor+\lfloor n\rfloor+\delta_{\bar{m}}(r)+\delta_{\bar{n}}(T-r)+1}}Y_W(v,x_2)Y_W(u,x_1)w
    \right)=0.
\end{align*}
By applying Lemma~\ref{lem4.1}, we deduce that both \( o_{n-m}\bigl(\tilde{O}_{g,n,m}^{\prime\prime}(V)\bigr) \) and \( o_{n-m}\bigl(\tilde{O}_{g,n,m}^{\prime\prime\prime}(V)\bigr) \) vanish in \( \tilde{\Omega}_m(W) \).
\end{proof}

Next we associate to each $g$-twisted $\phi$-coordinated $V$-module an  $\tilde{A}_{g,n}(V)$-module structure.

\begin{thm}\label{thm4.4}
    Let $(W, Y_W)$ be a $g$-twisted $\phi$-coordinated $V$-module, and let $n \in (1/T)\mathbb{N}$.  
    Then $\tilde{\Omega}_n(W)$ carries the structure of an $\tilde{A}_{g,n}(V)$-module, 
    where the action of $v \in V$ is given by the operator $o(v)$.
\end{thm}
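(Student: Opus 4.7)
The theorem requires me to verify four properties of the map $v \mapsto o(v)$: (a) \emph{stability}, $o(v)\tilde{\Omega}_n(W) \subseteq \tilde{\Omega}_n(W)$ for every $v \in V$; (b) \emph{descent}, the resulting map $V \to \End(\tilde{\Omega}_n(W))$ vanishes on $\tilde{O}_{g,n}(V)$ and hence factors through $\tilde{A}_{g,n}(V)$; (c) \emph{multiplicativity}, $o(u \bullet_{g,n} v) = o(u)\, o(v)$ on $\tilde{\Omega}_n(W)$; and (d) \emph{unitality}, $o(\mathbf{1}) = \operatorname{id}$. Of these, (d) is immediate from axiom~(1) of Definition~\ref{defi_coor_module}, which gives $Y_W(\mathbf{1}, x) = \operatorname{id}_W$. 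Property (b) is the specialization $m = n$ of Lemma~\ref{lem4.3} (the ``in particular'' clause stated there). Property (c) follows from the specialization $m = p = n$ of Lemma~\ref{lem4.1}, noting that $u \bullet_{g,n,n}^{\,n} v = u \bullet_{g,n} v$ by definition; bilinearity then extends the homogeneous case of the lemma to arbitrary $u, v \in V$.

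The only substantive step is therefore (a). Fix $v \in V$, $w \in \tilde{\Omega}_n(W)$, $u \in V^r$, and $k \in r/T + \mathbb{Z}$ with $k < -n$; I must show $o_k(u)\, o(v)\, w = u_{-k-1}\, v_{-1}\, w = 0$. I would write
\[
u_{-k-1}\, v_{-1}\, w \;=\; v_{-1}\, u_{-k-1}\, w \;+\; [u_{-k-1},\, v_{-1}]\, w
\]
and apply the commutator formula \eqref{comm_formula} to the bracket, obtaining
\[
[u_{-k-1},\, v_{-1}]\, w \;=\; \sum_{j \geq 0} \frac{(-k)^j}{j!}\, (u_j v)_{-k-1}\, w,
\]
a finite sum by axiom~(2) of a vertex algebra. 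Since $k < -n$, the definition of $\tilde{\Omega}_n(W)$ gives $u_{-k-1} w = o_k(u) w = 0$, and likewise $(u_j v)_{-k-1} w = o_k(u_j v) w = 0$ for every $j \geq 0$, because $u_j v \in V$. Hence both contributions vanish, and by linearity the argument extends from $u \in V^r$ to arbitrary $u \in V$. This proves $o(v)\tilde{\Omega}_n(W) \subseteq \tilde{\Omega}_n(W)$.

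I do not foresee any real obstacle: the deeper inputs---Theorem~\ref{jacobi} in the form of the commutator formula, Lemma~\ref{lem4.1} for multiplicativity, and Lemma~\ref{lem4.3} for vanishing on $\tilde{O}_{g,n}(V)$---have already been established, and the stability step (a) reduces to a single application of the commutator formula combined with the uniform annihilation condition built into the definition of $\tilde{\Omega}_n(W)$. Assembling the four items (a)--(d) yields the theorem.
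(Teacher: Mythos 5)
Your proposal is correct and follows essentially the same route as the paper: stability via the commutator formula \eqref{comm_formula} combined with the annihilation condition defining $\tilde{\Omega}_n(W)$, descent via Lemma~\ref{lem4.3} with $m=n$, and multiplicativity via Lemma~\ref{lem4.1} with $m=p=n$. The only cosmetic difference is that the paper phrases the stability step for general $o_p(u)o_q(v)w$ with $q\le 0$ before specializing to $q=0$, whereas you work with $q=0$ directly; the substance is identical.
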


\begin{proof}
We first show that $\tilde{\Omega}_n(W)$ is closed under the action of $o(v)$ for every $v \in V$.  
Let $u \in V^r$, $v \in V$, and $w \in \tilde{\Omega}_n(W)$.  
By the commutator formula~\eqref{comm_formula}, we have
\[
o_p(u)\, o_q(v) w - o_q(v)\, o_p(u) w
= \sum_{j \geq 0} \frac{(-p)^j}{j!}\, o_{p+q}(u_j v) w,
\]
for all $p \in -r/T + \Z$ and $q \in (1/T)\Z$.  

If $p < -n$ and $q \leq 0$, then $o_p(u) w = 0$ by definition of $\tilde{\Omega}_n(W)$, and the right-hand side also vanishes because $p + q < -n$. Hence $o_p(u)\, o_q(v) w = 0$, which implies that $o_q(v) w \in \tilde{\Omega}_n(W)$ for all $q \leq 0$.  
In particular, taking $q = 0$, we obtain $o(v) w \in \tilde{\Omega}_n(W)$.

Now let $w \in \tilde{\Omega}_n(W)$.  
By Lemmas~\ref{lem4.1} and~\ref{lem4.3}, we have
$
o(u) w = 0$  for all   $u\in \tilde{O}_{g,n}(V)$ and
$o(u \bullet_{g,n} v) w = o(u)\, o(v) w$ for all   $u, v \in V.
$
Therefore, the assignment $v \mapsto o(v)$ defines a well-defined action of $\tilde{A}_{g,n}(V)$ on $\tilde{\Omega}_n(W)$, making it an $\tilde{A}_{g,n}(V)$-module.
\end{proof}

\begin{prop}\label{prop4.5}
Suppose that $M = \bigoplus_{m \in (1/T)\N} M(m)$ is a $(1/T)\N$-graded $g$-twisted $\phi$-coordinated $V$-module, and let $n \in (1/T)\N$. Then the following hold:

{\rm(1)} $\tilde{\Omega}_n(M) \supset \bigoplus_{n\geq i \in (1/T)\N } M(i)$.  
    If $M$ is simple, then $
    \tilde{\Omega}_n(M) = \bigoplus_{n\geq i \in (1/T)\N } M(i).
    $

{\rm(2)} If $M$ is simple and $M(m) \neq 0$ for some $m \in (1/T)\N$ with $m \leq n$, then $M(m)$ is an irreducible $\tilde{A}_{g,n}(V)$-module.
 
\end{prop}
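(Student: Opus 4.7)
The plan is to establish Part~(1) by a direct grading argument, then deduce Part~(2) using simplicity of $M$ together with Proposition~\ref{prop2.8}.

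For the forward inclusion in Part~(1), note that $o_m(v)=v_{-m-1}$ shifts grading by $+m$: for $w\in M(i)$ with $i\leq n$ and any $m<-n$, we have $o_m(v)w\in M(i+m)$, and $i+m<n-n=0$ forces $M(i+m)=0$. For the equality under simplicity, I would first observe that $\tilde{\Omega}_n(M)$ is a graded subspace: applying $o_m(v)$ to a homogeneous decomposition $w=\sum_i w_i$ yields a sum of terms in distinct graded components, so each must vanish separately. It then suffices to contradict the existence of a nonzero homogeneous $w\in M(k)\cap\tilde{\Omega}_n(M)$ with $k>n$. By Proposition~\ref{prop2.8},
\[
\langle w\rangle=\operatorname{span}\{v_l w\mid v\in V,\ l\in(1/T)\Z\}.
\]
The $\tilde{\Omega}_n$-condition (which, setting $l=-m-1$, reads $v_l w=0$ for $l>n-1$) kills all generators with $l>n-1$, while the surviving ones $v_l w\in M(k-l-1)$ have weight $\geq k-n>0$. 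Hence $\langle w\rangle\subseteq\bigoplus_{j\geq k-n} M(j)$, which misses $M(0)\neq 0$. Since $w=\mathbf{1}_{-1}w\in\langle w\rangle$ is nonzero, $\langle w\rangle$ is a nonzero proper submodule of $M$, contradicting simplicity.

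For Part~(2), Part~(1) already gives $M(m)\subseteq\tilde{\Omega}_n(M)$, and since the operator $o(v)=v_{-1}$ preserves $M(m)$, this is in fact an $\tilde{A}_{g,n}(V)$-submodule. Given a nonzero $\tilde{A}_{g,n}(V)$-submodule $N\subseteq M(m)$, the submodule $\langle N\rangle\subseteq M$ is nonzero, so $\langle N\rangle=M$ by simplicity of $M$. Applying Proposition~\ref{prop2.8} and using the grading, the generators $v_l w$ with $w\in N$ lie in $M(m-l-1)$, so only those with $l=-1$ contribute to $M(m)\cap\langle N\rangle=M(m)$, yielding
\[
M(m)=\operatorname{span}\{o(v)w\mid v\in V,\ w\in N\}\subseteq N,
\]
and hence $N=M(m)$.

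No genuine obstacle arises; both parts rest on the fact that Proposition~\ref{prop2.8} reduces the submodule generated by a subset to single-mode actions, which combines cleanly with the grading shift rule $v_l\colon M(k)\to M(k-l-1)$.
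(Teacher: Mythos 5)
Your proposal is correct and follows essentially the same route as the paper: both parts reduce to Proposition~\ref{prop2.8} combined with the grading shift $v_l\colon M(k)\to M(k-l-1)$, with Part~(1) concluding via the standing convention $M(0)\neq 0$ and Part~(2) extracting the degree-$m$ component of the generated submodule. The only cosmetic difference is that you phrase the Part~(1) contradiction as ``$\langle w\rangle$ is a proper nonzero submodule,'' whereas the paper first invokes simplicity to get $M=\langle w\rangle$ and then derives $M(0)=0$; these are the same argument.
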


\begin{proof}
It is straightforward to verify that $\tilde{\Omega}_n(M)$ is a graded subspace of $M$; namely,
\[
\tilde{\Omega}_n(M) = \bigoplus_{i \in (1/T)\N} \bigl( \tilde{\Omega}_n(M) \cap M(i) \bigr).
\]
Set $\tilde{\Omega}_n(i) := \tilde{\Omega}_n(M) \cap M(i)$.  
Clearly, $M(i) \subseteq \tilde{\Omega}_n(M)$ whenever $i \leq n$, since $o_p(v) M(i) \subseteq M(i + p)$ and $p < -n$ implies $i + p < 0$.

To prove part (1), it remains to show that $\tilde{\Omega}_n(i) = 0$ for all $i > n$.  
Assume, to the contrary, that $\tilde{\Omega}_n(i) \neq 0$ for some $i > n$, and choose a nonzero vector $w \in \tilde{\Omega}_n(i)$.  
By Proposition~\ref{prop2.8}, $M$ is generated by any nonzero vector; in particular,
\[
M = \operatorname{span}\bigl\{ u_p w \mid u \in V,\ p \in (1/T)\Z \bigr\}.
\]
However, since $w \in \tilde{\Omega}_n(M)$, we have $o_p(v) w  = 0$ for all $p < -n$.  
Consequently, $M$ has no components in degree less than $i-n >0$, implying in particular that $M(0) = 0$.  
Hence $\tilde{\Omega}_n(i) = 0$ for $i > n$, proving~(1).

For part (2), let $U \subseteq M(m)$ be a nonzero $\tilde{A}_{g,n}(V)$-submodule.  
Consider the subspace
\[
N := \operatorname{span}\bigl\{ u_p U \mid u \in V,\ p \in (1/T)\Z \bigr\} \subseteq M.
\]
Then $N$ is a $(1/T)\N$-graded $g$-twisted $\phi$-coordinated $V$-submodule of $M$, and its degree-$m$ component is $U$.  
Since $M$ is simple, we must have $N = M$, which forces $U = M(m)$.  
Therefore, $M(m)$ is an irreducible $\tilde{A}_{g,n}(V)$-module.
\end{proof}
\section{The Lie algebra $\hat{V}[g]$}

Let $V$ be a vertex algebra equipped with an automorphism $g$ of order $T$.  
Let $t$ be an indeterminate, and consider the tensor product
\[
\mathcal{L}(V) = V \otimes \C\!\left[t^{1/T}, t^{-1/T}\right].
\]
Following \cite{L2}, we endow $\C\!\left[t^{1/T}, t^{-1/T}\right]$ with the structure of a vertex algebra whose vertex operator is defined by
\[
Y(f(t), z) g(t) = \bigl(e^{z\,  (t\frac{d}{dt})} f(t)\bigr) g(t).
\]
Consequently, $\mathcal{L}(V)$ becomes the tensor product of two vertex algebras and hence itself a vertex algebra \cite{LL1}.
The automorphism $g$ extends naturally to a vertex algebra automorphism of $\mathcal{L}(V)$ via
\[
g\bigl(a \otimes t^m\bigr) = e^{2\pi i m}\, (g a \otimes t^m), \qquad a \in V,\ m \in (1/T)\Z.
\]
Denote by $\mathcal{L}(V, g)$ the subspace of $g$-invariants; it is a vertex subalgebra of $\mathcal{L}(V)$. Explicitly,
\[
\mathcal{L}(V, g) = \bigoplus_{r=0}^{T-1} V^r \otimes t^{r/T} \C[t, t^{-1}].
\]
The $\D$-operator on $\mathcal{L}(V, g)$ is given by
$
\D_{\mathcal{L}} = \D \otimes 1 + 1 \otimes t \frac{d}{dt}.
$
As shown in \cite{B1}, the quotient
\[
\hat{V}[g] := \mathcal{L}(V, g) / \D_{\mathcal{L}} \mathcal{L}(V, g)
\]
carries a natural Lie algebra structure with bracket
\[
[u + \D_{\mathcal{L}} \mathcal{L}(V, g),\, v + \D_{\mathcal{L}} \mathcal{L}(V, g)] 
= u_0 v + \D_{\mathcal{L}} \mathcal{L}(V, g),
\qquad u, v \in \mathcal{L}(V, g).
\]
For notational convenience, we denote by $a(q)$ the image of $a \otimes t^q \in \mathcal{L}(V, g)$ in $\hat{V}[g]$.  
A direct computation yields the following:

\begin{lem}\label{lem5.1}
Let $a \in V^r$, $b \in V^s$, and $m, n \in \Z$. Then

{\rm(1)}
    $
    \bigl[a\bigl(m + {r}/{T}\bigr),\, b\bigl(n + {s}/{T}\bigr)\bigr]
    = \sum_{i=0}^{\infty} \frac{(m + r/T)^i}{i!}\, (a_i b)(m + n + {(r + s)}/{T});
    $

{\rm(2)} $\mathbf{1}(0)$ lies in the center of $\hat{V}[g]$.
 
\end{lem}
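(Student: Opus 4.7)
The plan is to reduce the bracket on $\hat{V}[g]$ to the $0$-th product in the tensor product vertex algebra $\mathcal{L}(V) = V \otimes \C[t^{1/T}, t^{-1/T}]$, and then compute that product by extracting $\Res_z$ from an explicit expression.

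First I would use the tensor product identity $Y(a \otimes f, z)(b \otimes h) = Y(a, z)b \otimes Y(f, z)h$, together with the given formula $Y(f(t), z)h(t) = \bigl(e^{z(t\,d/dt)}f(t)\bigr)h(t)$. Since $t\,d/dt$ acts on $t^{\alpha}$ as multiplication by $\alpha$, one has $Y(t^{m+r/T}, z)t^{n+s/T} = e^{z(m+r/T)}\,t^{m+n+(r+s)/T}$, and hence
\[
Y\!\bigl(a \otimes t^{m+r/T}, z\bigr)\bigl(b \otimes t^{n+s/T}\bigr)
= e^{z(m+r/T)} \sum_{k \in \Z} (a_k b)\, z^{-k-1} \otimes t^{m+n+(r+s)/T}.
\]
Expanding $e^{z(m+r/T)} = \sum_{j \geq 0} (m+r/T)^j z^j / j!$ and extracting $\Res_z$ (which picks out the coefficient of $z^{-1}$, forcing $j = k$) yields
\[
\bigl(a \otimes t^{m+r/T}\bigr)_0 \bigl(b \otimes t^{n+s/T}\bigr)
= \sum_{k \geq 0} \frac{(m+r/T)^k}{k!}\, a_k b \otimes t^{m+n+(r+s)/T},
\]
a sum which is finite by the truncation axiom $a_k b = 0$ for $k \gg 0$. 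Passing to the quotient $\hat{V}[g]$ via $[u, v] = u_0 v$ yields part~(1).

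Part~(2) then follows immediately by specializing: with $a = \mathbf{1}$, $r = 0$, and $m = 0$, only the $k = 0$ summand of the series in part~(1) is potentially nonzero, and it equals $(\mathbf{1}_0 b)(n + s/T) = 0$ by the vacuum axiom $\mathbf{1}_k b = \delta_{k,-1}b$. There is no substantive obstacle here — the only point deserving attention is that $a \otimes t^{m+r/T}$ and $b \otimes t^{n+s/T}$ genuinely lie in the $g$-invariant subalgebra $\mathcal{L}(V, g)$ (so that their images $a(m+r/T)$ and $b(n+s/T)$ in $\hat{V}[g]$ are defined), which holds precisely because $a \in V^r$ and $b \in V^s$ are paired with fractional powers of $t$ whose exponents match the eigenvalues of $g$.
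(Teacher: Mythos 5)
Your computation is correct and is precisely the ``direct computation'' the paper alludes to (it gives no written proof): extract $\operatorname{Res}_z$ of $Y(a\otimes t^{m+r/T},z)(b\otimes t^{n+s/T})=e^{z(m+r/T)}Y(a,z)b\otimes t^{m+n+(r+s)/T}$ to get the $0$-th product, then pass to the quotient defining the bracket. The specialization for part~(2) via $\mathbf{1}_k b=\delta_{k,-1}b$ is also fine.
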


We introduce a $(1/T)\Z$-grading on $\mathcal{L}(V, g)$ by setting
$
\deg(a \otimes t^n) = -n$ for $a \in V,\ n \in (1/T)\Z.
$
Since $\D_{\mathcal{L}}$ preserves this grading, $\D_{\mathcal{L}} \mathcal{L}(V, g)$ is a graded subspace, and thus $\hat{V}[g]$ inherits a natural $(1/T)\Z$-grading.  
Let $\hat{V}[g]_n$ denote the homogeneous component of degree $n$.  
By Lemma~\ref{lem5.1}, $\hat{V}[g]$ is a $(1/T)\Z$-graded Lie algebra, and we have the triangular decomposition
\[
\hat{V}[g] = \hat{V}[g]_{+} \oplus \hat{V}[g]_0 \oplus \hat{V}[g]_{-},
\]
where
$
\hat{V}[g]_{\pm} = \bigoplus_{0< n \in (1/T)\Z } \hat{V}[g]_{\pm n}.
$
Note that $\hat{V}[g]_0$ is spanned by elements of the form $a(0)$ with $a \in V^0$.  
From Lemma~\ref{lem5.1}(1), the Lie bracket is given by
$
[a(0), b(0)] = (a_0 b)(0)$ for $a, b \in V^0.
$
Consider the linear map $V^0 \to \hat{V}[g]_0$ sending $a$ to $a(0)$.
Its kernel is precisely $\D(V^0)$, and it induces an isomorphism of Lie algebras
$
V^0 / \D(V^0) \cong \hat{V}[g]_0,
$
where the Lie bracket on the quotient is defined by
\[
[a + \D(V^0),\, b + \D(V^0)] = a_0 b + \D(V^0), \qquad a, b \in V^0.
\]
From Lemma~\ref{lem3.3}, we obtain the following:

\begin{lem}\label{lem5.2}
The following statements hold:

{\rm(1)} If $r \neq 0$, then $V^r \subset \tilde{O}_{g,n}(V)$.

{\rm(2)} For any $u, v \in V^0$, we have the congruence
    \[
    u \bullet_{g,n} v - v \bullet_{g,n} u \equiv \Res_x\, Y(u, x)v \pmod{\tilde{O}_{g,n}(V)}.
    \]
 
\end{lem}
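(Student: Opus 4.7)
The plan is to derive both assertions from Lemma~\ref{lem3.3} specialized to the diagonal case $m = n$. In that case $\bm - \bn = 0$, and by definition $L_{n,n}(V) \subseteq \tilde{O}_{g,n}^\prime(V) \subseteq \tilde{O}_{g,n}(V)$, so congruences modulo either subspace will suffice for the statement.

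For part~(1), since $r \in \{0, 1, \ldots, T-1\}$ with $r \neq 0$, the hypothesis $r \not\equiv \bm - \bn \pmod{T}$ of Lemma~\ref{lem3.3}(2) holds. Applying that result with $m = n$ gives $V^r \subseteq \tilde{O}_{g,n,n}^\prime(V) = \tilde{O}_{g,n}^\prime(V) \subseteq \tilde{O}_{g,n}(V)$, as desired.

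For part~(2), I apply Lemma~\ref{lem3.3}(3) with $u, v \in V^0$ (so $r = s = 0$) and $m = p = n$. The three hypotheses $\bp - \bn \equiv r \pmod{T}$, $\bm - \bp \equiv s \pmod{T}$, and $m + n - p = n \geq 0$ are then trivially satisfied. Under this specialization both $\bullet_{g,m,p}^{\,n}$ and $\bullet_{g,m,m+n-p}^{\,n}$ collapse to $\bullet_{g,n}$, and the exponent $-1 + p - n$ reduces to $-1$, so the lemma yields
$$u \bullet_{g,n} v - v \bullet_{g,n} u \equiv \Res_x (1+x)^{-1}\, Y(u, \log(1+x))\, v \pmod{\tilde{O}_{g,n}(V)}.$$

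The final step is to identify the right-hand side with $\Res_x Y(u, x) v$. This is a formal change-of-variables identity: under $y = \log(1+x)$, equivalently $x = e^y - 1$, the factor $(1+x)^{-1}$ is precisely the Jacobian $dy/dx$, so $\Res_x (1+x)^{-1}\, Y(u, \log(1+x))\, v = \Res_y Y(u, y) v$. The substitution is valid because the truncation axiom forces $Y(u, z)v \in V((z))$ to have only finitely many negative powers, and it is the same type of substitution already used throughout Section~3 to rewrite the products $\bullet_{g,m,p}^{\,n}$ in their two equivalent forms. No genuine obstacle is expected; the lemma is essentially a bookkeeping consequence of Lemma~\ref{lem3.3} combined with this standard residue substitution.
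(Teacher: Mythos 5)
Your proposal is correct and follows exactly the route the paper intends: the paper presents Lemma~\ref{lem5.2} with only the remark that it follows from Lemma~\ref{lem3.3}, and your specialization to $m=p=n$ (giving $L_{n,n}(V)\subseteq\tilde O_{g,n,n}'(V)=\tilde O_{g,n}(V)$) together with the residue substitution $x=e^y-1$ supplies precisely the omitted details. No issues.
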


Combining the isomorphism $V^0 / \D(V^0) \cong \hat{V}[g]_0$ with Lemma~\ref{lem5.2}, we deduce:

\begin{lem}\label{lem5.3}
Let $\tilde{A}_{g,n}(V)_{\mathrm{Lie}}$ denote the Lie algebra obtained from the associative algebra $\tilde{A}_{g,n}(V)$ via the commutator bracket
$
[u, v] = u \bullet_{g,n} v - v \bullet_{g,n} u.
$
Then the assignment
\[
a(0) \longmapsto a + \tilde{O}_{g,n}(V)
\]
defines a surjective homomorphism of Lie algebras
$
\hat{V}[g]_0 \twoheadrightarrow \tilde{A}_{g,n}(V)_{\mathrm{Lie}}.
$
\end{lem}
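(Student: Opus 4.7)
The plan is to use the identification $\hat{V}[g]_0 \cong V^0/\D(V^0)$ established just before the lemma, and then check well-definedness, surjectivity, and the Lie bracket compatibility using Lemma~\ref{lem5.2} and the centerlessness of the bracket computation.

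First I would verify that the map $a(0) \mapsto a + \tilde{O}_{g,n}(V)$ is well-defined on $\hat{V}[g]_0$. Via the isomorphism $V^0/\D(V^0) \cong \hat{V}[g]_0$, this amounts to showing that $\D(V^0) \subseteq \tilde{O}_{g,n}(V)$. But by definition $L_{n,n}(V) = \operatorname{span}\{(\D + n - n)v \mid v \in V\} = \D V$, so $\D(V^0) \subseteq L_{n,n}(V) \subseteq \tilde{O}_{g,n}'(V) \subseteq \tilde{O}_{g,n}(V)$, and well-definedness follows.

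Next I would establish surjectivity. Given any class $v + \tilde{O}_{g,n}(V) \in \tilde{A}_{g,n}(V)$, decompose $v = \sum_{r=0}^{T-1} v^r$ according to the eigenspace decomposition $V = \bigoplus_r V^r$. By Lemma~\ref{lem5.2}(1), we have $v^r \in \tilde{O}_{g,n}(V)$ for every $r \neq 0$, so $v \equiv v^0 \pmod{\tilde{O}_{g,n}(V)}$, and $v^0 + \tilde{O}_{g,n}(V)$ is visibly the image of $v^0(0) \in \hat{V}[g]_0$. Hence the map is surjective.

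Finally, I would check that the map preserves the Lie bracket. Take $a, b \in V^0$. In $\hat{V}[g]_0$, Lemma~\ref{lem5.1}(1) with $m = n = 0$ and $r = s = 0$ gives
\[
[a(0), b(0)] = \sum_{i \geq 0} \frac{0^i}{i!}(a_i b)(0) = (a_0 b)(0),
\]
which the proposed map sends to $a_0 b + \tilde{O}_{g,n}(V) = \Res_x Y(a,x)b + \tilde{O}_{g,n}(V)$. On the other hand, the commutator in $\tilde{A}_{g,n}(V)_{\mathrm{Lie}}$ of the images is $a \bullet_{g,n} b - b \bullet_{g,n} a + \tilde{O}_{g,n}(V)$. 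These two expressions coincide modulo $\tilde{O}_{g,n}(V)$ by Lemma~\ref{lem5.2}(2), and the Lie homomorphism property is established.

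There is no substantive obstacle here: once Lemmas~\ref{lem5.1} and \ref{lem5.2} are in hand, the proof is a bookkeeping exercise combining the eigenspace decomposition with the bracket formula. The only mild subtlety is being careful that the map is defined on all of $\hat{V}[g]_0$, i.e.\ that we genuinely pass through $V^0/\D(V^0)$ and not merely $V^0$; this is handled by the initial well-definedness check.
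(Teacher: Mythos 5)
Your proposal is correct and follows exactly the route the paper intends: the paper derives Lemma~\ref{lem5.3} by combining the isomorphism $V^0/\D(V^0)\cong \hat{V}[g]_0$ with Lemma~\ref{lem5.2}, and your three checks (well-definedness via $\D(V^0)\subseteq L_{n,n}(V)\subseteq\tilde{O}_{g,n}(V)$, surjectivity via Lemma~\ref{lem5.2}(1), bracket compatibility via Lemma~\ref{lem5.1}(1) and Lemma~\ref{lem5.2}(2)) are precisely the bookkeeping the paper leaves implicit.
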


\section{The functor $\tilde{L}$}\label{sec6}

Fix an $\tilde{A}_{g }(V)$-module $U$,  
then $U$ naturally becomes a module for the Lie algebra $\tilde{A}_{g }(V)_{\mathrm{Lie}}$ via the commutator bracket.  
By Lemma~\ref{lem5.3}, this action lifts to a $\hat{V}[g]_0$-module structure on $U$.  
We further extend this to a module for the subalgebra
$
P_0 = \bigoplus_{p \geq0  } \hat{V}[g]_{-p} 
$
by letting $\hat{V}[g]_{-p}$ act trivially for all $p > 0$.
Define the induced module
\[
\tilde{M}(U) = \operatorname{Ind}_{P_0}^{\hat{V}[g]}(U)
= U(\hat{V}[g]) \otimes_{U(P_0)} U.
\]
Assigning degree $0$ to $U$, the $(1/T)\Z$-grading on $\hat{V}[g]$ induces a $(1/T)\N$-grading on $\tilde{M}(U)$, making it a graded $\hat{V}[g]$-module.  
By the Poincaré–Birkhoff–Witt theorem,
$
\tilde{M}(U)(i) = U(\hat{V}[g])_{\,i  }\, U$ for any $i \in (1/T)\N.
$

Let $U^* = \operatorname{Hom}_{\C}(U, \C)$,  we extend $U^*$ to all of $\tilde{M}(U)$ by  
$
\langle u', w \rangle = 0$ for all  $u' \in U^*,\ w \in \tilde{M}(U)(i),\ i \neq 0.
$
We define, for $v \in V^r$, the vertex operator on $\tilde{M}(U)$ by
\[
Y_{\tilde{M}(U)}(v, z) = \sum_{m \in r/T + \Z} v(m)\, z^{-m}.
\]
In the following, we simply write $Y$ for $Y_{\tilde{M}(U)}$; this should cause no confusion.

\begin{lem}\label{lem6.1}
    For all $a \in V^r$, $b \in V$, $u' \in U^*$, $u \in \tilde{M}(U)$, and $X \in U(\hat{V}[g])$, there exists a positive integer $k$ such that
    \begin{align}
        &\left\langle u',\, (z_0 + 1)^{k + r/T}\, X\, Y \!\bigl(a, (z_0 + 1) z_2\bigr)\, Y (b, z_2)\, u \right\rangle \nonumber \\
         = &\left\langle u',\, (1 + z_0)^{k + r/T}\, X\, Y \!\bigl(Y(a, \log(1 + z_0))\, b, z_2\bigr)\, u \right\rangle. \label{eq6.5}
    \end{align}
\end{lem}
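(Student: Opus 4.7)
The plan is to prove this by induction on the length $\ell(X)$ of $X$ as a PBW monomial in $U(\hat{V}[g])$, combined with a preliminary reduction to $u \in U$ within the base case. The central computational tool is the commutator identity derived from Lemma~\ref{lem5.1}, which at the level of generating series reads
\[
[c(n),\, Y(v, z)] \;=\; \sum_{i \geq 0} \frac{n^{i}}{i!}\, z^{n}\, Y(c_i v, z),
\qquad c \in V^{r_c},\ n \in r_c/T + \mathbb{Z}.
\]

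For the base case $X = 1$, I would use the PBW isomorphism $\tilde{M}(U) \cong U(\hat{V}[g]_+) \otimes U$ to write $u = X_0 u_0$ with $X_0 \in U(\hat{V}[g]_+)$ and $u_0 \in U$, then iteratively commute generators of $X_0$ past $Y(a, (z_0+1)z_2)$, $Y(b, z_2)$, and $Y(Y(a, \log(1+z_0))b, z_2)$. At each step, the term in which a factor of $X_0$ ends up on the outside has strictly positive degree (since $\tilde{M}(U)$ is non-negatively graded and $X_0$ strictly raises degree) and vanishes under the pairing with $u'$; the commutator corrections shorten $X_0$ at the cost of replacing $a$ by $c_i a$ or $b$ by $c_i b$ and picking up scalars $((z_0+1)z_2)^n$ or $z_2^n$. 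Iterating reduces to $u \in U$.

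For $u \in U$, both sides admit explicit mode expansions: since $\hat{V}[g]_-$ annihilates $u$, the LHS pairing becomes $\sum_{p \geq 0,\, p \in r/T + \mathbb{Z}} \langle u', a(p) b(-p) u \rangle (z_0+1)^{-p}$, where $a(p) b(-p) u = \sum_i \frac{p^i}{i!}(a_i b)(0) u$ for $p > 0$ via Lemma~\ref{lem5.1}, with $(a_i b)(0)$ acting on $U$ through the $\tilde{A}_g(V)$-module structure via Lemma~\ref{lem5.3}; the RHS reduces to $\sum_n \langle u', (a_n b)(0) u \rangle (\log(1+z_0))^{-n-1}$. The equality, after multiplication by $(z_0+1)^{k+r/T}$ for sufficiently large $k$, is a formal series identity traceable to the residue definition of $\bullet_g$. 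The inductive step with $X = c(n) X'$ proceeds by commuting $c(n)$ past the vertex operators on both sides, producing (i) a main term with $X'$ on the left and $c(n) u$ on the right, to which the inductive hypothesis applies directly, and (ii) correction terms with $Y(a, \cdot)$ replaced by $Y(c_i a, \cdot)$ or $Y(b, \cdot)$ by $Y(c_i b, \cdot)$, scaled by $((z_0+1)z_2)^n$ or $z_2^n$, each handled by the inductive hypothesis with modified data; taking the maximum of the finitely many $k$'s yields a uniform choice.

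The main obstacle is the careful bookkeeping of the exponent $k + r/T$ of the prefactor $(z_0+1)^{k+r/T}$ across the induction: replacing $a$ by $c_i a$ shifts the twisted grading from $r/T$ to $(r + r_c)/T$, and the commutators introduce additional $(z_0+1)^n$ factors, and all these shifts must be absorbed into a single $k$ working uniformly for the entire finite list of correction terms. A secondary subtlety is the base-case formal series identity matching the $(z_0+1)^{-p}$- and $(\log(1+z_0))^{-n-1}$-expansions, whose verification relies essentially on the explicit residue formulas defining $\bullet_g$ in $\tilde{A}_g(V)$.
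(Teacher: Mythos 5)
Your overall architecture coincides with the paper's: commute elements of $\hat V[g]$ (both those hidden inside $u$ and those making up $X$) past the two vertex operators using the bracket from Lemma~\ref{lem5.1}, reducing everything to the case $X=1$, $u\in U$, where the identity is checked against the $\tilde A_g(V)$-module structure of $U$; the paper organizes this as its Claim~4 plus a final induction on $X$, and your bookkeeping concerns about the exponent $k+r/T$ and the uniform choice of $k$ over the finitely many correction terms are exactly the issues the paper resolves with the identity \eqref{eq6.4} and the choice $k+r/T+m-k_1>k_2+(r+s)/T$.

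The genuine gap is in your base case. Matching mode expansions as you describe only controls the coefficients of $z_0^{\,j}$ for $j\ge -1$ in the (unmultiplied) identity: the coefficients of $z_0^{\,j}$ with $j\ge 0$ follow from the commutator formula together with the fact that the positive modes of $a$ annihilate $u$ (the paper's Claim~1), and the coefficient of $z_0^{-1}$ is the \emph{single} place where the $\tilde A_g(V)$-module axiom enters, via $o(a)o(b)u=o(a\bullet_g b)u$ and the residue formula for $\bullet_g$ (Claim~2). The coefficients of $z_0^{-k}$ for $k\ge 2$ are \emph{not} obtained by this matching, nor by multiplying through by $(z_0+1)^{k+r/T}$ — each such coefficient of the multiplied identity still involves arbitrarily negative coefficients of the unmultiplied one. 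The paper needs a separate downward induction on $k$ (its Claim~3), which replaces $a$ by $\D a$, integrates by parts in $z_0$ using $\Res_{z_0}(f'g+fg')=0$ and $Y(\D a,x)=\frac{d}{dx}Y(a,x)$, and thereby trades the coefficient of $z_0^{-k-1}$ for already-known coefficients. This $\D$-operator induction is a necessary idea that your sketch does not contain; describing the remaining verification as ``a formal series identity traceable to the residue definition of $\bullet_g$'' understates what has to be proved. A secondary caution: the series $\sum_{p\ge 0}c_p(z_0+1)^{-p}$ you write down is not a well-defined element of $\C[[z_0]]$ coefficientwise; the paper sidesteps this by only ever evaluating $\Res_{z_0}z_0^{\,i}(\cdots)$ through the delta-function substitution, and your argument should do the same.
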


\begin{proof}

\noindent\textbf{Claim 1.} For all $a \in V^r$, $b \in V$, $u' \in U^*$, $u \in U$ and $i,j \in \N$,
\begin{align*}
&\Res_{z_0}\, z_0^{\,i} (z_0 + 1)^{-1+ \delta_{0}(r) + r/T + j}
   \left\langle u',\, Y\bigl(a, (z_0 + 1) z_2\bigr)\, Y(b, z_2)\, u \right\rangle \\
  =& \Res_{z_0}\, z_0^{\,i} (1 + z_0)^{-1+ \delta_{0}(r) + r/T + j}
   \left\langle u',\, Y\bigl(Y(a, \log(1 + z_0))\, b, z_2\bigr)\, u \right\rangle.
\end{align*}
Indeed, since $j \geq 0$, the mode
$
a\bigl(-1+ \delta_{0}(r) + r/T + j\bigr)
$
lies in $\bigoplus_{p > 0} \hat{V}[g]_{-p}$ and therefore annihilates $u$.  
Consequently, for all $i \in \N$ we have
\[
\Res_{z_1}\, (z_1 - z_2)^i\, z_1^{-1+ \delta_{0}(r) + r/T + j}\,
Y(b, z_2)\, Y(a, z_1)\, u = 0.
\]
From Lemma~\ref{lem5.1}, we have the commutator formula
\begin{align*}
    \bigl[ Y(a, z_1),\, Y(b, z_2) \bigr]
= &\Res_{z_0}\, z_2\, z_1^{-1}\, \delta\!\left( \frac{z_2(1 + z_0)}{z_1} \right)
\left( \frac{z_2(1 + z_0)}{z_1} \right)^{r/T}\\
&
\times Y\bigl( Y(a, \log(1 + z_0))\, b,\, z_2 \bigr).
\end{align*}
Then we obtain
\begin{align*}
&\operatorname{Res}_{z_0} z_0^iz_2^{i+1}\left((z_0+1)z_2\right)^{-1+ \delta_{0}(r) + r/T+j} Y\left(a, (z_0+1)z_2\right) Y\left(b, z_2\right) u  \\
=&\operatorname{Res}_{z_0}\Res_{z_1}z_1^{-1}\delta\left(\frac{(z_0+1)z_2}{z_1}\right) z_0^iz_2^{i+1}\left((z_0+1)z_2\right)^{-1+ \delta_{0}(r) + r/T+j} \\
&\quad\quad\quad\quad\quad\quad\quad\quad\quad\quad\quad\quad\quad\quad\quad \times Y\left(a, (z_0+1)z_2\right)Y\left(b, z_2\right) u  \\
=&\operatorname{Res}_{z_0}\Res_{z_1}(z_0z_2)^{-1}\delta\left(\frac{z_1-z_2}{z_0z_2}\right) (z_1-z_2)^iz_2z_1^{-1+ \delta_{0}(r) + r/T+j}  Y\left(a, z_1\right)Y\left(b, z_2\right) u  \\
= & \operatorname{Res}_{z_1}\left(z_1-z_2\right)^i z_1^{-1+ \delta_{0}(r) + r/T+j} Y\left(a, z_1\right) Y\left(b, z_2\right) u \\
= & \operatorname{Res}_{z_1}\left(z_1-z_2\right)^i z_1^{-1+ \delta_{0}(r) + r/T+j} Y\left(a, z_1\right) Y\left(b, z_2\right) u \\
& -\operatorname{Res}_{z_1}\left(z_1-z_2\right)^i z_1^{-1+ \delta_{0}(r) + r/T+j} Y\left(b, z_2\right) Y\left(a, z_1\right) u \\
= & \operatorname{Res}_{z_1}\left(z_1-z_2\right)^i z_1^{-1+ \delta_{0}(r) + r/T+j}\left[Y\left(a, z_1\right), Y\left(b, z_2\right)\right] u \\
= & \operatorname{Res}_{z_0} \operatorname{Res}_{z_1}\left(z_1-z_2\right)^i z_1^{-1+ \delta_{0}(r) + r/T+j}\\
&\quad z_2z_1^{-1} \delta\left(\frac{z_2(1+z_0)}{z_1}\right)\left(\frac{z_2(1+z_0)}{z_1}\right)^{r/T} Y\left(Y\left(u, \log(1+z_0)\right) v, z_2\right)  \\
= & \operatorname{Res}_{z_0} (z_2z_0)^iz_2\left((1+z_0)z_2\right)^{-1+ \delta_{0}(r) + r/T+j} Y\left(Y\left(a, \log(1+z_0)\right) b, z_2\right) u.
\end{align*}

\noindent\textbf{Claim 2.} For all $a \in V^r$, $b \in V$, $u' \in U^*$, $u \in U$ and $j \in \N$,
\begin{align*}
&\Res_{z_0}\, z_0^{-1} (z_0 + 1)^{-1+ \delta_{0}(r) + r/T + j}
   \left\langle u',\, Y\bigl(a, (z_0 + 1) z_2\bigr)\, Y(b, z_2)\, u \right\rangle \\
  = &\Res_{z_0}\, z_0^{-1} (1 + z_0)^{-1+ \delta_{0}(r) + r/T + j}
   \left\langle u',\, Y\bigl(Y(a, \log(1 + z_0))\, b, z_2\bigr)\, u \right\rangle.
\end{align*}
We now use Claim~1 to compute the left-hand side. Expanding $(z_0 + 1)^j$ via the binomial theorem yields
\begin{align*}
&\Res_{z_0}\, z_0^{-1} (z_0 + 1)^{-1+ \delta_{0}(r) + r/T + j}
   \left\langle u',\, Y\bigl(a, (z_0 + 1) z_2\bigr)\, Y(b, z_2)\, u \right\rangle \\
  =& \sum_{k=0}^{j} \binom{j}{k}
   \Res_{z_0}\, z_0^{k - 1} (z_0 + 1)^{-1+ \delta_{0}(r) + r/T}
   \left\langle u',\, Y\bigl(a, (z_0 + 1) z_2\bigr)\, Y(b, z_2)\, u \right\rangle \\
 =& \sum_{k=1}^{j} \binom{j}{k}
   \Res_{z_0}\, z_0^{k - 1} (1 + z_0)^{-1+ \delta_{0}(r) + r/T}
   \left\langle u',\, Y\bigl(Y(a, \log(1 + z_0))\, b, z_2\bigr)\, u \right\rangle \\
 & + \Res_{z_0}\, z_0^{-1} (z_0+1  )^{-1+ \delta_{0}(r) + r/T}
   \left\langle u',\, Y\bigl(a, (z_0 + 1) z_2\bigr)\, Y(b, z_2)\, u \right\rangle.
\end{align*}
Thus, it suffices to prove that
\begin{align*}
&\Res_{z_0}\, z_0^{-1} (z_0 + 1)^{-1+ \delta_{0}(r) + r/T}
   \left\langle u',\, Y\bigl(a, (z_0 + 1) z_2\bigr)\, Y(b, z_2)\, u \right\rangle \\
  = &\Res_{z_0}\, z_0^{-1} (1 + z_0)^{-1+ \delta_{0}(r) + r/T}
   \left\langle u',\, Y\bigl(Y(a, \log(1 + z_0))\, b, z_2\bigr)\, u \right\rangle.
\end{align*}
Without loss of generality, assume that $b \in V^s$ with $r + s \equiv 0 \pmod{T}$. Then we obtain
\begin{align}
&\Res_{z_0}\, z_0^{-1} (1 + z_0)^{-1+ \delta_{0}(r) + r/T}
   \left\langle u',\, Y\bigl(Y(a, \log(1 + z_0))\, b, z_2\bigr)\, u \right\rangle \nonumber\\
 = &\left\langle u',\, o\!\left(
      \Res_{z_0}\, \frac{(1 + z_0)^{-1+ \delta_{0}(r) + r/T}}{z_0}
      Y(a, \log(1 + z_0))\, b
   \right) u \right\rangle,\label{eqclaim2}
\end{align}
since $\langle u',\, \tilde{M}(U)(m) \rangle = 0$ for all $m \neq 0$. On the other hand, note that $o_p(b) u = 0$ whenever $p < 0$. Hence,
\begin{align}
&\Res_{z_0}\, z_0^{-1} (z_0 + 1)^{-1+ \delta_{0}(r) + r/T}
   \left\langle u',\, Y\bigl(a, (z_0 + 1) z_2\bigr)\, Y(b, z_2)\, u \right\rangle \nonumber \\
   = & \operatorname{Res}_{z_1}\left(z_1-z_2\right)^{-1} z_1^{-1+ \delta_{0}(r) + r/T }z_2^{-(-1+ \delta_{0}(r) + r/T )} \left\langle u',\, Y(a, z_1)\, Y(b, z_2)\, u \right\rangle \nonumber \\
  = &\left\langle u',\, 
   \sum_{i \in \N} o_{\,i - (-1+ \delta_{0}(r) + r/T)}(a)
   \sum_{{0\leq m \in r/T + \Z }} o_m(b)\,
   z_2^{-(-1+ \delta_{0}(r) + r/T) + i + m} u
   \right\rangle \nonumber \\
 =& \left\langle u',\, 
   \sum_{i \in \N} o_{\,i - (-1+ \delta_{0}(r) + r/T)}(a)\,
   o_{-i -1+ \delta_{0}(r) + r/T}(b)\, u
   \right\rangle. \label{eq6.1}
\end{align}
When $r=0$,
$
    \eqref{eq6.1}=\left\langle u^\prime,o(a)o(b)u\right\rangle
=\left\langle u^{\prime}, o\left(\operatorname{Res}_{z_0}{z_0^{-1}} Y(a, \log(1+z_0)) b \right) u\right\rangle=\eqref{eqclaim2}.
$
When $r>0$, $\eqref{eqclaim2}=\eqref{eq6.1}=0.$

  \noindent\textbf{Claim 3.} For all $a \in V^r$, $b \in V$, $u' \in U^*$, $u \in U$ and $j \in \N$,
\begin{align*}
&\left\langle u',\, (z_0 + 1)^{-1+ \delta_{0}(r) + r/T + j}\,
   Y\!\bigl(a, (z_0 + 1) z_2\bigr)\,
   Y(b, z_2)\, u \right\rangle \\
  = &\left\langle u',\, (1 + z_0)^{-1+ \delta_{0}(r) + r/T + j}\,
   Y\!\bigl(Y(a, \log(1 + z_0))\, b, z_2\bigr)\, u \right\rangle.
\end{align*}
To prove this identity, it suffices to show that for every $m \in \Z$,
\begin{align*}
&\Res_{z_0}\, z_0^{\,m} (z_0 + 1)^{-1+ \delta_{0}(r) + r/T + j}
   \left\langle u',\, Y\bigl(a, (z_0 + 1) z_2\bigr)\, Y(b, z_2)\, u \right\rangle \\
  = &\Res_{z_0}\, z_0^{\,m} (1 + z_0)^{-1+ \delta_{0}(r) + r/T + j}
   \left\langle u',\, Y\bigl(Y(a, \log(1 + z_0))\, b, z_2\bigr)\, u \right\rangle.
\end{align*}
This equality holds for all $m \geq -1$ by Claims~1 and~2.  
Now let $m = -k$ with $k \in \Z_{\geq1}$, and proceed by induction on $k$.
Assume the statement holds for all integers greater than $-k$.  
Applying the induction hypothesis to $\D a$ yields
\begin{align*}
&\Res_{z_0}\, z_0^{-k} (z_0 + 1)^{-1+ \delta_{0}(r) + r/T + j}
   \left\langle u',\, Y\bigl(\D a, (z_0 + 1) z_2\bigr)\, Y(b, z_2)\, u \right\rangle \\
 = &\Res_{z_0}\, z_0^{-k} (1 + z_0)^{-1+ \delta_{0}(r) + r/T + j}
   \left\langle u',\, Y\bigl(Y(\D a, \log(1 + z_0))\, b, z_2\bigr)\, u \right\rangle.
\end{align*}
Using the residue identity
$
\Res_z\, (f'(z) g(z) +  f(z) g'(z)) = 0,
$
together with the $\D$-derivation property \eqref{daozi} of vertex operators, then we have
\begin{align*}
&\operatorname{Res}_{z_0} z_0^{-k}\left(z_0+1\right)^{-1+ \delta_{0}(r) + r/T+j}\left\langle u^{\prime}, Y\left(\D a, (z_0+1)z_2\right) Y\left(b, z_2\right) u\right\rangle  \\
= & -\operatorname{Res}_{z_0}\left(\frac{\partial}{\partial z_0} z_0^{-k}\left(z_0+1\right)^{ \delta_{0}(r) + r/T+j}\right)\left\langle u^{\prime}, Y\left(a, (z_0+1)z_2\right) Y\left(b, z_2\right) u\right\rangle \\
= & \operatorname{Res}_{z_0} k z_0^{-k-1}\left(z_0+1\right)^{ \delta_{0}(r) + r/T+j}\left\langle u^{\prime}, Y\left(a, (z_0+1)z_2\right) Y\left(b, z_2\right) u\right\rangle \\
& -\operatorname{Res}_{z_0}( \delta_{0}(r) + r/T+j) z_0^{-k}\left(z_0+1\right)^{ \delta_{0}(r) + r/T-1+j} \\
& \times\left\langle u^{\prime}, Y\left(a, (z_0+1)z_2\right) Y\left(b, z_2\right) u\right\rangle \\
= & \operatorname{Res}_{z_0} k z_0^{-k-1} \left(z_0+1\right)^{-1+ \delta_{0}(r) + r/T+j}\left\langle u^{\prime}, Y\left(a,( z_0+1)z_2\right) Y\left(b, z_2\right) u\right\rangle \\
& +\operatorname{Res}_{z_0} k z_0^{-k}\left(z_0+1\right)^{-1+ \delta_{0}(r) + r/T+j}\left\langle u^{\prime}, Y\left(a, (z_0+1)z_2\right) Y\left(b, z_2\right) u\right\rangle \\
& -\operatorname{Res}_{z_0}( \delta_{0}(r) + r/T+j) z_0^{-k}\left(1+z_0\right)^{-1+ \delta_{0}(r) + r/T+j} \\
& \times\left\langle u^{\prime}, Y\left(Y\left(a, \log(1+z_0)\right) b, z_2\right) u\right\rangle \\
= & \operatorname{Res}_{z_0} k z_0^{-k-1} \left(z_0+1\right)^{-1+ \delta_{0}(r) + r/T+j}\left\langle u^{\prime}, Y\left(a, (z_0+1)z_2\right) Y\left(b, z_2\right) u\right\rangle \\
& +\operatorname{Res}_{z_0} k z_0^{-k}\left(1+z_0\right)^{-1+ \delta_{0}(r) + r/T+j}\left\langle u^{\prime}, Y\left(Y\left(a, \log(1+z_0)\right) b, z_2\right) u\right\rangle \\
& -\operatorname{Res}_{z_0}( \delta_{0}(r) + r/T+j) z_0^{-k}\left(1+z_0\right)^{-1+ \delta_{0}(r) + r/T+j} \\
& \times\left\langle u^{\prime}, Y\left(Y\left(a, \log(1+z_0)\right) b, z_2\right) u\right\rangle
\end{align*}
and
\begin{align*}
&\operatorname{Res}_{z_0}  z_0^{-k}\left(1+z_0\right)^{-1+ \delta_{0}(r) + r/T+j}\left\langle u^{\prime}, Y\left(Y\left(\D a, \log(1+z_0)\right) b, z_2\right) u\right\rangle \\
= & -\operatorname{Res}_{z_0}\left(\frac{\partial}{\partial z_0} z_0^{-k}\left(1+z_0\right)^{ \delta_{0}(r) + r/T+j}\right)\left\langle u^{\prime}, Y\left(Y\left(a, \log(1+z_0)\right) b, z_2\right) u\right\rangle \\
= & \left.\operatorname{Res}_{z_0} k z_0^{-k-1}\left(1+z_0\right)^{ \delta_{0}(r) + r/T+j}\left\langle u^{\prime}, Y\left(Y\left(a, \log(1+z_0)\right) b, z_2\right)\right\rangle u\right\rangle \\
& -\operatorname{Res}_{z_0}( \delta_{0}(r) + r/T+j) z_0^{-k}\left(1+z_0\right)^{-1+ \delta_{0}(r) + r/T+j} \\
& \times\left\langle u^{\prime}, Y\left(Y\left(a, \log(1+z_0)\right) b, z_2\right) u\right\rangle \\
= & \operatorname{Res}_{z_0} k  z_0^{-k-1}\left(1+z_0\right)^{-1+ \delta_{0}(r) + r/T+j}\left\langle u^{\prime}, Y\left(Y\left(a, \log(1+z_0)\right) b, z_2\right) u\right\rangle \\
& +\operatorname{Res}_{z_0} k z_0^{-k}\left(1+z_0\right)^{-1+ \delta_{0}(r) + r/T+j}\left\langle u^{\prime}, Y\left(Y\left(a, \log(1+z_0)\right) b, z_2\right) u\right\rangle \\
& -\operatorname{Res}_{z_0}( \delta_{0}(r) + r/T+j) z_0^{-k}\left(1+z_0\right)^{-1+ \delta_{0}(r) + r/T+j} \\
& \times\left\langle u^{\prime}, Y\left(Y\left(a, \log(1+z_0)\right) b, z_2\right) u\right\rangle .
\end{align*}
This yields the identity
\begin{align*}
& \operatorname{Res}_{z_0} z_0^{-k-1}\left(z_0+1\right)^{-1+ \delta_{0}(r) + r/T+j}\left\langle u^{\prime}, Y\left(a, (z_0+1)z_2\right) Y\left(b, z_2\right) u\right\rangle \\
 =&\operatorname{Res}_{z_0} z_0^{-k-1}\left(1+z_0\right)^{-1+ \delta_{0}(r) + r/T+j}\left\langle u^{\prime}, Y\left(Y\left(a, \log(1+z_0)\right) b, z_2\right) u\right\rangle.
\end{align*}

\noindent\textbf{Claim 4.} For all $a \in V^r$, $b \in V$, $u' \in U^*$ and $u \in \tilde{M}(U)$, there exists a positive integer $k$ such that
\begin{align}
&\left\langle u',\, (z_0 + 1)^{k + r/T}\,
   Y \!\bigl(a, (z_0 + 1) z_2\bigr)\,
   Y (b, z_2)\, u \right\rangle \nonumber \\
 =& \left\langle u',\, (1 + z_0)^{k + r/T}\,
   Y \!\bigl(Y(a, \log(1 + z_0))\, b, z_2\bigr)\, u \right\rangle.
\label{eq6.3}
\end{align}
Let $Z \subseteq \tilde{M}(U)$ denote the subspace consisting of all vectors $u$ such that  the identity~\eqref{eq6.3} holds.  
Our goal is to show that $Z = \tilde{M}(U)$.  
By Claim~3, we have $U \subseteq Z$. Since $\tilde{M}(U)$ is generated by $U$ as a $\hat{V}[g]$-module, it suffices to prove that $Z$ is stable under the action of $\hat{V}[g]$, i.e.,
$
\hat{V}[g]\, Z \subseteq Z.
$

To this end, let $u \in Z$, $c \in V^s$, and $0>m \in (1/T)\Z$.  
Choose a positive integer $k_1$ such that $c_i a = 0$ for all $i \geq k_1$.  
Since $u \in Z$, for each nonnegative integer $i$ there exists a positive integer $k_2$  such that
\begin{align*}
&\left\langle u',\, (z_0 + 1)^{k_2 + (r+s)/T}\,
   Y(c_i a, (z_0 + 1) z_2)\, Y(b, z_2)\, u \right\rangle \\
  = &\left\langle u',\, (1 + z_0)^{k_2 + (r+s)/T}\,
   Y\bigl(Y(c_i a, \log(1 + z_0))\, b, z_2\bigr)\, u \right\rangle, \\
&\left\langle u',\, (z_0 + 1)^{k_2 + r/T}\,
   Y(a, (z_0 + 1) z_2)\, Y(c_i b, z_2)\, u \right\rangle \\
  =& \left\langle u',\, (1 + z_0)^{k_2 + r/T}\,
   Y\bigl(Y(a, \log(1 + z_0))\, c_i b, z_2\bigr)\, u \right\rangle.
\end{align*}
We will also need the following elementary identity: for any $j \in \N$,
\begin{align}
   \sum_{i \geq j} \frac{m^i}{i!} \binom{i}{j} \bigl(\log(1 + z_0)\bigr)^{i - j}
= \frac{m^j}{j!} (1 + z_0)^m.
\label{eq6.4} 
\end{align}
Now choose a positive integer $k$ such that
$
k + {r}/{T} + m - k_1 > k_2 + {(r + s)}/{T}.
$
Using the identity~\eqref{eq6.4} together with the commutator formula (a consequence of Lemma~\ref{lem5.1}(1)):
\begin{align*}
  \bigl[ a(m),\, Y(b, z_2) \bigr]
= z_2^{\,m} \sum_{i=0}^{\infty} \frac{m^i}{i!}\, Y(a_i b, z_2), 
\end{align*}
we obtain
\begin{align*}
& \left\langle u^{\prime},\left(z_0+1\right)^{k+\frac{r}{T}} Y\left(a, (z_0+1)z_2\right) Y\left(b, z_2\right) c(m) u\right\rangle \\
= & \left\langle u^{\prime},\left(z_0+1\right)^{k+\frac{r}{T}} c(m) Y\left(a, (z_0+1)z_2\right) Y\left(b, z_2\right) u\right\rangle \\
& -\left\langle u^{\prime},\sum_{i=0}^{\infty}\frac{m^i}{i!}\left(z_0+1\right)^{k+\frac{r}{T}+m}z_2^m Y\left(c_i a, (z_0+1)z_2\right) Y\left(b, z_2\right) u\right\rangle \\
& -\left\langle u^{\prime},\sum_{i=0}^{\infty}\frac{m^i}{i!} z_2^{m}\left(z_0+1\right)^{k+\frac{r}{T}} Y\left(a,( z_0+1)z_2\right) Y\left(c_i b, z_2\right) u \right\rangle\\
=& -\left\langle u^{\prime},\sum_{i=0}^{\infty}\frac{m^i}{i!}\left(1+z_0\right)^{k+\frac{r}{T}+m}z_2^m Y\left(Y\left(c_i a, \log(1+z_0)\right) b, z_2\right) u \right\rangle\\
& -\left\langle u^{\prime},\sum_{i=0}^{\infty}\frac{m^i}{i!} z_2^{m}\left(1+z_0\right)^{k+\frac{r}{T}} Y\left(Y\left(a,\log( 1+z_0)\right) c_i b, z_2\right) u\right\rangle \\
=& -\left\langle u^{\prime},\sum_{i=0}^{\infty}\frac{m^i}{i!}\left(1+z_0\right)^{k+\frac{r}{T}+m}z_2^m Y\left(Y\left(c_i a, \log(1+z_0)\right) b, z_2\right) u \right\rangle\\
& -\left\langle u^{\prime},\sum_{i=0}^{\infty}\frac{m^i}{i!} z_2^{m}\left(1+z_0\right)^{k+\frac{r}{T}} Y\left(c_iY\left(a,\log( 1+z_0)\right)  b, z_2\right) u \right\rangle\\
& +\left\langle u^{\prime},\sum_{i=0}^{\infty} \sum_{j=0}^{\infty}\frac{m^i}{i!}\binom{i}{j} z_2^{m}\left(1+z_0\right)^{k+\frac{r}{T}} {\log(1+z_0)}^{i-j} Y\left(Y\left(c_j a, \log(1+z_0)\right) b, z_2\right) u \right\rangle\\
=& -\left\langle u^{\prime},\sum_{i=0}^{\infty}\frac{m^i}{i!}\left(1+z_0\right)^{k+\frac{r}{T}+m}z_2^m Y\left(Y\left(c_i a, \log(1+z_0)\right) b, z_2\right) u \right\rangle\\
& -\left\langle u^{\prime},\sum_{i=0}^{\infty}\frac{m^i}{i!} z_2^{m}\left(1+z_0\right)^{k+\frac{r}{T}} Y\left(c_iY\left(a,\log( 1+z_0)\right)  b, z_2\right) u \right\rangle\\
& +\left\langle u^{\prime},\sum_{j=0}^{\infty} \sum_{i\geq j}\frac{m^i}{i!}\binom{i}{j} z_2^{m}\left(1+z_0\right)^{k+\frac{r}{T}} {\log(1+z_0)}^{i-j} Y\left(Y\left(c_j a, \log(1+z_0)\right) b, z_2\right) u \right\rangle\\
=& -\left\langle u^{\prime},\sum_{i=0}^{\infty}\frac{m^i}{i!}\left(1+z_0\right)^{k+\frac{r}{T}+m}z_2^m Y\left(Y\left(c_i a, \log(1+z_0)\right) b, z_2\right) u \right\rangle\\
& -\left\langle u^{\prime},\sum_{i=0}^{\infty}\frac{m^i}{i!} z_2^{m}\left(1+z_0\right)^{k+\frac{r}{T}} Y\left(c_iY\left(a,\log( 1+z_0)\right)  b, z_2\right) u\right\rangle \\
& +\left\langle u^{\prime},\sum_{j=0}^{\infty}\frac{m^j}{j!} z_2^{m}\left(1+z_0\right)^{k+\frac{r}{T}+m}  Y\left(Y\left(c_j a, \log(1+z_0)\right) b, z_2\right) u \right\rangle\\
=& -\left\langle u^{\prime},\sum_{i=0}^{\infty}\frac{m^i}{i!} z_2^{m}\left(1+z_0\right)^{k+\frac{r}{T}} Y\left(c_iY\left(a,\log( 1+z_0)\right)  b, z_2\right) u\right\rangle \\
=& -\left\langle u^{\prime},\left(1+z_0\right)^{k+\frac{r}{T}} [c(m),Y\left(Y\left(a,\log( 1+z_0)\right)  b, z_2\right)] u\right\rangle \\
= & \left\langle u^{\prime},\left(1+z_0\right)^{k+\frac{r}{T}}  Y\left(Y( a, \log(1+z_0)\right) b, z_2)c(m) u\right\rangle.
\end{align*}
From the proof of Claim 4, we see that \eqref{eq6.5} holds with $X d(i)$ ($d\in V,i\in(1/T)\Z$) in place of $X$. It then follows from induction.
\end{proof}

Set
\[
J (U) = \left\{\, v \in \tilde{M} (U) \;\middle|\;
\langle u',\, x v \rangle = 0 \text{ for all } u' \in U^* \text{ and all } x \in U(\hat{V}[g]) \,\right\}.
\]
Then $J (U)$ is a graded $U(\hat{V}[g])$-submodule of $\tilde{M} (U)$.
Let $W \subseteq \tilde{M} (U)$ be the subspace spanned linearly by the coefficients (in $z_0$ and $z_2$) of the formal series
\begin{align}
& (z_0 + 1)^{-1+ \delta_{0}(r) + r/T}\,
   Y\bigl(a, (z_0 + 1) z_2\bigr)\, Y(b, z_2)\, u \nonumber \\
&\quad - (1 + z_0)^{-1+ \delta_{0}(r) + r/T}\,
   Y\bigl(Y(a, \log(1 + z_0))\, b, z_2\bigr)\, u,
\label{eq6.6}
\end{align}
for all $a \in V^r$, $b \in V$, and $u \in U$.
Define the quotient module
\[
\bar{M} (U) = \tilde{M} (U) \big/ U(\hat{V}[g])\, W.
\]
The first main result of this section is the following:
\begin{thm}\label{thm6.3}
    
The space $\bar{M} ({U})=\bigoplus_{m \in(1/T)\N} \bar{M} ({U})({m})$ is a $(1/T)\mathbb{N}$-graded $g$-twisted $\phi$-coordinated $V$-module with $\bar{M}(U)(0)=U$ and with the following universal property: for any $g$-twisted $\phi$-coordinated $V$-module $M$ and any $\tilde{A}_{g}(V)$-morphism $\varphi: U \rightarrow \tilde{\Omega}_0(M)$, there is a unique morphism $\bar{\varphi}: \bar{M}({U}) \rightarrow {M}$ of $g$-twisted $\phi$-coordinated $V$-modules which extends $\varphi$.
\end{thm}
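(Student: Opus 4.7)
The plan is to verify the three axioms of a $(1/T)\mathbb{N}$-graded $g$-twisted $\phi$-coordinated $V$-module for $\bar{M}(U)$, establish $\bar{M}(U)(0)=U$, and then derive the universal property from the universality of induction; the weak $\phi$-associativity axiom is the main content, while the remaining axioms and the universal property are routine. The triangular decomposition of $\hat{V}[g]$ combined with PBW makes $\tilde{M}(U)\cong U(\hat{V}[g]_+)\otimes U$ a $(1/T)\mathbb{N}$-graded space with $\tilde{M}(U)(0)=U$; since $v(m)$ shifts degree by $-m$, $v(m)w$ vanishes for $m$ large on any homogeneous $w$, so $Y_{\tilde{M}(U)}(v,x)w\in\tilde{M}(U)((x^{1/T}))$. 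Axiom~(1) of Definition~\ref{defi_coor_module} reduces to $\mathbf{1}(m)=0$ for $m\ne 0$, which follows from $\mathcal{D}_{\mathcal{L}}(\mathbf{1}\otimes t^m)=m(\mathbf{1}\otimes t^m)$, together with $\mathbf{1}(0)=\operatorname{id}$, which uses the centrality of $\mathbf{1}(0)$ from Lemma~\ref{lem5.1}(2) and the fact that $\mathbf{1}$ is the identity of $\tilde{A}_g(V)$ on $U$. Axiom~(2) is immediate from the mode expansion $Y_{\tilde{M}(U)}(v,x)=\sum_{m\in r/T+\mathbb{Z}}v(m)x^{-m}$ for $v\in V^r$.

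To obtain $\bar{M}(U)(0)=U$, I would first show $U(\hat{V}[g])W\subseteq J(U)$. Writing $E(z_0,z_2)$ for the bracketed difference inside~\eqref{eq6.6} without the prefactor, Lemma~\ref{lem6.1} with $u\in U$ and arbitrary $X\in U(\hat{V}[g])$ gives that every coefficient of $X\cdot(z_0+1)^{k+r/T}E$ pairs to $0$ with $U^*$, for some sufficiently large $k$. Multiplying by the formal power series $(z_0+1)^{-k-1+\delta_0(r)}$ and then extracting coefficients shows that the generators of $W$, which are the coefficients of $(z_0+1)^{-1+\delta_0(r)+r/T}E$, also lie in $J(U)$; since $J(U)$ is a $U(\hat{V}[g])$-submodule, $U(\hat{V}[g])W\subseteq J(U)$. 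The subspace $W$ is $(1/T)\mathbb{Z}$-graded by direct degree tracking, hence so is $U(\hat{V}[g])W$; and $J(U)\cap U=0$ because $U^*$ separates points. Therefore $\bar{M}(U)(0)=\tilde{M}(U)(0)/(U(\hat{V}[g])W\cap U)=U$.

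The analytic core is weak $\phi$-associativity on every vector of $\bar{M}(U)$, not only images of $U$. For images of $u\in U$ the relation is built into $W$ with prefactor $(z_0+1)^{-1+\delta_0(r)+r/T}$. For a general vector, lift to $\tilde{w}=Xu_0\in\tilde{M}(U)$ with $X\in U(\hat{V}[g]_+)$ and $u_0\in U$, and induct on the PBW length of $X$: writing $X=d(M)X'$, move $d(M)$ leftward past $Y(a,(z_0+1)z_2)Y(b,z_2)$ and past $Y(Y(a,\log(1+z_0))b,z_2)$ using the commutator formula $[d(M),Y(e,z)]=z^M\sum_{i\geq 0}(M^i/i!)Y(d_ie,z)$ deduced from Lemma~\ref{lem5.1}(1). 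A direct calculation, using the vertex-algebra identity $c_i(Y(a,\log(1+z_0))b)=Y(a,\log(1+z_0))c_ib+\sum_k\binom{i}{k}\log(1+z_0)^{i-k}Y(c_ka,\log(1+z_0))b$ together with the combinatorial identity $\sum_{k\geq i}\frac{M^k}{k!}\binom{k}{i}\log(1+z_0)^{k-i}=\frac{M^i}{i!}(1+z_0)^M$ appearing in Claim~4 of the proof of Lemma~\ref{lem6.1}, shows that the associativity defect at $\tilde{w}$ equals $d(M)$ times the defect at $X'u_0$ with the same inputs $a,b$, minus a finite $\mathbb{C}$-linear combination, scaled by explicit factors of $z_2^M$ and $(z_0+1)^M$, of defects at $X'u_0$ with modified inputs $d_ia,b$ or $a,d_jb$. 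By induction each such defect lies in $U(\hat{V}[g])W$ after multiplication by a suitable $(z_0+1)^{l'+r'/T}$, and a common large $l$ handles all of them. The main obstacle here is the bookkeeping of these commutator cancellations; once it is set up the induction is routine.

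For the universal property, promote $M$ to a $\hat{V}[g]$-module by declaring $v(q)w:=v_{q-1}w$; the commutator relations of Lemma~\ref{lem5.1}(1) match the formula~\eqref{comm_formula} on $M$, and $\mathbf{1}(q)=\mathbf{1}_{q-1}=0$ for $q\ne 0$ by the vacuum axiom. By construction $\tilde{\Omega}_0(M)$ is annihilated by $\hat{V}[g]_-$, and its $\hat{V}[g]_0$-action coincides with the $\tilde{A}_g(V)$-structure of Theorem~\ref{thm4.4}; hence $\varphi$ is a $P_0$-morphism, and the universality of induction extends $\varphi$ uniquely to a $\hat{V}[g]$-morphism $\tilde{\varphi}:\tilde{M}(U)\to M$. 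For $w=\varphi(u)\in\tilde{\Omega}_0(M)$ the defining truncation $o_m(a)w=0$ for $m<0$ gives $x^{-1+\delta_0(r)+r/T}Y_M(a,x)w\in M[[x]]$, and substituting this into the argument of Lemma~\ref{phi_weak_asso} produces weak $\phi$-associativity on $M$ with exactly the prefactor $(z_0+1)^{-1+\delta_0(r)+r/T}$ used to define $W$. Thus $\tilde{\varphi}(W)=0$, so $\tilde{\varphi}$ descends to the required morphism $\bar{\varphi}:\bar{M}(U)\to M$; uniqueness follows because $\bar{M}(U)$ is generated by $U$ as a $\hat{V}[g]$-module.
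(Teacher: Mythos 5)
Your proposal is correct and follows essentially the same route as the paper: weak $\phi$-associativity on all of $\bar{M}(U)$ via the commutator induction of Claim~4 of Lemma~\ref{lem6.1}, the identification $\bar{M}(U)(0)=U$ from $U(\hat{V}[g])W\subseteq J(U)$ and $J(U)\cap U=0$, and the universal property from the universality of the induced module. The only difference is one of detail, not of method: where the paper dismisses the universal property as immediate from the construction, you correctly supply the missing verification (promoting $M$ to a $\hat{V}[g]$-module with $v(q)w=v_{q-1}w$ and checking $\tilde{\varphi}(W)=0$ via the truncation $o_m(a)w=0$ for $m<0$ on $\tilde{\Omega}_0(M)$, which yields exactly the exponent $-1+\delta_0(r)+r/T$ defining $W$).
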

\begin{proof}

By definition, \eqref{eq6.6} holds for $a, b \in V$ and for $u \in U + U(\hat{V}[g]) W \subset \bar{M}(U)$. It follows from the proof of Claim~4 in the proof of Lemma~\ref{lem6.1} that \eqref{eq6.6} in fact holds for all $a, b \in V$ and for every $u \in \bar{M}(U)$. Consequently, $\bar{M}(U)$ is an $(1/T)\mathbb{N}$-graded $g$-twisted $\phi$-coordinated $V$-module.
By Proposition~\ref{prop2.8}, $\bar{M}(U)$ is spanned by
$$
\left\{ a(n)\bigl(U + U(\hat{V}[g]) W\bigr) \mid a \in V,\ n \in (1/T)\mathbb{Z} \right\}.
$$
Hence, for all $m \in (1/T)\mathbb{N}$, we have
$$
\bar{M}(U)(m) = \hat{V}[g]_m \bigl(U + U(\hat{V}[g]) W\bigr).
$$
In particular, 
$
\bar{M}(U)(0) = \tilde{A}_{g}(V)\bigl(U + U(\hat{V}[g]) W\bigr),
$
which is a quotient module of $U$. From Lemma~\ref{lem6.1}, we know that $U(\hat{V}[g])W \subset J(U)$ and that $J(U) \cap U = 0$. Therefore, $\bar{M}(U)(0)$ contains $U$ as a submodule, which implies $\bar{M}(U)(0) = U$.
Finally, the universal property of $\bar{M}(U)$ follows directly from its construction.
\end{proof}

Furthermore, we set
$$
\tilde{L} (U)=\tilde{M} (U) / J (U)
$$
which is a $(1/T)\mathbb{N}$-graded ${U}(\hat{V}[g])$-module.
We can now state the second 
main result of this section.

\begin{thm}\label{thm6.2}
$\tilde{L} (U)$ is a $(1/T)\mathbb{N}$-graded $g$-twisted $\phi$-coordinated $V$-module with $U$ as the degree $0$ subspace. Furthermore, if $U$ is an irreducible $\tilde{A}_{g}(V)$-module, then $\tilde{L} (U)$ is an irreducible $(1/T)\mathbb{N}$-graded $g$-twisted $\phi$-coordinated $V$-module.
\end{thm}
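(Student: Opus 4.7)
The plan is to deduce Theorem~\ref{thm6.2} from Theorem~\ref{thm6.3} together with the definition of $J(U)$ as a ``coinvariant-maximal'' submodule, and then run a short maximality argument.

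First I would verify that $J(U)$ is a graded $U(\hat V[g])$-submodule of $\tilde M(U)$ with $J(U)\cap U=0$. Gradedness follows because the pairing $\langle u',\cdot\rangle$ kills every $\tilde M(U)(i)$ with $i\neq 0$: writing $w=\sum_i w_i$ and decomposing $X=\sum_n X_n$ into homogeneous pieces, the identity $\langle u',Xw\rangle=\sum_i\langle u',X_{-i}w_i\rangle$ decouples the constraint, forcing each $w_i$ individually into $J(U)$. Taking $X=1$ gives $J(U)\cap U=0$. Next, to show that $\tilde L(U)=\tilde M(U)/J(U)$ carries a $(1/T)\mathbb N$-graded $g$-twisted $\phi$-coordinated $V$-module structure, it suffices to verify $W\subseteq J(U)$: then $\tilde L(U)$ is a graded quotient of $\bar M(U)$ from Theorem~\ref{thm6.3}, and from $J(U)\cap U=0$ we recover $\tilde L(U)(0)=U$.

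To establish $W\subseteq J(U)$, let $E(z_0,z_2,u)$ denote the generating series in \eqref{eq6.6}, whose coefficients (for $u\in U$) span $W$. Lemma~\ref{lem6.1} supplies, for each $u'\in U^*$ and $X\in U(\hat V[g])$, a positive integer $k$ such that $\langle u',X(z_0+1)^{k+1-\delta_0(r)}E(z_0,z_2,u)\rangle=0$ as a formal series. Since $(z_0+1)^{k+1-\delta_0(r)}\in\mathbb C[[z_0]]$ has constant term $1$ and is therefore invertible, the identity $\langle u',XE(z_0,z_2,u)\rangle=0$ follows coefficient-wise, so every coefficient of $E$ lies in $J(U)$, whence $W\subseteq J(U)$ and thus $U(\hat V[g])W\subseteq J(U)$.

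For the irreducibility claim, the preceding analysis shows that $J(U)$ is in fact the largest graded $U(\hat V[g])$-submodule of $\tilde M(U)$ with trivial intersection with $U$: if $w\in\tilde M(U)(j)$ and $w\notin J(U)$, then some homogeneous $x\in U(\hat V[g])_{-j}$, extracted from the $(-j)$-component of an $X$ witnessing $w\notin J(U)$, produces $0\neq xw\in U$. Consequently, every nonzero graded submodule $N$ of $\tilde L(U)$ must meet $U=\tilde L(U)(0)$ nontrivially. Then $N\cap U$ is an $\tilde A_g(V)$-submodule of $U$, since $a(0)$ acts on $U$ as $o(a)$ by Lemma~\ref{lem5.3} and $N$ is closed under $\hat V[g]$. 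Irreducibility of $U$ forces $N\cap U=U$; and since $\tilde L(U)$ is generated by $U$ as a $V$-module (being a quotient of the induced module $\tilde M(U)=U(\hat V[g]_+)U$, together with Proposition~\ref{prop2.8}), we conclude $N=\tilde L(U)$.

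The main obstacle is the step $W\subseteq J(U)$: Lemma~\ref{lem6.1} gives the desired identity only after multiplication by $(z_0+1)^{k+r/T}$ with $k$ a large positive integer, whereas $W$ is constructed from the exponent $-1+\delta_0(r)+r/T$. The resolution is that the discrepancy $(z_0+1)^{k+1-\delta_0(r)}$ is an invertible formal power series in $z_0$, so it can be stripped off at the level of scalar-valued coefficient identities without altering the conclusion.
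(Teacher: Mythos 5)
Your overall architecture coincides with the paper's: everything is reduced to the inclusion $U(\hat{V}[g])W\subseteq J(U)$, so that $\tilde L(U)$ becomes a graded quotient of $\bar M(U)$ from Theorem~\ref{thm6.3}, combined with $J(U)\cap U=0$ and a maximality argument for irreducibility. Your verification that $J(U)$ is graded, that $J(U)\cap U=0$, and your irreducibility argument (which fills in what the paper calls ``straightforward'') are fine, up to the minor point that you only test graded submodules.

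The step you yourself flag as the main obstacle, however, contains a genuine gap. You deduce $\langle u',XE(z_0,z_2,u)\rangle=0$ from $\langle u',X\,(z_0+1)^{k+1-\delta_{0}(r)}E(z_0,z_2,u)\rangle=0$ by ``stripping off'' the polynomial, on the grounds that it is invertible in $\C[[z_0]]$. But the series $F(z_0)=\langle u',XE(z_0,z_2,u)\rangle$ does not live in $\C((z_0))$, where such a cancellation would be legitimate. For a fixed power of $z_2$ and homogeneous $X$ of degree $d$, the first summand of $E$ contributes $\sum_{m\ge d}\langle u',X a(m)b(d-m)u\rangle\,(z_0+1)^{-m-1+\delta_{0}(r)+r/T}$, where each factor $(z_0+1)^{-m-1+\delta_{0}(r)+r/T}$ is expanded in \emph{descending} powers of $z_0$ and infinitely many $m$ contribute; this part of $F$ has $z_0$-powers bounded above and unbounded below. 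The second summand is lower-truncated in $z_0$. Hence $F\in\C[[z_0,z_0^{-1}]]$, and multiplication by $(1+z_0)^{N}$ is \emph{not} injective there: its kernel is spanned by $\partial_{z_0}^{j}\delta(-z_0)$ for $0\le j<N$, with $\delta(-z_0)=\sum_{n\in\Z}(-1)^{n}z_0^{n}$, and every such kernel element decomposes exactly as (upper-truncated) minus (lower-truncated) — e.g.\ $\delta(-z_0)$ is the difference of the two expansions of $(1+z_0)^{-1}$. Since the appearance of delta functions as differences of two expansions of one rational function is the basic mechanism of this whole subject, $(1+z_0)^{N}F=0$ does not force $F=0$, and your argument for $W\subseteq J(U)$ does not close. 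The paper's route avoids division entirely: Claims~1--3 in the proof of Lemma~\ref{lem6.1} establish the identity at the \emph{exact} exponent $-1+\delta_{0}(r)+r/T+j$, $j\in\N$, residue by residue (nonnegative $z_0$-residues via the fact that the modes $a(-1+\delta_{0}(r)+r/T+j)$ annihilate $U$, the $z_0^{-1}$-residue by a direct two-sided computation, and negative powers by a downward induction using the $\D$-operator), and the extension from $u\in U$ to general vectors and general $X$ is then routed through $\bar M(U)$ in Theorem~\ref{thm6.3}. To repair your proof you need to reproduce that residue-by-residue analysis (or quote Theorem~\ref{thm6.3}) rather than divide by $(1+z_0)^{k+1-\delta_{0}(r)}$.
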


\begin{proof}

By Lemma~\ref{lem6.1}, we know that $\tilde{L} (U)$ is a $(1/T)\mathbb{N}$-graded $g$-twisted $\phi$-coordinated $V$-module and that $J (U) \cap U = 0$.
From the construction, $\tilde{L} (U)$, regarded as a $U(\hat{V}[g])$-module, is a homomorphic image of $\bar{M} (U)$. Consequently, we have $\tilde{L} (U)(0) = U$.
Moreover, it is straightforward to verify that if $U$ is an irreducible $\tilde{A}_{g }(V)$-module, then $\tilde{L} (U)$ is an irreducible $(1/T)\mathbb{N}$-graded $g$-twisted $\phi$-coordinated $V$-module.
\end{proof}

From Proposition \ref{prop4.5} and Theorem \ref{thm6.2}, we have:

\begin{thm}\label{thm6.4}
    There exists a bijection between the simple $\tilde{A}_{g }(V)$-modules   and the irreducible $(1/T)\mathbb{N}$-graded $g$-twisted $\phi$-coordinated $V$-modules.
\end{thm}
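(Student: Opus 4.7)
The plan is to exhibit mutually inverse assignments
\[
U \longmapsto \tilde{L}(U), \qquad M \longmapsto \tilde{\Omega}_0(M) = M(0),
\]
between isomorphism classes. Both are well-defined by preceding results: Theorem~\ref{thm6.2} shows $\tilde{L}(U)$ is an irreducible $(1/T)\N$-graded $g$-twisted $\phi$-coordinated $V$-module with $\tilde{L}(U)(0) = U$, while Proposition~\ref{prop4.5}(2), applied with $n = m = 0$ and using the standing convention that $M(0) \neq 0$ for any nonzero graded module, guarantees that $M(0)$ is a simple $\tilde{A}_g(V)$-module.

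The identity $\tilde{\Omega}_0(\tilde{L}(U)) = U$ follows immediately: since $\tilde{L}(U)$ is irreducible, Proposition~\ref{prop4.5}(1) gives $\tilde{\Omega}_0(\tilde{L}(U)) = \tilde{L}(U)(0)$, and the latter equals $U$ by Theorem~\ref{thm6.2}.

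The substantive direction is to produce an isomorphism $\tilde{L}(M(0)) \cong M$ for every irreducible graded $M$. Since $\tilde{\Omega}_0(M) = M(0)$ by Proposition~\ref{prop4.5}(1), the universal property in Theorem~\ref{thm6.3} extends $\mathrm{id}_{M(0)}$ uniquely to a morphism $\bar{\varphi}: \bar{M}(M(0)) \to M$, and $\bar{\varphi}$ is surjective because $\bar{\varphi}|_{M(0)}$ is nonzero while $M$ is irreducible. The main obstacle, and the crux of the argument, is to show that $\bar{\varphi}$ descends along the canonical surjection $\bar{M}(M(0)) \twoheadrightarrow \tilde{L}(M(0))$; equivalently, that the lift $\tilde{\psi}: \tilde{M}(M(0)) \to M$ through the natural surjection $\tilde{M}(M(0)) \twoheadrightarrow \bar{M}(M(0))$ satisfies $J(M(0)) \subseteq \ker(\tilde{\psi})$.

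For this, a standard PBW decomposition of the induced module $\tilde{M}(M(0)) = U(\hat{V}[g]) \otimes_{U(P_0)} M(0) \cong U(\hat{V}[g]_{+}) \otimes M(0)$ yields $\tilde{M}(M(0))(0) = M(0)$, so the graded map $\tilde{\psi}$ restricts to the identity on $M(0)$. Take $v \in J(M(0))$ and suppose for contradiction that $\tilde{\psi}(v) \neq 0$; since $M = U(\hat{V}[g])\tilde{\psi}(v)$ by irreducibility, one can choose a homogeneous $x \in U(\hat{V}[g])$ with $x\,\tilde{\psi}(v) \in M(0) \setminus \{0\}$. Then the degree-zero component of $xv$, which lies in $\tilde{M}(M(0))(0) = M(0)$ and is captured precisely by the pairings $\langle u', xv \rangle$ for $u' \in M(0)^{*}$, must be nonzero, contradicting $v \in J(M(0))$. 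Hence $\bar{\varphi}$ factors through $\tilde{L}(M(0))$, and the resulting surjection $\tilde{L}(M(0)) \twoheadrightarrow M$ between irreducible graded modules, being the identity on the degree-zero piece $M(0)$, is an isomorphism.
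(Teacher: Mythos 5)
Your proposal is correct and follows exactly the route the paper intends: the paper "proves" Theorem~\ref{thm6.4} by a one-line citation of Proposition~\ref{prop4.5} and Theorem~\ref{thm6.2}, and your argument is the standard Zhu-type verification that the assignments $U \mapsto \tilde{L}(U)$ and $M \mapsto \tilde{\Omega}_0(M)=M(0)$ are mutually inverse. The only part the paper leaves entirely implicit --- that $\tilde{L}(M(0)) \cong M$ for irreducible $M$, via the universal property of $\bar{M}(M(0))$ from Theorem~\ref{thm6.3} and the observation that $J(M(0))$ dies under the induced map because $\tilde{M}(M(0))(0)=M(0)$ --- is supplied correctly in your writeup (taking $v\in J(M(0))$ homogeneous, which is harmless since $J$ is graded).
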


\section{Universal enveloping algebra and isomorphisms}

In this section, we construct the universal enveloping algebra $U(V[g])$ and show that every $g$-twisted $\phi$-coordinated $V$-module naturally carries the structure of a $U(V[g])$-module. Moreover, we establish a precise relationship between   $\tilde{A}_{g }(V)$ and   $U(V[g])$.

Let $U(\hat{V}[g])$ denote the universal enveloping algebra of the Lie algebra $\hat{V}[g]$. The $(1/T)\Z$-grading on $\hat{V}[g]$ induces a $(1/T)\Z$-grading on $U(\hat{V}[g])$, namely
\[
U(\hat{V}[g]) = \bigoplus_{m \in (1/T)\Z} U(\hat{V}[g])_m.
\]
For $0>k \in -(1/T)\N$, define
\[
U(\hat{V}[g])_m^{\,k} := \sum_{k\geq i \in (1/T)\Z} U(\hat{V}[g])_{m - i}\, U(\hat{V}[g])_i,
\]
and set $U(\hat{V}[g])_m^{\,0} := U(\hat{V}[g])_m$. Then
\[
U(\hat{V}[g])_m^{\,k} \subseteq U(\hat{V}[g])_m^{\,k + 1/T},
\]
and
\[
\bigcap_{k \in -(1/T)\N} U(\hat{V}[g])_m^{\,k} = 0,
\qquad
\bigcup_{k \in -(1/T)\N} U(\hat{V}[g])_m^{\,k} = U(\hat{V}[g])_m.
\]
Thus, the family $\{ U(\hat{V}[g])_m^{\,k} \mid k \in -(1/T)\N \}$ forms a fundamental system of neighborhoods of $0$ in $U(\hat{V}[g])_m$. Let $\widetilde{U}(\hat{V}[g])_m$ be the completion of $U(\hat{V}[g])_m$ with respect to this topology, and define
\[
\widetilde{U}(\hat{V}[g]) := \bigoplus_{m \in (1/T)\Z} \widetilde{U}(\hat{V}[g])_m.
\]
Then $\widetilde{U}(\hat{V}[g])$ is a complete topological ring that admits infinite sums of homogeneous elements.

 We require the component form of the $g$-twisted $\phi$-Jacobi identity.  
For $u \in V^r$, $v \in V^s$, $l \in \mathbb{Z}$, $m \in {r}/{T} + \mathbb{Z}$, and $n \in {s}/{T} + \mathbb{Z}$, applying the operator
$
\operatorname{Res}_z \operatorname{Res}_{x_1} \operatorname{Res}_{x_2}\, (z x_2)^l x_1^{\,m} x_2^{\,n+1}
$
to the $g$-twisted $\phi$-Jacobi identity~\eqref{phi_jacobi} yields
\begin{align*}
&\sum_{i \geq 0} (-1)^i \binom{l}{i} \Bigl( u_{\,l + m - i}\, v_{\,n + i} - (-1)^l\, v_{\,l + n - i}\, u_{\,m + i} \Bigr) \\
&\qquad = \sum_{i,j \geq 0} (-1)^i \binom{l}{i} \frac{(l + m - i + 1)^j}{j!} \,(u_j v)_{\,l + m + n + 1}.
\end{align*}
Using this identity, we can formulate the definition of the universal enveloping algebra:

\begin{defi}
The \emph{universal enveloping algebra} $U(V[g])$ of $V$ with respect to $g$ is defined as the quotient of $\widetilde{U}(\hat{V}[g])$ by the two-sided ideal generated by the following relations:

{\rm(1)} $\mathbf{1}(0) = 1$ and $\mathbf{1}(i) = 0$ for all $i \in \Z \setminus \{0\}$;
    
{\rm(2)} For all $u \in V^r$, $v \in V^{r'}$, $l \in \Z$, $s \in r/T + \Z$, and $t \in r'/T + \Z$,
    \begin{align*}
    &\sum_{i \geq 0} (-1)^i \binom{l}{i} \Bigl( u(s - i) v(t + i) - (-1)^l v(l + t - i) u(s + i - l) \Bigr) \\
    &\qquad = \sum_{i,j \geq 0} (-1)^i \binom{l}{i} \frac{(s - i)^j}{j!} (u_j v)(s + t).
    \end{align*}
 
\end{defi}

It follows immediately that $U(V[g]) = \bigoplus_{m \in (1/T)\Z} U(V[g])_m$ is a $(1/T)\Z$-graded associative algebra. For $k \in -(1/T)\N$, set
\[
U(V[g])_m^{\,k} := \sum_{{k\geq i \in (1/T)\Z}} U(V[g])_{m - i}\, U(V[g])_i.
\]
Then $U(V[g])_0 /U(V[g])_0^{-1/T}$ is an associative algebra.

\begin{rmk} \label{rmk8.2}
{\rm(1)} By construction, any $g$-twisted $\phi$-coordinated $V$-module $(W, Y_W)$ becomes a $U(V[g])$-module via the action
    \[
    u(m) \cdot w := u_m w, \quad \text{for } u \in V^r,\ m \in r/T + \Z,\ w \in W.
    \]

 {\rm(2)} We shall continue to denote by $u(s)$ the image of the element $u(s) \in \widetilde{U}(\hat{V}[g])$ in $U(V[g])$ or its quotients.

\end{rmk}

\begin{lem}\label{lem8.3}
Let $n, m \in (1/T)\N$. For any element
\[
w = \sum u_1(m_1) \cdots u_k(m_k) \in U(V[g])_{n - m} \big/ U(V[g])_{n - m}^{-m - 1/T},
\]
where $u_j \in V$ and $m_j \in (1/T)\Z$, there exists $u_{k+1} \in V$ such that
$
w = u_{k+1}(m - n).
$
\end{lem}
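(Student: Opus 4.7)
The plan is to proceed by induction on $k$. The base case $k = 1$ is immediate, with $u_{k+1} := u_1$.

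For the inductive step $k \geq 2$, the strategy is to reduce $u_{k-1}(m_{k-1}) u_k(m_k)$ to a finite sum of single modes modulo the filtration piece, thereby producing products with only $k-1$ factors to which the inductive hypothesis applies. The key identity to establish is the universal-algebra analogue of the weak $g$-twisted $\phi$-associativity of Lemma~\ref{phi_weak_asso}:
\[
u(s) v(t) \equiv \sum_{j \geq 0} c_j\, (u_j v)(s + t) \pmod{U(V[g])^{-m-1/T}_{-(s+t)}},
\]
for $u \in V^r$, $v \in V$, and $s \in r/T + \mathbb{Z}$, $t \in (1/T)\mathbb{Z}$, with coefficients $c_j = \operatorname{Res}_z (z+1)^{s-1}(\log(1+z))^{-j-1}$ (only finitely many are nonzero, since $u_j v = 0$ for $j \gg 0$). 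To derive it, I would apply the defining $g$-twisted $\phi$-Jacobi relation of $U(V[g])$, multiply through by $(z+1)^{l + r/T}$ for an integer $l$ large enough to truncate the series into polynomial form in $z$ (with $l$ determined by $u$, $v$, and $m$), and extract residues in $z$, $x_1$, and $x_2$, mimicking the calculation in the proof of Proposition~\ref{prop2.8}. The ``commuted'' piece of the Jacobi relation produces terms whose rightmost mode factor $v(t+i)$ satisfies $t + i \geq m + 1/T$, so those contributions are absorbed into $U(V[g])^{-m-1/T}_{-(s+t)}$.

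Granted the key identity, I apply it to the rightmost pair in each summand of $w$. Since $U(V[g])^{-m-1/T}$ is stable under left multiplication in the graded sense (by direct expansion, $U(V[g])_p \cdot U(V[g])^{-m-1/T}_q \subseteq U(V[g])^{-m-1/T}_{p+q}$), left-multiplication by $u_1(m_1) \cdots u_{k-2}(m_{k-2})$ preserves the congruence, giving
\[
w \equiv \sum_{j \geq 0} c_j\, u_1(m_1) \cdots u_{k-2}(m_{k-2}) \bigl((u_{k-1})_j u_k\bigr)(m_{k-1} + m_k) \pmod{U(V[g])^{-m-1/T}_{n-m}}.
\]
Each summand on the right is a product of $k - 1$ mode factors of total mode sum $m - n$, so the inductive hypothesis provides $u_{k+1}^{(j)} \in V$ with the $j$-th summand congruent to $u_{k+1}^{(j)}(m-n)$. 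Setting $u_{k+1} := \sum_j c_j\, u_{k+1}^{(j)} \in V$ then gives $w \equiv u_{k+1}(m - n)$ in the quotient, completing the induction.

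The main obstacle is the rigorous derivation of the key identity at the level of $U(V[g])$. On modules, weak $\phi$-associativity only holds after multiplication by $(z+1)^{l + r/T}$ with $l$ depending on the vector, so one must certify that at the universal level the difference $u(s)v(t) - \sum_j c_j (u_j v)(s+t)$ decomposes as a sum of products whose rightmost factor has mode $\geq m + 1/T$. This requires careful bookkeeping of the formal residues and of which terms of the Jacobi expansion fall into the rightmost-mode filtration $U(V[g])^{-m-1/T}$, together with the observation that a single value of $l$ (depending on $u, v, m$ but not on any specific vector) suffices at the level-$m$ filtration.
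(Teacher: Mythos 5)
Your outer structure (induct on $k$, collapse the rightmost pair of modes into a single mode modulo the filtration, then use that the filtration is a graded left ideal) is sound, and the left-ideal observation $U(V[g])_p\,U(V[g])^{-m-1/T}_q\subseteq U(V[g])^{-m-1/T}_{p+q}$ is correct. But the proof has a genuine gap exactly where you flag it: the ``key identity'' is not established, and a single application of the defining relation of $U(V[g])$ cannot establish it. Applying the relation with parameter $l$ chosen so that $s-l>m$ (the paper takes $l=s-\lfloor m\rfloor-1$) does kill the commuted terms $v(l+t-i)u(s+i-l)$ modulo $U(V[g])^{-m-1/T}$, but since $l$ is in general not a nonnegative integer, the binomial coefficients $\binom{l}{i}$ do not truncate, and solving for $u(s)v(t)$ leaves the infinite tail $\sum_{i\geq 1}(-1)^i\binom{l}{i}u(s-i)v(t+i)$. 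These are still two-mode products; only the terms with $t+i>m$ fall into the filtration, so finitely many survive and must themselves be reduced. This forces a secondary induction on the rightmost mode (which strictly increases at each step and is bounded above by $m$ before the term dies in the filtration) --- precisely the induction on the lexicographic order of $(k,-m_k)$ that the paper runs via a minimal-counterexample argument. Your sketch (``multiply by $(z+1)^{l+r/T}$ to truncate the series, mimicking Proposition~\ref{prop2.8}'') imports a module-level weak-associativity argument where there is a vector to act on and truncation is automatic; at the universal level there is no such vector, and the termination mechanism is the mode-raising induction, which is absent from your write-up.

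A secondary issue: your proposed coefficients $c_j=\operatorname{Res}_z(z+1)^{s-1}(\log(1+z))^{-j-1}$, depending only on $s$, cannot be right. The reduction modulo $U(V[g])^{-m-1/T}$ necessarily depends on $m$ (it determines the choice of $l$ and which terms of the tail survive); compare Lemma~\ref{lem8.4}, where the element replacing $u(p-n)v(m-p)$ is $u\bullet_{g,m,p}^{\,n}v$, whose defining residue involves $\lfloor m\rfloor$, $\lfloor n\rfloor$, $\lfloor p\rfloor$ and the $\delta$-corrections. This does not affect the truth of the lemma (only existence of $u_{k+1}$ is claimed), but it is a sign that the one-step collapse you envision is not what actually happens. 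To repair the proof, either carry out the secondary induction on the rightmost mode to prove your key identity for pairs, or fold everything into a single induction on $(k,-m_k)$ as the paper does.
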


\begin{proof}
We prove by induction on the lexicographical order of pairs $(k, -m_k)$ that for any monomial $u_1(m_1) \cdots u_k(m_k)$, there exists $u_{k+1} \in V$ such that
\[
u_1(m_1) \cdots u_k(m_k) \equiv u_{k+1}(m - n) \mod U(V[g])_{n - m}^{-m - 1/T}.
\]
The claim is clear when $k = 1$ or when $m_k > m$. Suppose, for contradiction, that the statement fails for some monomial, and let $(l, -m_l)$ be a minimal counterexample with respect to lexicographical order. Write
$
s = m_{l-1}$,  $t = m_l$,  $u = u_{l-1}$,  $v = u_l,
$
and denote by $J(l-2)$ the product $u_1(m_1) \cdots u_{l-2}(m_{l-2})$. Using the defining relation of $U(V[g])$, we compute
\begin{align*}
&\,J(l-2) u(s) v(t)\\
=& -\sum_{k \geq 1} (-1)^k \binom{s - \xqz{m} - 1}{k} J(l-2) u(s - k) v(t + k) \\
& + \sum_{k \geq 0} (-1)^{k + s - \xqz{m} - 1} \binom{s - \xqz{m} - 1}{k} J(l-2) v(s + t - \xqz{m} - k - 1) u(\xqz{m} + k + 1) \\
& + \sum_{k,j \geq 0} (-1)^k \binom{s - \xqz{m} - 1}{k} \frac{(s - k)^j}{j!} J(l-2) (u_j v)(s + t).
\end{align*}
Modulo $U(V[g])_{n - m}^{-m - 1/T}$, the second term vanishes because $\xqz{m} + k + 1 > m$ for all $k \geq 0$. Thus,
\begin{align*}
&\,J(l-2) u(s) v(t)\\
\equiv& -\sum_{k \geq 1} (-1)^k \binom{s - \xqz{m} - 1}{k} J(l-2) u(s - k) v(t + k) \\
& + \sum_{k,j \geq 0} (-1)^k \binom{s - \xqz{m} - 1}{k} \frac{(s - k)^j}{j!} J(l-2) (u_j v)(s + t)
\mod U(V[g])_{n - m}^{-m - 1/T}.
\end{align*}
Each term on the right-hand side corresponds to a pair strictly smaller than $(l, -m_l)$ in lexicographical order: the first sum involves $(l, -m_l - k)$ with $k \geq 1$, and the second involves $(l - 1, -m_l - m_{l-1})$. This contradicts the minimality of $(l, -m_l)$, completing the proof.
\end{proof}

Analogously to Lemma~\ref{lem4.1}, we obtain the following result.

\begin{lem}\label{lem8.4}
Let $u, v \in V$ and $m, n, p \in (1/T)\N$. Then
\[
\bigl(u \bullet_{g,m,\,p}^{\,n} v\bigr)(m - n)
\equiv u(p - n) v(m - p)
\mod U(V[g])_{n - m}^{-m - 1/T}.
\]
In particular, $(u\bullet_gv)(0)\equiv u(0)v(0)\bmod U(V[g])_0^{-1/T}.$
\end{lem}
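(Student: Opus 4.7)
The plan is to follow the proof of Lemma~\ref{lem4.1} closely, carrying out exactly the same residue manipulations at the level of the universal enveloping algebra $U(V[g])$ instead of on a vector $w \in \tilde{\Omega}_m(W)$. The defining relation of $U(V[g])$ is, by construction, the component form of the $g$-twisted $\phi$-Jacobi identity, so every formal manipulation in the proof of Lemma~\ref{lem4.1} that relies only on this identity translates verbatim into an identity in $U(V[g])$. The role of annihilation on $\tilde{\Omega}_m(W)$ is played here by membership in the ideal $U(V[g])_{n-m}^{-m-1/T}$: any product whose rightmost factor is a mode $a(q)$ with $-q \leq -m - 1/T$ lies in this filtered piece, in perfect analogy with $o_j(a)\,w = 0$ on $\tilde{\Omega}_m(W)$ for $j < -m$.

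First I reduce to the case $u \in V^r$, $v \in V^s$ with $\bar{p} - \bar{n} \equiv r \pmod{T}$ and $\bar{m} - \bar{p} \equiv s \pmod{T}$; otherwise both sides of the congruence vanish by the grading and $g$-invariance conventions governing $U(V[g])$. Under these hypotheses, I expand $(u \bullet_{g,m,p}^{\,n} v)(m-n)$ as a double residue in $(x_2, y_0)$ involving $Y\!\bigl(Y(u, \log(1+y_0))\,v,\, x_2\bigr)$, apply the $g$-twisted $\phi$-Jacobi identity (valid in $U(V[g])$ via the defining relation) to rewrite this iterate as the difference $Y(u, x_1) Y(v, x_2) - Y(v, x_2) Y(u, x_1)$ with appropriate delta-function coefficients, and perform the same $y_0$-elimination as in Lemma~\ref{lem4.1}. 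This produces two summands whose respective denominators are $(x_1 - x_2)^{h(r)+1+i}$ and $(-x_2 + x_1)^{h(r)+1+i}$, where $h(r) = \lfloor m\rfloor + \lfloor n\rfloor - \lfloor p\rfloor - 1 + \delta_{\bar{m}}(r) + \delta_{\bar{n}}(T-r)$.

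For the $(-x_2 + x_1)^{-h(r)-1-i}$ piece, expansion in nonnegative powers of $x_1/x_2$ yields monomials $v(t)\,u(s)$ whose rightmost factor $u(s)$ has $s$ exceeding $m$, so by definition each such monomial lies in $U(V[g])_{n-m}^{-m-1/T}$. For the $(x_1 - x_2)^{-h(r)-1-i}$ piece, expansion and re-collection by $k = i + j$ gives an inner binomial sum $\sum_{i=0}^{k}(-1)^i\binom{h(r)+i}{i}\binom{h(r)+k}{k-i}$, which vanishes for $k \geq 1$ by the elementary combinatorial identity already invoked at the end of the proof of Lemma~\ref{lem4.1}. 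Only the $k = 0$ term survives, contributing exactly $u(p-n)\,v(m-p)$. The Wick-type correction terms $(u_j v)(s+t)$ arising from the right-hand side of the defining relation are disposed of by induction on a suitable weight, each falling into $U(V[g])_{n-m}^{-m-1/T}$ by the same filtration argument used in Lemma~\ref{lem8.3}. Specializing to $m = n = p = 0$ yields the ``in particular'' assertion.

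The chief technical obstacle is bookkeeping for the Wick-type correction terms: without a test vector to simplify calculations, one must verify directly from the definition of $U(V[g])_{n-m}^{-m-1/T}$ that every such remainder has its rightmost factor of sufficiently negative degree. This requires careful tracking of mode indices through the residue evaluations and through the repeated applications of the defining relation, and is the one place where the translation of Lemma~\ref{lem4.1} from operator-level to algebraic-level requires genuine additional care.
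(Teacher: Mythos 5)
Your proposal is essentially the paper's intended argument: the paper proves Lemma~\ref{lem8.4} only by declaring it ``analogous to Lemma~\ref{lem4.1},'' and you have correctly identified the two points that make the analogy work --- the reversed-order summand $v(t)u(s)$ lands in $U(V[g])_{n-m}^{-m-1/T}$ because its rightmost factor has $s\geq m+1/T$ (replacing $o_j(u)w=0$ for $j<-m$), and the surviving $(x_1-x_2)^{-h(r)-1-i}$ expansion collapses to the single term $u(p-n)v(m-p)$ via the binomial identity $\sum_{i=0}^k(-1)^i\binom{l+i}{i}\binom{l+k}{k-i}=0$ for $k\geq 1$.

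One sentence of your write-up is off and should be deleted rather than executed: there are no ``Wick-type correction terms $(u_jv)(s+t)$ to be disposed of by induction.'' The element $(u\bullet_{g,m,p}^{\,n}v)(m-n)$ \emph{is} a linear combination of such iterate modes, i.e.\ it is exactly the right-hand side of the defining relation of $U(V[g])$ for suitable $l,s,t$ (this is what the delta-function manipulation in Lemma~\ref{lem4.1} establishes); the defining relation is applied once, in the direction iterate $\Rightarrow$ products, and nothing is left over. Moreover, iterate modes of total degree $n-m$ of the form $a(m-n)$ do \emph{not} lie in $U(V[g])_{n-m}^{-m-1/T}$ --- by Lemma~\ref{lem8.3} they are precisely the representatives of the quotient --- so an argument that tried to discard them into the filtration would fail. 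With that sentence removed, the proof is correct and coincides with the paper's.
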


Set $\mathbb{M} =\bigoplus_{n\in (1/T)\N}\mathbb M (n)$ with  $\mathbb{M} (n)=U(V[g])_{n } / U(V[g])_{n }^{ -1/T}$ for $n \in (1/T)\N$. 
We equip $\mathbb{M} $ with the vertex operator maps $Y_{\mathbb{M} }(u, z)=\sum_{p \in r/T+\mathbb{Z}} u_{p} z^{-p-1}$ for  $u\in V^r$, where for $n \in (1/T)\N$, the linear map $u_p$ from $\mathbb{M} (n)$ to $\mathbb{M} (n  -p-1)$ is defined as follows:
$$u_{p}(v  )=\left\{\begin{array}{ll}
u(p+1) v  , & \text { if } n -p-1 \geq 0, \\
0, & \text { if } n-p-1<0,
\end{array}\right.$$
for $v \in U(V[g])_{n } / U(V[g])_{n }^{ -1/T}$.  Then $\mathbb{M}$ is a $(1/T)\N$-graded $g$-twisted $\phi$-coordinated $V$-module, since the $g$-twisted $\phi$-Jacobi identity follows immediately from the construction of $U(V[g])$; and $\mathbb{M}$ is generated by $\mathbb{M}(0)$.

\begin{thm}\label{thm:iso}
Define a linear map
\[
\varphi  : \tilde{A}_{g }(V) \longrightarrow
U(V[g])_{0} \big/ U(V[g])_{0}^{ - 1/T}
\]
by
$
\varphi \bigl(u + \tilde{O}_{g }(V)\bigr)
:= u(0) + U(V[g])_{0}^{ - 1/T}.
$
Then  $\varphi$ is an algebra isomorphism.

\end{thm}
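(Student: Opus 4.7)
The proof decomposes into four steps: well-definedness, the algebra homomorphism property, surjectivity, and injectivity.

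For well-definedness, I would equip $\mathbb{M}(0)$ with the $\tilde{A}_{g}(V)$-module structure arising from Theorem~\ref{thm4.4} applied to the $g$-twisted $\phi$-coordinated $V$-module $\mathbb{M}$, using the containment $\mathbb{M}(0)\subseteq\tilde{\Omega}_0(\mathbb{M})$ from Proposition~\ref{prop4.5}(1). The definition of $Y_{\mathbb{M}}$ makes $u_{-1}^{\mathbb{M}}$ equal to left multiplication by $u(0)$, so $o(u)\cdot[\mathbf{1}(0)]=[u(0)]$ in $\mathbb{M}(0)$; Lemma~\ref{lem4.3} then forces $[u(0)]=0$ whenever $u\in\tilde{O}_{g}(V)$, giving $u(0)\in U(V[g])_{0}^{-1/T}$. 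The algebra homomorphism property is Lemma~\ref{lem8.4} in the case $m=n=p=0$, together with the defining relation $\mathbf{1}(0)=1$. Surjectivity is immediate from Lemma~\ref{lem8.3} with $n=m=0$.

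The heart of the proof is injectivity: given $u\in V$ with $u(0)\in U(V[g])_{0}^{-1/T}$, I must show $u\in\tilde{O}_{g}(V)$. The plan is to construct an $\tilde{A}_{g}(V)$-module map $\psi:\mathbb{M}(0)\to\tilde{A}_{g}(V)$ with $\psi([1])=\mathbf{1}+\tilde{O}_{g}(V)$; since $\mathbb{M}(0)=\tilde{A}_{g}(V)\cdot[1]$ by surjectivity of $\varphi$, one computes $\psi(\varphi([u]))=\psi(u\cdot[1])=u\cdot(\mathbf{1}+\tilde{O}_{g}(V))=[u]$, so $\psi\circ\varphi=\operatorname{id}_{\tilde{A}_{g}(V)}$ and injectivity follows. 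The natural source of $\psi$ is a $g$-twisted $\phi$-coordinated $V$-module morphism $\Psi:\mathbb{M}\to\bar{M}(\tilde{A}_{g}(V))$ sending $[1]$ to $\mathbf{1}+\tilde{O}_{g}(V)$; restricting to degree $0$ and using Remark~\ref{rmk8.2} to identify $[u(0)]$ with $u(-1)\cdot[1]$ in $\mathbb{M}$ then yields $\psi([u(0)])=o(u)\cdot(\mathbf{1}+\tilde{O}_{g}(V))=u+\tilde{O}_{g}(V)$.

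The construction of $\Psi$ is the main obstacle, and it is equivalent to showing that the kernel of the linear map $V\to U(V[g])_{0}/U(V[g])_{0}^{-1/T}$, $u\mapsto u(0)$, coincides with $\tilde{O}_{g}(V)$. To prove this I would use a PBW analysis of $U(V[g])$ through the triangular decomposition $\hat{V}[g]=\hat{V}[g]_{-}\oplus\hat{V}[g]_{0}\oplus\hat{V}[g]_{+}$. Applying PBW to $U(\hat{V}[g])$ and commuting all positive modes to the right, every element of $U(\hat{V}[g])_{0}$ is congruent modulo $U(\hat{V}[g])_{0}^{-1/T}$ to an element of $U(\hat{V}[g]_{0})\cong U(V^{0}/\mathcal{D}V^{0})$, so both $\tilde{A}_{g}(V)$ (via Lemma~\ref{lem5.3}) and $U(V[g])_{0}/U(V[g])_{0}^{-1/T}$ (after imposing the defining Jacobi relations) appear as quotients of $U(V^{0}/\mathcal{D}V^{0})$. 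The remaining technical step---the hardest part of the argument---is to show that the Jacobi-type defining relations of $U(V[g])$, after reduction modulo $U(V[g])_{0}^{-1/T}$, generate exactly the ideal cutting out $\tilde{A}_{g}(V)$ from $U(V^{0}/\mathcal{D}V^{0})$. This parallels the verification in Section~3 that $\tilde{O}_{g}(V)$ is an ideal, and should close the argument.
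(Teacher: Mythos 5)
Your treatment of well-definedness, the homomorphism property, and surjectivity matches the paper's proof exactly (via Lemma~\ref{lem4.3} applied to $\mathbb{M}$, Lemma~\ref{lem8.4}, and Lemma~\ref{lem8.3} respectively), so those three steps are fine.

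The injectivity argument, however, has a genuine gap. You reduce injectivity to constructing a morphism $\Psi:\mathbb{M}\to\bar{M}(\tilde{A}_{g}(V))$, and then you state that constructing $\Psi$ ``is equivalent to showing that the kernel of $u\mapsto u(0)$ coincides with $\tilde{O}_{g}(V)$'' --- but that is precisely the statement you are trying to prove, so the reduction is circular. The fallback you offer (a PBW analysis of $U(V[g])$ showing that the Jacobi-type relations, reduced modulo $U(V[g])_{0}^{-1/T}$, cut out exactly $\tilde{O}_{g}(V)$ inside $U(V^{0}/\mathcal{D}V^{0})$) is not carried out and is genuinely delicate: the ideal is a two-sided ideal in a \emph{completed} algebra, and one must control all of its degree-zero elements, not merely the generators. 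Note also that the universal property of Theorem~\ref{thm6.3} runs the wrong way for your purposes: $\bar{M}(U)$ is universal as a source, not as a target, so it gives no map out of $\mathbb{M}$.

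The missing idea is that no map $\Psi$ is needed. By Remark~\ref{rmk8.2}(1), $\bar{M}(\tilde{A}_{g}(V))$ is itself a $U(V[g])$-module, and every element of $U(V[g])_{0}^{-1/T}$ annihilates $\tilde{\Omega}_0$ of any $g$-twisted $\phi$-coordinated module (its rightmost factors act by $o_m$ with $m<0$). Hence if $u(0)\in U(V[g])_{0}^{-1/T}$, then $o(u)$ kills $\bar{M}(\tilde{A}_{g}(V))(0)=\tilde{A}_{g}(V)$, and evaluating on the vector $\mathbf{1}+\tilde{O}_{g}(V)$ gives
\[
0=o(u)\bigl(\mathbf{1}+\tilde{O}_{g}(V)\bigr)=u\bullet_{g}\mathbf{1}+\tilde{O}_{g}(V)=u+\tilde{O}_{g}(V),
\]
so $u\in\tilde{O}_{g}(V)$. (Equivalently, this realizes your retraction $\psi$ directly as $X\mapsto X\cdot(\mathbf{1}+\tilde{O}_{g}(V))$, with no intermediate morphism $\Psi$ and no PBW computation.) This is the paper's argument, and it is where Theorem~\ref{thm6.3} --- in particular the nontrivial fact that $\bar{M}(U)(0)=U$ --- enters essentially.
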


\begin{proof}
Since $\mathbb{M}(0) = U(V[g])_0 / U(V[g])_0^{ - 1/T} \subseteq \tilde{\Omega}_0(\mathbb{M})$, for any $u \in \tilde{O}_{g}(V)$, it follows from Lemma~\ref{lem4.3} that
$
0 = o(u)(\mathbf{1}(0) + U(V[g])_0^{  - 1/T}) = u(0) + U(V[g])_{0}^{ - 1/T}.
$
Hence, $\varphi$ is well-defined.
Surjectivity of $\varphi $ follows from Lemma~\ref{lem8.3}. For injectivity, suppose $u \in V$ satisfies
$
u(0) \in U(V[g])_{0}^{ - 1/T}.
$
Then, by Remark~\ref{rmk8.2}(1), the operator $o(u)$ acts as zero on $\tilde{\Omega}_0(M)$ for every $g$-twisted $\phi$-coordinated $V$-module $M$. Consider the $(1/T)\N$-graded $g$-twisted $\phi$-coordinated $V$-module $\bar{M} (\tilde{A}_{g }(V))$ from Theorem \ref{thm6.3}, so $\bar{M} (\tilde{A}_{g }(V))(0) = \tilde{A}_{g }(V)\subseteq \tilde{\Omega}_0(\bar{M} (A_{g}(V)))$ by Proposition~\ref{prop4.5}\,(1), then we have
$$
0 = o(u)(\mathbf{1} + \tilde{O}_{g }(V)) = u \bullet_{g } \mathbf{1} + \tilde{O}_{g }(V) = u + \tilde{O}_{g }(V),
$$
which implies  $u \in \tilde{O}_{g }(V)$. Hence   $\varphi$ is injective.
Let $u, v \in V$, we compute
\begin{align*}
&\varphi \!\left( \bigl(u + \tilde{O}_{g }(V)\bigr) \bullet_{g }  \bigl(v + \tilde{O}_{g }(V)\bigr) \right)   \\=& \varphi \!\left( u \bullet_{g}  v + \tilde{O}_{g }(V) \right)  = \bigl(u \bullet_{g }  v\bigr)(0) + U(V[g])_{0}^{ - 1/T} \\
 =& u(0) v(0) + U(V[g])_{0}^{  - 1/T}  = \bigl(u(0) + U(V[g])_{0}^{  - 1/T}\bigr) \cdot \bigl(v(0) + U(V[g])_0^{ - 1/T}\bigr)  \\ =& \varphi \!\left(u + \tilde{O}_{g }(V)\right) \cdot \varphi \!\left(v + \tilde{O}_{g }(V)\right),
\end{align*}
 where the third equality follows from Lemma \ref{lem8.4}, which implies $\varphi $ is an algebra homomorphism.
This completes the proof.
\end{proof}

\section{When $V$ is a vertex operator algebra}

In this section, assuming that $V$ is a vertex operator algebra, we show that each $\tilde{A}_{g,n,m}(V)$ is isomorphic to the $A_{g,n}(V)$--$A_{g,m}(V)$-bimodule $A_{g,n,m}(V)$ constructed by Dong and Jiang~\cite{DJ2}. Furthermore, we establish a bijection between irreducible admissible $g$-twisted $V$-modules and irreducible $(1/T)\mathbb{N}$-graded $g$-twisted $\phi$-coordinated $V$-modules.

\begin{defi}
A vertex operator algebra is a quadruple $(V, Y, \mathbf{1}, \omega)$, where $V=\bigoplus_{n \in \mathbb{Z}} V_n$ is a $\mathbb{Z}$-graded vector space with $\operatorname{dim} V_n<\infty$ for all $n \in \mathbb{Z}, V_n=0$ for $n \ll 0, \mathbf{1} \in V_0$, $\omega \in V_2$ and $(V, Y, \mathbf{1})$ is a vertex algebra, such that 
the Virasoro algebra relations hold: $[L(m), L(n)]=(m-n) L(m+n)+\frac{m^3-m}{12} c_V$ for $m, n \in \mathbb{Z}$, where $c_V \in \mathbb{C}$ and $L(m)=\omega_{m+1}$ for $m \in \mathbb{Z} ;\left.L(0)\right|_{V_m}=m \mathrm{id}_{V_m}$ for $m \in \mathbb{Z}$ and $Y(L(-1) w, z)=\frac{d}{d z} Y(w, z)$ for $w \in V$
\end{defi}
For any $n \in \mathbb{Z}$, elements of $V_n$ are called \emph{homogeneous}, and for homogeneous $u \in V_n$, we define $\operatorname{wt} u := n$. Whenever $\operatorname{wt} u$ appears, $u$ is assumed to be homogeneous.

\begin{defi}
Let $(V, Y, \mathbf{1}, \omega)$ be a vertex operator algebra. A linear automorphism $g$ of $V$ is called an \emph{automorphism of $V$} if $g$ is a vertex algebra automorphism of $(V, Y, \mathbf{1})$ and satisfies $g(\omega) = \omega$.
\end{defi}

Fix an automorphism $g$ of $V$ of order $T$. Then $V$ decomposes into eigenspaces for $g$:
\[
V = \bigoplus_{r=0}^{T-1} V^r, \qquad 
V^r = \left\{ v \in V \mid g v = e^{-2\pi \sqrt{-1} r / T} v \right\}.
\]

\begin{defi}
A \emph{weak $g$-twisted $V$-module} is a vector space $M$ equipped with a linear map
\[
Y_M(\cdot, z): V \to (\operatorname{End} M)[[z^{1/T}, z^{-1/T}]],
\quad
u \mapsto Y_M(u, z) = \sum_{n \in r/T + \mathbb{Z}} u_n z^{-n-1}
\quad (u \in V^r),
\]
satisfying the following axioms:

{\rm(1)} $Y_M(\mathbf{1}, z) = \mathrm{id}_M$;
    
{\rm(2)} For $u \in V^r$ and $w \in M$, $u_{r/T + n} w = 0$ for all sufficiently large $n$;
    
{\rm(3)}  For $u \in V^r$, $v \in V^s$,
    \[
    \begin{aligned}
    &z_0^{-1} \delta\!\left( \frac{z_1 - z_2}{z_0} \right) Y_M(u, z_1) Y_M(v, z_2)
    - z_0^{-1} \delta\!\left( \frac{z_2 - z_1}{-z_0} \right) Y_M(v, z_2) Y_M(u, z_1) \\
    &\qquad = z_2^{-1} \left( \frac{z_1 - z_0}{z_2} \right)^{-r/T}
    \delta\!\left( \frac{z_1 - z_0}{z_2} \right) Y_M(Y(u, z_0)v, z_2).
    \end{aligned}
    \]

\end{defi}

\begin{defi}
An \emph{admissible $g$-twisted $V$-module} is a   $(1/T)\mathbb{N}$-graded weak $g$-twisted $V$-module $M = \bigoplus_{n \in (1/T)\mathbb{N}} M(n)$, such that for all  $v \in V$ and $m, n \in (1/T)\mathbb{Z}$,
\[
v_m M(n) \subseteq M\bigl(n + \operatorname{wt} v - m - 1\bigr).
\]
\end{defi}

For $u \in V^r$, $v \in V$, and $m, n, p \in (1/T)\mathbb{N}$, define a product $*_{g,m,p}^{\,n}$ on $V$ as follows:
\begin{align*}
u *_{g,m,p}^{\,n} v
&= \sum_{i=0}^{\lfloor p \rfloor} (-1)^i 
\binom{\lfloor m \rfloor + \lfloor n \rfloor - \lfloor p \rfloor - 1 + \delta_{\bar{m}}(r) + \delta_{\bar{n}}(T - r) + i}{i} \\
&\quad \cdot \Res_z \frac{(1+z)^{\operatorname{wt} u - 1 + \lfloor m \rfloor + \delta_{\bar{m}}(r) + r/T}}{z^{\lfloor m \rfloor + \lfloor n \rfloor - \lfloor p \rfloor + \delta_{\bar{m}}(r) + \delta_{\bar{n}}(T - r) + i}} Y(u, z) v,
\end{align*}
if $\bar{p} - \bar{n} \equiv r \pmod{T}$; otherwise, set $u *_{g,m,p}^{\,n} v = 0$.
Set
$
\bar{*}_{g,m}^{\,n} := *_{g,m,n}^{\,n}$ and
$*_{g,m}^{\,n} := *_{g,m,m}^{\,n}.
$
Note that when $g = 1$, the product $*_{1,m,p}^{\,n}$ coincides with $*_{m,p}^{\,n}$ defined in~\cite{DJ1}. Moreover, $*_{g,n,n}^{\,n}$ is precisely the product $*_{g,n}$ introduced in~\cite{DLM3}; in particular, $*_{g,0}$ is the product $*_g$ from~\cite{DLM2}.

For $m, n \in (1/T)\mathbb{N}$, define
\[
O'_{g,n,m}(V) := \operatorname{span}\{ u \circ_{g,m}^{\,n} v \mid u, v \in V \} + L_{n,m}(V),
\]
where
\[
L_{n,m}(V) := \operatorname{span}\{ (L(-1) + L(0) + m - n) u \mid u \in V \},
\]
and for $u \in V^r$, $v \in V$,
\[
u \circ_{g,m}^{\,n} v
:= \Res_z \frac{(1+z)^{\operatorname{wt} u - 1 + \lfloor m \rfloor + \delta_{\bar{m}}(r) + r/T}}{z^{\lfloor m \rfloor + \lfloor n \rfloor + \delta_{\bar{m}}(r) + \delta_{\bar{n}}(T - r) + 1}} Y(u, z) v.
\]
When $m = n$, this reduces to the operation $u \circ_{g,n} v$ defined in~\cite{DLM3}. Set
$
O_{g,n}(V) :=O'_{g,n,n}(V) $, $ O_g(V) := O_{g,0}(V),
$
and  
\[
A_{g,n}(V) := V / O_{g,n}(V), \quad A_g(V) := A_{g,0}(V).
\]
The following result is due to Dong, Li, and Mason~\cite[Theorems~2.4 and~4.3]{DLM3}.

\begin{thm} \label{thm9.5}
{\rm(1)} The product $*_{g,n}$ induces an associative algebra structure on $A_{g,n}(V)$ with identity $\mathbf{1} + O_{g,n}(V)$.
    
{\rm(2)} There is a bijection between irreducible $A_{g,n}(V)$-modules that do not factor through $A_{g,n - 1/T}(V)$ and irreducible admissible $g$-twisted $V$-modules. In particular, there is a bijection between irreducible $A_{g}(V)$-modules  and irreducible admissible $g$-twisted $V$-modules.

\end{thm}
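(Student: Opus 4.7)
My plan is to treat the two parts of Theorem~\ref{thm9.5} separately, following the original strategy of Dong, Li, and Mason in \cite{DLM3}, while leveraging the technical framework already developed in this paper.

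For part (1), the associative algebra structure on $A_{g,n}(V)$ would be established by a three-step argument closely parallel to the proofs of Theorem~\ref{thm:associative_algebra_structure} and Lemma~\ref{lem3.5}. First, I would show that $O_{g,n}(V)$ is a two-sided ideal with respect to $*_{g,n}$, that is, $V *_{g,n} O_{g,n}(V) \subseteq O_{g,n}(V)$ and $O_{g,n}(V) *_{g,n} V \subseteq O_{g,n}(V)$. Via the skew-symmetry relation $Y(u,z)v \equiv e^{z(n-n)} Y(v,-z)u$ modulo $L_{n,n}(V)$ (more precisely, the variant adapted to the $L(0)+L(-1)$-shift), this reduces to analyzing expressions of the form $u *_{g,n}(v \circ_{g,n} w)$ and $(v \circ_{g,n} w) *_{g,n} u$, which are rewritten using the ordinary twisted Jacobi identity to place them in the span of $\circ_{g,n}$-elements together with $L_{n,n}(V)$. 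Second, associativity $(u *_{g,n} v) *_{g,n} w \equiv u *_{g,n}(v *_{g,n} w)\pmod{O_{g,n}(V)}$ is obtained by a binomial-sum manipulation entirely analogous to the one in Lemma~\ref{lem3.7}. Third, the identity property $\mathbf{1} *_{g,n} v \equiv v \equiv v *_{g,n} \mathbf{1} \pmod{O_{g,n}(V)}$ follows directly from $Y(\mathbf{1},z)=\operatorname{id}_V$ and a skew-symmetry computation as in Corollary~\ref{cor3.4}.

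For part (2), I would construct inverse functors. In one direction, given an irreducible admissible $g$-twisted $V$-module $M=\bigoplus_{k\in (1/T)\mathbb{N}} M(k)$ with $M(0)\neq 0$, I would define
\[
\Omega_n(M)=\{w\in M\mid v_{\,\wt v-1+k}w=0\text{ for all homogeneous }v\in V\text{ and all }k>n\}.
\]
A grading argument in the spirit of Proposition~\ref{prop4.5} shows $\Omega_n(M)=\bigoplus_{k\leq n} M(k)$, that $v\cdot w:=v_{\wt v-1}w$ gives each $M(m)$ the structure of an $A_{g,n}(V)$-module, that $M(n)$ is irreducible, and that it does not factor through $A_{g,n-1/T}(V)$ iff $M(n)\neq 0$. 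In the reverse direction, given an irreducible $A_{g,n}(V)$-module $U$ not factoring through $A_{g,n-1/T}(V)$, I would build a Verma-type admissible $g$-twisted module $\bar{M}_n(U)$ by parabolic induction from the Lie subalgebra $\bigoplus_{p\geq 0}\hat{V}[g]_{-p}$, closely following the scheme of Theorems~\ref{thm6.3} and~\ref{thm6.2}, but with the \emph{ordinary} twisted Jacobi identity (expansion of $z_2^{-1}((z_1-z_0)/z_2)^{-r/T}\delta((z_1-z_0)/z_2)$) replacing the $\phi$-coordinated Jacobi identity (expansion of $(x_2z)^{-1}\delta((x_1-x_2)/x_2z)$). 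The unique irreducible quotient $L_n(U)$ of $\bar{M}_n(U)$ then provides the inverse.

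The main obstacle is checking that this induced construction actually carries a $g$-twisted $V$-module structure, that is, that the twisted Jacobi identity genuinely holds on $L_n(U)$. This is a duality argument identical in spirit to Lemma~\ref{lem6.1}: one pairs the induced module against $U^\ast$ extended by zero to higher graded components, verifies that the twisted Jacobi relations hold on the bottom component using the $A_{g,n}(V)$-module structure of $U$, and then propagates the relation throughout the induced module by induction on the $U(\hat{V}[g])$-filtration and on the monomial length of $U(\hat{V}[g])$-words. Matching $\bar{M}_n(\Omega_n(M))\twoheadrightarrow M$ when $M$ is irreducible with $M(n)\neq 0$, and $\Omega_n(L_n(U))\cong U$, then yields the bijection; the hypothesis of not factoring through $A_{g,n-1/T}(V)$ precisely ensures $U$ is realized as the lowest piece rather than collapsing to a smaller degree.
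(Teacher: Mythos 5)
The paper does not prove Theorem~\ref{thm9.5} at all: it is quoted verbatim from Dong--Li--Mason \cite[Theorems~2.4 and~4.3]{DLM3}, so there is no internal proof to compare against. Your reconstruction does follow the architecture of the original DLM argument --- ideal property of $O_{g,n}(V)$, associativity, and unit for part (1); the functor $\Omega_n$ and a Verma-type induced module with a duality/pairing argument for part (2) --- which is also the template this paper imitates in Sections~4--6 for the $\phi$-coordinated analogue. Note that part (1) could alternatively be obtained from the paper's own machinery: by Lemma~\ref{lem9.8}, $A_{g,n}(V,\omega)=\tilde{A}_{g,n}(\exp(V,\omega))$, and Theorem~\ref{thm:associative_algebra_structure} applied to the vertex algebra $\exp(V,\omega)$ already yields associativity; the paper simply chooses to cite DLM instead.

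Two concrete issues with your outline for part (2). First, the parabolic induction cannot be performed from ``$\bigoplus_{p\geq 0}\hat{V}[g]_{-p}$'' with the grading of Section~5, where $\deg(a\otimes t^m)=-m$; that grading is tailored to $\phi$-coordinated modules, for which $v_mW(n)\subseteq W(n-m-1)$ with no weight shift. For admissible $g$-twisted modules one must regrade by $\deg(a\otimes t^m)=\wt a-m-1$, so that the degree-zero part is spanned by the zero modes $a_{\wt a-1}$ and the analogue of Lemma~\ref{lem5.3} surjects onto $A_{g,n}(V)_{\mathrm{Lie}}$; as written your induction is over the wrong subalgebra. Second, the assertion that $M(n)$ ``does not factor through $A_{g,n-1/T}(V)$ iff $M(n)\neq 0$'' is precisely one of the delicate claims of DLM's proof, not a routine grading observation: the non-factoring hypothesis is what guarantees that the induced module $\bar{M}_n(U)$ has nonzero degree-zero component and that the two functors are mutually inverse, and your plan defers this (together with the verification of the twisted Jacobi identity on the induced module) entirely to analogy. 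As a proof plan the route is sound; as a proof, the load-bearing verifications remain to be done.
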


Let $O''_{g,n,m}(V)$ be the linear span of all elements of the form
\[
u *_{g,m,p_3}^{\,n} \Bigl( (a *_{g,p_1,p_2}^{\,p_3} b) *_{g,m,p_1}^{\,p_3} c - a *_{g,m,p_2}^{\,p_3} (b *_{g,m,p_1}^{\,p_2} c) \Bigr),
\]
for $a,b,c,u \in V$ and $p_1, p_2, p_3 \in (1/T)\mathbb{N}$. Define
\[
O'''_{g,n,m}(V) := \sum_{p_1, p_2 \in (1/T)\mathbb{N}} \left( V *_{g,p_1,p_2}^{\,n} O'_{g,p_2,p_1}(V) \right) *_{g,m,p_1}^{\,n} V,
\]
and set
\[
O_{g,n,m}(V) := O'_{g,n,m}(V) + O''_{g,n,m}(V) + O'''_{g,n,m}(V), \qquad
A_{g,n,m}(V) := V / O_{g,n,m}(V).
\]

\begin{thm}[\cite{DJ2}] \label{T2.3}
Let $V$ be a vertex operator algebra and $m, n \in (1/T)\mathbb{N}$. Then $A_{g,n,m}(V)$ is an $A_{g,n}(V)$--$A_{g,m}(V)$-bimodule, where the left and right actions are induced by $\bar{*}_{g,m}^{\,n}$ and $*_{g,m}^{\,n}$, respectively.
\end{thm}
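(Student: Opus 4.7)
The plan is to prove Theorem~\ref{T2.3} in parallel with Theorem~\ref{thm3.8} from Section~3, replacing the vertex-algebra derivation $\mathcal{D}$ by the conformal pair $(L(-1),L(0))$ and accounting for the weight-shift factor $(1+z)^{\operatorname{wt} u}$ that distinguishes $*_{g,m,p}^{\,n}$ from $\bullet_{g,m,p}^{\,n}$. The change-of-variable $y=e^x-1$ that silently identifies $\bullet$ with $*$ in Section~3 is not directly available here, so each residue manipulation must be redone in the variable $z$; the logical skeleton of the argument, however, carries over unchanged, and reduces to four closure/compatibility statements plus a unit check.

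First I would prove the VOA counterpart of Lemma~\ref{prop1}: for $u\in V^r$, $v\in V$ and integers $k\ge s\ge 0$,
\[
\operatorname{Res}_z\frac{(1+z)^{\operatorname{wt} u-1+\lfloor m\rfloor+\delta_{\bar m}(r)+r/T+s}}{z^{\lfloor m\rfloor+\lfloor n\rfloor+\delta_{\bar m}(r)+\delta_{\bar n}(T-r)+1+k}}Y(u,z)v\in O'_{g,n,m}(V),
\]
by induction on $k$, using $Y(L(-1)u,z)=\tfrac{d}{dz}Y(u,z)$ and the relation $(L(-1)+L(0)+m-n)u\in L_{n,m}(V)$ in the inductive step. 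Next I would establish the VOA analogue of Lemma~\ref{lem3.3}: the skew-symmetry
\[
Y(u,z)v\equiv(1+z)^{m-n-\operatorname{wt} u-\operatorname{wt} v}Y\!\bigl(v,-z(1+z)^{-1}\bigr)u\pmod{L_{n,m}(V)},
\]
derived from $e^{zL(-1)}Y(v,-z)u=Y(u,z)v$ together with the $L(0)$-eigenvalue bookkeeping and the identity $(L(-1)+L(0)+m-n)u\in L_{n,m}(V)$; from this one deduces $V^r\subseteq O'_{g,n,m}(V)$ whenever $r\not\equiv\bar m-\bar n\pmod T$. These two tools drive the closure statements $V\bar*_{g,m}^{\,n}O'_{g,n,m}(V),\;O'_{g,n,m}(V)*_{g,m}^{\,n}V\subseteq O'_{g,n,m}(V)$ through the same three-delta-function expansion used in Lemma~\ref{lem3.5}. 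Closure on the full $O_{g,n,m}(V)$ follows from the definitions of $O''_{g,n,m}(V)$ and $O'''_{g,n,m}(V)$ exactly as in Lemma~\ref{lem3.7}; associativity of each one-sided action is then automatic from the definition of $O''_{g,n,m}(V)$; commutativity of the left and right actions, $(a\bar*_{g,m}^{\,n}b)*_{g,m}^{\,n}c\equiv a\bar*_{g,m}^{\,n}(b*_{g,m}^{\,n}c)\pmod{O_{g,n,m}(V)}$, is handled by the iterated delta-function calculation at the end of Lemma~\ref{lem3.5}; and $\mathbf{1}+O_{g,n,m}(V)$ acts as the identity by the VOA analogue of Corollary~\ref{cor3.4}(2).

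The main obstacle is the combinatorial bookkeeping around the residue conditions $\bar p-\bar n\equiv r$, $\bar m-\bar p\equiv s\pmod T$ and the integer shifts $\delta_{\bar m}(\cdot)+\delta_{\bar n}(T-\cdot)\in\{0,1\}$: one must show that the binomial coefficients produced by repeated delta-function substitutions cancel in pairs, which requires a case split by the relative sizes of $\bar m,\bar n,\bar p$ and appeals to Vandermonde-type identities of the same kind as the one used in Lemma~\ref{lem4.1}. A VOA-specific complication is that the weight factor $(1+z)^{\operatorname{wt} u}$ couples nontrivially with the Jacobi identity, so identities that collapse in Section~3 after the substitution $y=e^x-1$ here require an additional integration by parts using $\tfrac{d}{dz}(1+z)^{\operatorname{wt} u}=\operatorname{wt} u\,(1+z)^{\operatorname{wt} u-1}$. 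These are combinatorial rather than conceptual difficulties; the conceptual backbone of the proof is identical to that of Theorem~\ref{thm3.8}, and in particular the associativity defect $a*_{g,p_1,p_2}^{\,p_3}b$-type generators are built into $O''_{g,n,m}(V)$ precisely to absorb the nonassociativity of $*_{g,m,p}^{\,n}$ at the cochain level.
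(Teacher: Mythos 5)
Your outline is essentially sound and would work, but it is worth noting that the paper does not prove Theorem~\ref{T2.3} at all: it is quoted from~\cite{DJ2}, and what you have reconstructed is, in effect, the original Dong--Jiang argument (a direct residue computation in the variable $z$ carrying the weight factor $(1+z)^{\operatorname{wt} u}$ throughout). Within this paper there is a substantially shorter route that you should compare against. Lemma~\ref{lem9.8} shows that under the substitution $z=e^{y}-1$ one has $Y\bigl((1+z)^{L(0)}u,z\bigr)=Y(e^{yL(0)}u,e^{y}-1)=Y[u,y]$, so the products $*_{g,m,p}^{\,n}$ and $\circ_{g,m}^{\,n}$ on $(V,\omega)$ become \emph{literally} the products $\bullet_{g,m,p}^{\,n}$ and $\diamond_{g,m}^{\,n}$ of the vertex algebra $\exp(V,\omega)$, while $L[-1]=L(-1)+L(0)$ identifies the two versions of $L_{n,m}(V)$; hence $O_{g,n,m}(V,\omega)=\tilde{O}_{g,n,m}(\exp(V,\omega))$ and Theorem~\ref{T2.3} is an immediate corollary of Theorem~\ref{thm3.8} applied to $\exp(V,\omega)$. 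In particular, the ``VOA-specific complication'' you identify --- the coupling of $(1+z)^{\operatorname{wt} u}$ with the Jacobi identity, requiring extra integrations by parts --- disappears entirely once the weight factor is absorbed into $e^{yL(0)}$; your proposed re-derivation of the analogues of Lemma~\ref{prop1}, Lemma~\ref{lem3.3}, Lemma~\ref{lem3.5} and Lemma~\ref{lem3.7} duplicates work that the change of variable does for free. What your direct approach buys is independence from the second vertex-algebra structure $\exp(V,\omega)$ and from Theorem~\ref{thm9.9}; what the paper's implicit route buys is that all delta-function and binomial cancellations need to be verified only once, in the exponential variable. One small caution on your sketch: the skew-symmetry congruence modulo $L_{n,m}(V)$ is delicate because $L(-1)$ and $L(0)$ do not commute, so $e^{zL(-1)}$ cannot simply be replaced by $e^{-z(L(0)+m-n)}$; the correct derivation (as in~\cite{DJ1}) proceeds by the conjugation formula for $e^{zL(-1)}$ acting on $Y(v,-z)u$ rather than by naive substitution, and you should make that step explicit.
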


For a vertex operator algebra $(V, \omega)$ of central charge $c$, there is a second vertex operator algebra structure on the same underlying vector space, denoted $\exp(V, \omega)$, with vertex operators defined by
\[
Y[u, z] := Y\bigl(e^{z L(0)} u, e^z - 1\bigr), \quad u \in V,
\]
vacuum vector $\mathbf{1}$, and new conformal vector $\tilde{\omega} = \omega - \frac{c}{24} \mathbf{1}$ (see~\cite{Z1}). Denote the corresponding Virasoro operators by
$
Y[\tilde{\omega}, z] = \sum_{n \in \mathbb{Z}} L[n] z^{-n-2}.
$

\begin{lem} \label{lem9.8}
Let $(V, \omega)$ be a vertex operator algebra, $g$ an automorphism of order $T$, and $n, m \in (1/T)\mathbb{N}$. Then
\[
A_{g,n}(V, \omega) = \tilde{A}_{g,n}\bigl(\exp(V, \omega)\bigr),
\qquad
A_{g,n,m}(V, \omega) = \tilde{A}_{g,n,m}\bigl(\exp(V, \omega)\bigr).
\]
\end{lem}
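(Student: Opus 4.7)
The strategy is to establish the stronger claim that the defining subspaces themselves coincide: $\tilde{O}_{g,n,m}(\exp(V,\omega)) = O_{g,n,m}(V,\omega)$ as subsets of $V$, from which both assertions in the lemma follow at once by quotienting (the algebra statement is the case $m=n$, since $\tilde{O}_{g,n} = \tilde{O}'_{g,n,n}$ and $O_{g,n} = O'_{g,n,n}$). The bimodule and algebra structures then match automatically because in both constructions the left and right actions are induced by the very products whose equality we verify.

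All of this reduces to two standard facts about the second vertex operator algebra structure $\exp(V,\omega)$. First, for homogeneous $u \in V$ we have
\begin{equation*}
Y[u,z] \;=\; Y\bigl(e^{zL(0)}u,\, e^z - 1\bigr) \;=\; e^{z\operatorname{wt} u}\, Y(u,\, e^z - 1),
\end{equation*}
so after the substitution $z = \log(1+y)$,
\begin{equation*}
Y[u,\log(1+y)] \;=\; (1+y)^{\operatorname{wt} u}\, Y(u,y).
\end{equation*}
Second, the translation element $L[-1]$ of $\exp(V,\omega)$ satisfies $L[-1] = L(-1) + L(0)$. Since the $\mathcal{D}$-operator of any vertex algebra is intrinsically determined by its vertex map, the $\mathcal{D}$-operator of $\exp(V,\omega)$ is $L[-1]$, and therefore
\begin{equation*}
(\mathcal{D}^{\exp(V,\omega)} + m - n)u \;=\; (L(-1) + L(0) + m - n)u,
\end{equation*}
so the subspaces $L_{n,m}(V)$ appearing in the definitions of $\tilde{O}'_{g,n,m}(\exp(V,\omega))$ and $O'_{g,n,m}(V,\omega)$ coincide.

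Substituting the first identity into the defining integrand of each of $\bullet_{g,m,p}^{\,n}$, $\bar{\bullet}_{g,m}^{\,n}$, $\bullet_{g,m}^{\,n}$, and $\diamond_{g,m}^{\,n}$ computed on $\exp(V,\omega)$ replaces $Y[u,\log(1+y)]$ by $(1+y)^{\operatorname{wt} u}Y(u,y)$, so the combined exponent of $(1+y)$ becomes $\operatorname{wt} u - 1 + \lfloor m\rfloor + \delta_{\bar{m}}(r) + r/T$, which is exactly the exponent in the corresponding Dong--Jiang products $*_{g,m,p}^{\,n}$, $\bar{*}_{g,m}^{\,n}$, $*_{g,m}^{\,n}$, and $\circ_{g,m}^{\,n}$ on $(V,\omega)$; binomial coefficients, denominator powers, and congruence conditions on $\bar{m},\bar{n},\bar{p},r$ match verbatim. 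Combined with the equality of the $L_{n,m}(V)$ subspaces, this yields $\tilde{O}'_{g,n,m}(\exp(V,\omega)) = O'_{g,n,m}(V,\omega)$, and since $\tilde{O}''_{g,n,m}$ and $\tilde{O}'''_{g,n,m}$ are built entirely from these products and from $\tilde{O}'$, they match $O''_{g,n,m}$ and $O'''_{g,n,m}$, respectively. The only non-routine point, and thus the main obstacle such as it is, is recognising in advance that the single discrepancy between the $\bullet$-family and the $*$-family is the factor $(1+y)^{\operatorname{wt} u}$, which is precisely what the substitution $Y[u,\log(1+y)] = (1+y)^{\operatorname{wt} u}Y(u,y)$ supplies; once this observation is in hand, the proof is a direct line-by-line comparison of definitions, with no convergence or combinatorial identities left to verify.
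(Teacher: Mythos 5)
Your proposal is correct and follows essentially the same route as the paper's proof: the identity $Y[u,\log(1+y)]=(1+y)^{\operatorname{wt}u}Y(u,y)$ is exactly the change of variables $z=e^{y}-1$ that the paper performs on the Dong--Jiang products, and the paper likewise invokes $L[-1]=L(-1)+L(0)$ to match the $L_{n,m}(V)$ subspaces. The remaining observations (that $g$ is an automorphism of $\exp(V,\omega)$ and that $\tilde{O}''$, $\tilde{O}'''$ are built from the already-matched products) are the same in both arguments.
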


\begin{proof}
Since $g(\omega) = \omega$, it follows that $g$ also preserves $\tilde{\omega}$, hence $g$ is an automorphism of $\exp(V, \omega)$.
For $u \in V^r$, $v \in V$, and $m_1, n_1, p \in (1/T)\mathbb{N}$ with $\bar{p} - \bar{n}_1 \equiv r \pmod{T}$, we compute:
\begin{align*}
    u *_{g, m_1, p}^{n_1} v=&\sum_{i=0}^{\lfloor p\rfloor}(-1)^{i}
    \binom{\lfloor m_1\rfloor+\lfloor n_1\rfloor-\lfloor p\rfloor-1+\delta_{\bar{m}_1}(r)+\delta_{\bar{n}_1}(T-r)+i}{i} \\
&\cdot \operatorname{Res}_{z} \frac{(1+z)^{-1+\lfloor m_1\rfloor+\delta_{\bar{m}_1}(r)+r / T}}{z^{\lfloor m_1\rfloor+\lfloor n_1\rfloor-\lfloor p\rfloor+\delta_{\bar{m}_1}(r)+\delta_{\bar{n}_1}(T-r)+i}}
Y((1+z)^{L(0)}u, z) v\\
=&\sum_{i=0}^{\lfloor p\rfloor}(-1)^{i}
    \binom{\lfloor m_1\rfloor+\lfloor n_1\rfloor-\lfloor p\rfloor-1+\delta_{\bar{m}_1}(r)+\delta_{\bar{n}_1}(T-r)+i}{i} \\
&\cdot \operatorname{Res}_{y} \frac{e^{(\lfloor m_1\rfloor+\delta_{\bar{m}_1}(r)+r / T)y}}{(e^y-1)^{\lfloor m_1\rfloor+\lfloor n_1\rfloor-\lfloor p\rfloor+\delta_{\bar{m}_1}(r)+\delta_{\bar{n}_1}(T-r)+i}}
Y(e^{yL(0)}u, e^y-1) v\\
=&\sum_{i=0}^{\lfloor p\rfloor}(-1)^{i}
    \binom{\lfloor m_1\rfloor+\lfloor n_1\rfloor-\lfloor p\rfloor-1+\delta_{\bar{m}_1}(r)+\delta_{\bar{n}_1}(T-r)+i}{i} \\
&\cdot \operatorname{Res}_{y} \frac{e^{(\lfloor m_1\rfloor+\delta_{\bar{m}_1}(r)+r / T)y}}{(e^y-1)^{\lfloor m_1\rfloor+\lfloor n_1\rfloor-\lfloor p\rfloor+\delta_{\bar{m}_1}(r)+\delta_{\bar{n}_1}(T-r)+i}}
Y[u,y] v
\end{align*}
where we used the change of variables $z = e^y - 1$. A similar computation shows
\[
u \circ_{g,m}^{\,n} v = \Res_y \frac{e^{(\lfloor m \rfloor + \delta_{\bar{m}}(r) + r/T) y}}{(e^y - 1)^{\lfloor m \rfloor + \lfloor n \rfloor + \delta_{\bar{m}}(r) + \delta_{\bar{n}}(T - r) + 1}} Y[u, y] v.
\]
Moreover, by~\cite[Theorem~4.2.1]{Z1}, we have $L[-1] = L(-1) + L(0)$. Comparing the definitions of $A_{g,n}(V, \omega)$, $\tilde{A}_{g,n}(\exp(V, \omega))$, etc., we conclude that the two constructions coincide. This completes the proof.
\end{proof}

\begin{thm} \label{thm9.9}
Let $(V, \omega)$ be a vertex operator algebra with an automorphism $g$ of finite order $T$, and let $n, m \in (1/T)\mathbb{N}$. Then:

{\rm(1)} The associative algebras $\tilde{A}_{g,n}(V, \omega)$ and $A_{g,n}(V, \omega)$ are isomorphic;

{\rm(2)}    The bimodules $\tilde{A}_{g,n,m}(V, \omega)$ and $A_{g,n,m}(V, \omega)$ are isomorphic.
 
\end{thm}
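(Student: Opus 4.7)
The strategy is to reduce to Lemma~\ref{lem9.8}, which provides the identifications
$$A_{g,n}(V,\omega)=\tilde{A}_{g,n}(\exp(V,\omega))\quad\text{and}\quad A_{g,n,m}(V,\omega)=\tilde{A}_{g,n,m}(\exp(V,\omega))$$
as associative algebras and bimodules on the underlying space $V$. Both parts of the theorem then reduce to comparing the author's $\tilde{A}$-constructions applied to the two (in general distinct) vertex algebra structures $(V,Y)$ and $(V,Y[\cdot\,])$ on the same underlying space.

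Part~(1) is already established by the author in \cite{Shun1}, where the isomorphism $\tilde{A}_{g,n}(V,\omega)\cong A_{g,n}(V,\omega)$ is proved for an arbitrary vertex operator algebra $V$; combined with Lemma~\ref{lem9.8} this yields (1) at once. For part~(2) the plan is to upgrade that argument to the bimodule setting. The central computation is the comparison of the single-level products
$$u\bullet_{g,m,p}^{\,n}v\quad\text{and}\quad u*_{g,m,p}^{\,n}v$$
for $u\in V^r$, $v\in V$, and $m,n,p\in(1/T)\mathbb{N}$. Via the substitution $z=e^y-1$ in the defining residue integrals, together with the identities $Y[u,y]=Y(e^{yL(0)}u,e^y-1)$ and $L[-1]=L(-1)+L(0)$, one reconciles the weight-shifted exponent $\operatorname{wt}u-1+\lfloor m\rfloor+\delta_{\bar{m}}(r)+r/T$ appearing in the Dong--Jiang formula with the weightless exponent $-1+\lfloor m\rfloor+\delta_{\bar{m}}(r)+r/T$ appearing in the $\tilde{A}$ formula, and simultaneously matches the linearized translation subspaces $L_{n,m}(V)=(L(-1)+m-n)V$ (from the $\tilde{A}$ side) with $(L(-1)+L(0)+m-n)V$ (from the $A$ side). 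The output is a linear bijection $\Phi:V/\tilde{O}_{g,n,m}(V,\omega)\to V/O_{g,n,m}(V,\omega)$ that intertwines the left action $\bar{\bullet}_{g,m}^{\,n}$ with $\bar{*}_{g,m}^{\,n}$ and the right action $\bullet_{g,m}^{\,n}$ with $*_{g,m}^{\,n}$.

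The main obstacle is showing that the nested summands $\tilde{O}_{g,n,m}^{\prime\prime}(V)$ and $\tilde{O}_{g,n,m}^{\prime\prime\prime}(V)$ on one side correspond under $\Phi$ to $O_{g,n,m}^{\prime\prime}(V)$ and $O_{g,n,m}^{\prime\prime\prime}(V)$ on the other; these pieces are absent in the algebra case $n=m$ treated in \cite{Shun1} and encode iterated mixed-associativity relations at several auxiliary levels $p_1,p_2,p_3\in(1/T)\mathbb{N}$, so the single-level change-of-variables argument has to be threaded through each nested layer. Once this matching is in hand, the bimodule axioms provided by Theorems~\ref{thm3.8} and~\ref{T2.3}, together with Lemma~\ref{lem3.7} and its Dong--Jiang counterpart, force $\Phi$ to descend to a genuine bimodule isomorphism; combining with Lemma~\ref{lem9.8} completes the proof.
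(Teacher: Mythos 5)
Your reduction is the right one: by Lemma~\ref{lem9.8} everything comes down to comparing the $\tilde{A}$-constructions for the two vertex algebra structures $(V,Y)$ and $\exp(V,\omega)=(V,Y[\cdot\,])$ on the same space. But the method you propose for that comparison does not close the gap. The change of variables $z=e^y-1$ together with $Y[u,y]=Y(e^{yL(0)}u,e^y-1)$ and $L[-1]=L(-1)+L(0)$ is precisely the computation that \emph{proves} Lemma~\ref{lem9.8}: it identifies $u*_{g,m,p}^{\,n}v$ with $u\bullet_{g,m,p}^{\,n}v$ computed in the square-bracket structure, and $L(-1)+L(0)+m-n$ with $\mathcal{D}+m-n$ for $\exp(V,\omega)$. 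It says nothing about $\tilde{O}_{g,n,m}(V,Y)$, whose translation part is $(L(-1)+m-n)V$, a genuinely different subspace of $V$. So the linear bijection $\Phi$ you invoke is never constructed; the identity map does not work, and re-running the residue substitution "through each nested layer" only re-derives Lemma~\ref{lem9.8} again.

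The missing idea is the explicit vertex algebra isomorphism between the two structures. With $B_j$ defined by $\log(1+y)=\bigl(\exp\bigl(\sum_{j\ge 1}B_j y^{j+1}\tfrac{d}{dy}\bigr)\bigr)y$ and $L_+(B)=\sum_{j\ge 1}B_jL(j)$, Huang's change-of-variable formula gives $Y[u,z]=e^{-L_+(B)}Y(e^{L_+(B)}u,z)e^{L_+(B)}$, so $e^{L_+(B)}\colon (V,Y)\to(V,Y[\cdot\,])$ is an isomorphism of vertex algebras, and it commutes with $g$ because $g\omega=\omega$. Since the entire construction $\tilde{A}_{g,n,m}$ (including the nested summands $\tilde{O}''$ and $\tilde{O}'''$, which are built functorially from the products $\bullet_{g,m,p}^{\,n}$ and hence from $Y$ alone) depends only on the pair (vertex algebra, automorphism), this isomorphism induces $\tilde{A}_{g,n,m}(V,\omega)\cong\tilde{A}_{g,n,m}(\exp(V,\omega))$ for all $n,m$ at once; combined with Lemma~\ref{lem9.8} this finishes both parts. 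This also dissolves what you call the ``main obstacle'': no layer-by-layer matching of $\tilde{O}''$, $\tilde{O}'''$ with $O''$, $O'''$ is needed, because on one side the correspondence is induced by a vertex algebra isomorphism and on the other side Lemma~\ref{lem9.8} is a term-by-term equality of the defining products.
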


\begin{proof}
Let $B_j$ ($j \in \mathbb{N}$) be the rational numbers defined by
\[
\log(1 + y) = \left( \exp\left( \sum_{j=1}^\infty B_j y^{j+1} \frac{d}{dy} \right) \right) y.
\]
Set $L_+(B) := \sum_{j=1}^\infty B_j L(j)$. By the change-of-variable formula in~\cite{H1},
\[
Y[u, z] = e^{-L_+(B)} Y\bigl(e^{L_+(B)} u, z\bigr) e^{L_+(B)},
\]
which implies that the vertex algebras $(V, Y, \mathbf{1}, \omega)$ and $\exp(V, \omega)$ are isomorphic via the map $e^{L_+(B)}$. Consequently,
\[
\tilde{A}_{g,n}(V, \omega) \cong \tilde{A}_{g,n}\bigl(\exp(V, \omega)\bigr),
\quad
\tilde{A}_{g,n,m}(V, \omega) \cong \tilde{A}_{g,n,m}\bigl(\exp(V, \omega)\bigr).
\]
By Lemma~\ref{lem9.8}, the right-hand sides equal $A_{g,n}(V, \omega)$ and $A_{g,n,m}(V, \omega)$, respectively. Hence the desired isomorphisms follow.
\end{proof}

\begin{cor}
Let $(V, \omega)$ and $(V, \omega')$ be two vertex operator algebra structures on the same underlying vertex algebra $V$, and let $g$ be an automorphism of both structures of order $T$. Then for any $n, m \in (1/T)\mathbb{N}$, the algebras $A_{g,n}(V, \omega)$ and $A_{g,n}(V, \omega')$ are isomorphic, and the bimodules $A_{g,n,m}(V, \omega)$ and $A_{g,n,m}(V, \omega')$ are isomorphic.
\end{cor}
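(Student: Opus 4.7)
The plan is to derive the corollary as an immediate consequence of Theorem~\ref{thm9.9}. The crucial observation is that the algebra $\tilde{A}_{g,n}(V)$ and the bimodule $\tilde{A}_{g,n,m}(V)$ are constructed purely from the data of the underlying vertex algebra $(V,Y,\mathbf{1})$ together with the automorphism $g$: inspection of the definitions in Section~3 shows that the products $\bullet_{g,m,p}^{\,n}$, $\diamond_{g,m}^{\,n}$ and the subspaces $L_{n,m}(V)$, $\tilde{O}_{g,n,m}^{\prime}(V)$, $\tilde{O}_{g,n,m}^{\prime\prime}(V)$, $\tilde{O}_{g,n,m}^{\prime\prime\prime}(V)$ involve only $Y(\cdot,x)$, the operator $\mathcal{D}$, the shift $m-n$, and the eigenspace decomposition of $V$ under $g$ — none of these references a conformal vector.

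Consequently, the tilde constructions do not depend on the choice of conformal vector at all, so
\[
\tilde{A}_{g,n}(V,\omega) = \tilde{A}_{g,n}(V,\omega'), \qquad
\tilde{A}_{g,n,m}(V,\omega) = \tilde{A}_{g,n,m}(V,\omega'),
\]
as associative algebras and bimodules, respectively. I would state this explicitly at the beginning of the proof and justify it by pointing to the definitions.

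Now I invoke Theorem~\ref{thm9.9}, which furnishes, for each choice of conformal vector, isomorphisms
\[
A_{g,n}(V,\omega) \cong \tilde{A}_{g,n}(V,\omega), \qquad
A_{g,n}(V,\omega') \cong \tilde{A}_{g,n}(V,\omega'),
\]
and likewise for the bimodules. Composing the first isomorphism with the inverse of the second yields an algebra isomorphism $A_{g,n}(V,\omega) \cong A_{g,n}(V,\omega')$, and the analogous composition produces a bimodule isomorphism $A_{g,n,m}(V,\omega) \cong A_{g,n,m}(V,\omega')$ (which is automatically compatible with the algebra isomorphisms on the two sides, since both arise from the same underlying identification of tilde-objects).

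There is essentially no obstacle here: the entire content of the corollary has already been absorbed into Theorem~\ref{thm9.9}. The only subtle point to be careful about is verifying that no hidden dependence on $\omega$ enters through the Virasoro operator $L(-1)$; but in $\tilde{O}_{g,n,m}^{\prime}(V)$ the operator appearing is $\mathcal{D}$ rather than $L(-1)+L(0)$, and $\mathcal{D}$ is intrinsic to the vertex algebra. The shift by $L(0)$ in the $A_{g,n,m}(V)$ side is precisely what Lemma~\ref{lem9.8} and the change of variables $z=e^y-1$ in Theorem~\ref{thm9.9} absorb into the exponential factor, which is why the passage through the tilde-objects is exactly what decouples the construction from the conformal structure.
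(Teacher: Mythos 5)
Your proposal is correct and follows exactly the paper's argument: the tilde constructions depend only on the underlying vertex algebra $(V,Y,\mathbf{1})$ and $g$, so $\tilde{A}_{g,n}(V,\omega)=\tilde{A}_{g,n}(V,\omega')$ and $\tilde{A}_{g,n,m}(V,\omega)=\tilde{A}_{g,n,m}(V,\omega')$, and Theorem~\ref{thm9.9} applied to each conformal vector then yields the desired isomorphisms. Your extra remark about $\mathcal{D}$ versus $L(-1)+L(0)$ is a useful clarification but adds nothing beyond the paper's reasoning.
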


\begin{proof}
Since the two structures share the same vertex algebra $(V, Y, \mathbf{1})$, we have
\[
\tilde{A}_{g,n}(V, \omega) = \tilde{A}_{g,n}(V, \omega'), \quad
\tilde{A}_{g,n,m}(V, \omega) = \tilde{A}_{g,n,m}(V, \omega').
\]
Applying Theorem~\ref{thm9.9} to both conformal vectors $\omega$ and $\omega'$, we deduce that
\[
A_{g,n}(V, \omega) \cong \tilde{A}_{g,n}(V, \omega) = \tilde{A}_{g,n}(V, \omega') \cong A_{g,n}(V, \omega'),
\]
and similarly for the bimodules. This completes the proof.
\end{proof}

It is known from~\cite{Z3,XH1} that for vertex operator algebras,
$
O_{g,n,m}(V) = O'_{g,n,m}(V).
$
Combined with Theorem~\ref{thm9.9}, this yields:

\begin{cor}
Conjecture~\ref{conj7.8} holds for vertex operator algebras.
\end{cor}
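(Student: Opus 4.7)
The plan is to deduce the corollary from the VOA-level identity $O_{g,n,m}(V,\omega) = O'_{g,n,m}(V,\omega)$ of \cite{Z3,XH1}, by transporting it to the tilded objects through the isomorphism underlying Theorem~\ref{thm9.9}.

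The first step would be to sharpen Lemma~\ref{lem9.8} from an equality of quotient algebras/bimodules to an equality of the defining subspaces themselves. The change-of-variable computation in the proof of Lemma~\ref{lem9.8}, combined with the identity $L[-1] = L(-1) + L(0)$, shows directly that
\[
\tilde{O}_{g,n,m}(\exp(V,\omega)) = O_{g,n,m}(V,\omega), \qquad \tilde{O}'_{g,n,m}(\exp(V,\omega)) = O'_{g,n,m}(V,\omega),
\]
as subspaces of the common underlying vector space. Invoking the known VOA identity then immediately yields $\tilde{O}_{g,n,m}(\exp(V,\omega)) = \tilde{O}'_{g,n,m}(\exp(V,\omega))$.

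The second step would be to transport this equality back to $(V,Y,\mathbf{1})$ via the vertex algebra isomorphism $e^{L_+(B)} \colon V \to \exp(V,\omega)$ constructed in the proof of Theorem~\ref{thm9.9}. Since the products $\bullet_{g,m,p}^{\,n}$, $\diamond_{g,m}^{\,n}$ and the derivation $\mathcal{D}$ are defined solely in terms of $Y$ and $\mathbf{1}$, every vertex algebra isomorphism intertwines the constructions $\tilde{O}_{g,n,m}$ and $\tilde{O}'_{g,n,m}$. Applying $e^{-L_+(B)}$ to the equality above therefore produces $\tilde{O}_{g,n,m}(V) = \tilde{O}'_{g,n,m}(V)$, which is exactly Conjecture~\ref{conj7.8} for the vertex operator algebra $(V,\omega)$.

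The only point that is not immediate from cited results is the functoriality of $\tilde{O}_{g,n,m}$ and $\tilde{O}'_{g,n,m}$ under vertex algebra isomorphisms, but this is clear from the definitions. I foresee no genuine obstacle; once Lemma~\ref{lem9.8} is read at the level of defining subspaces (rather than of quotients), the corollary reduces to elementary bookkeeping. It is worth emphasizing that the conformal vector plays no role in $\tilde{O}_{g,n,m}$ or $\tilde{O}'_{g,n,m}$, so $e^{L_+(B)}$ needs to be tracked only as a vertex algebra isomorphism, not as a VOA isomorphism.
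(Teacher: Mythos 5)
Your proposal is correct and is essentially the paper's own argument: the paper simply cites the known VOA equality $O_{g,n,m}(V)=O'_{g,n,m}(V)$ from \cite{Z3,XH1} and ``combines'' it with Theorem~\ref{thm9.9}, and your two steps --- reading Lemma~\ref{lem9.8} at the level of the defining subspaces rather than of the quotients, then transporting through the vertex algebra isomorphism $e^{L_+(B)}$ (which commutes with $g$ because $g$ fixes $\omega$ and hence all $L(j)$) --- are exactly the bookkeeping that makes that combination work. No gap.
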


\begin{rmk}
For a vertex operator algebra $V$, the proof of equality $O_{g,n,m}(V) = O'_{g,n,m}(V)$ relies crucially on the twisted regular representation theory developed by Li and Sun~\cite{LS1}. Resolving Conjecture~\ref{conj7.8} for general vertex algebras would require an analogous theory. However, the existing framework is deeply tied to the conformal structure—particularly through contragredient modules—and thus does not readily generalize to arbitrary vertex algebras.
\end{rmk}

Combining Theorems~\ref{thm6.4},~\ref{thm9.5} and~~\ref{thm9.9}, we obtain:

\begin{thm}
There exists a bijection between irreducible admissible $g$-twisted $V$-modules and irreducible $(1/T)\mathbb{N}$-graded $g$-twisted $\phi$-coordinated $V$-modules.
\end{thm}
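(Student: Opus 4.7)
The plan is to chain together three correspondence theorems already established in the paper. First, Theorem~\ref{thm6.4} yields a bijection between isomorphism classes of simple $\tilde{A}_{g}(V)$-modules $U$ and isomorphism classes of irreducible $(1/T)\mathbb{N}$-graded $g$-twisted $\phi$-coordinated $V$-modules, realized concretely by the functors $\tilde{\Omega}_0$ (Section~4) and $\tilde{L}$ (Section~6): from $U$ one produces $\tilde{L}(U)$, and from a simple graded module $M$ one recovers its degree-zero component, which by Proposition~\ref{prop4.5} coincides with $\tilde{\Omega}_0(M)$. Second, Theorem~\ref{thm9.9}(1) with $n=0$ provides an algebra isomorphism $\tilde{A}_{g}(V)\cong A_{g}(V)$, induced by the change-of-variables map $e^{L_+(B)}$ that intertwines $(V,Y,\mathbf{1},\omega)$ with $\exp(V,\omega)$; this identification passes to a bijection between isomorphism classes of simple $\tilde{A}_{g}(V)$-modules and isomorphism classes of simple $A_{g}(V)$-modules. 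Third, Theorem~\ref{thm9.5}(2) of Dong--Li--Mason provides a bijection between simple $A_{g}(V)$-modules and irreducible admissible $g$-twisted $V$-modules. Composing these three bijections produces the desired correspondence.

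At the level of the final statement there is no genuine obstacle: an isomorphism of associative algebras automatically identifies their simple-module categories, and each of the two correspondence theorems being chained preserves irreducibility by construction. All the substantive work --- the construction of the Lie algebra $\hat{V}[g]$ and of the universal enveloping algebra $U(V[g])$, the construction of the induced module $\tilde{M}(U)$ together with its irreducible graded quotient $\tilde{L}(U)$, and the verification that the conformal change of variables $e^{L_+(B)}$ intertwines the two families of Zhu-type algebras --- has already been carried out in Sections~5--9. The proof of the final theorem is therefore a one-line formal composition, and we expect the written argument to consist essentially of a diagram displaying the three bijections together with a brief remark that the composite is the claimed map, so the only ``obstacle'' is bookkeeping: verifying that the $n=0$ case of Theorem~\ref{thm9.9}(1) genuinely intertwines the module categories (which is immediate) and that the Dong--Li--Mason correspondence sends a simple $A_g(V)$-module to an irreducible admissible $g$-twisted module whose image under $\tilde{\Omega}_0\circ(\,\cdot\,)$ recovers the given simple module (which follows from the uniqueness of $\tilde{L}$ established in Theorem~\ref{thm6.3}).
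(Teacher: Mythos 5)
Your proposal is correct and matches the paper exactly: the paper's entire argument is the one-line composition of Theorem~\ref{thm6.4}, Theorem~\ref{thm9.9}(1) with $n=0$, and Theorem~\ref{thm9.5}(2), precisely the three bijections you chain together. Your additional remarks on why the composition is legitimate are consistent with (and slightly more explicit than) what the paper writes.
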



\end{document}